\documentclass[11pt]{amsart}

\usepackage[T1]{fontenc}
\usepackage[utf8]{inputenc}

\usepackage[margin=1in]{geometry}

\usepackage{csquotes}
\usepackage[USenglish]{babel}

\usepackage{lmodern}
\usepackage{mathtools,amsfonts,amsthm,mathrsfs,amssymb}
\mathtoolsset{showonlyrefs}

\usepackage{textcomp}

\usepackage[
    backend=biber,
    style=numeric,
    sorting=nyt,
    sortcites=true,
    abbreviate=true,
    giveninits=true,
    useprefix=false,
    isbn=false,
    terseinits=true,
    maxbibnames=99
]{biblatex}
\DeclareFieldFormat{pages}{#1}
\usepackage[shortlabels]{enumitem}
\usepackage[dvipsnames]{xcolor}

\PassOptionsToPackage{
    pdfusetitle,
    colorlinks=true,
    pdfstartview=FitV,
    linkcolor=Blue,
    citecolor=Green,
    urlcolor=WildStrawberry,
    linktoc=page
}{hyperref}
\usepackage{bookmark}
\usepackage[capitalise,noabbrev]{cleveref}
\usepackage[all]{hypcap} 
\usepackage{microtype}

\newtheorem{theorem}{Theorem}
\newtheorem{lemma}[theorem]{Lemma}
\newtheorem{proposition}[theorem]{Proposition}

\theoremstyle{definition}

\newcommand{\bI}{\mathbf I}
\newcommand{\cI}{\mathcal I}
\newcommand{\EE}{\mathbb E}
\newcommand{\PP}{\mathbb P}
\newcommand{\NN}{\mathbb N}
\newcommand{\RR}{\mathbb R}
\newcommand{\RRpos}{\RR_{>0}}
\newcommand{\RRnn}{\RR_{\ge0}}
\newcommand{\eps}{\varepsilon}
\DeclareMathOperator{\mad}{mad}
\newcommand*{\dkl}[2]{D(#1\|#2)}

\numberwithin{equation}{section}

\title{Random independent sets and local sparsity}
\author[E.\ Davies]{Ewan Davies}
\address{Department of Computer Science, Colorado State University, Fort Collins CO, USA}
\email{research@ewandavies.org}
\date{September 3, 2026}

\begin{document}

\begin{abstract}
We analyze random constructions of independent sets in locally sparse graphs, specifically graphs with bounded maximum average degree in neighborhoods or with fractionally $r$-colorable neighborhoods.
Specializing our methods to finding large independent sets and low-weight fractional colorings, we focus on optimizing for marginals, but we also derive results that find many independent sets (i.e.\ give lower bounds on the independence polynomial) by optimizing for entropy.
Our main results generalize the local Shearer bound of Martinsson and Steiner for triangle-free graphs to graphs with few triangles and to graphs with fractionally $r$-colorable neighborhoods, in the latter case improving upon a result of Dhawan.
We also extend independence polynomial bounds obtained via induction to such graphs, improving upon known bounds obtained by local occupancy by relaxing the necessary hypotheses from a maximum degree condition to an average degree condition.
\end{abstract}

\maketitle

\section{Introduction}
\label{sec:introduction}

For a finite simple graph $G=(V,E)$, let $N_G(v)$ be the neighborhood of a vertex $v\in V$, set $N_G[v]=N_G(v)\cup\{v\}$, and let $d(v)=|N_G(v)|$ be the degree of $v$.
Where possible, we drop the subscripts for brevity.
We write $\cI(G)$ for the set of independent sets of $G$, including $\emptyset$.
For a fugacity parameter $\lambda$, the (univariate) independence polynomial of \(G\) is
\[
 Z_G(\lambda)=\sum_{I\in \cI(G)}\lambda^{|I|}.
\]
In the statistical physics literature, $Z_G(\lambda)$ is the partition function of the hard-core model and $\log Z_G(\lambda)$ is its free energy. 
The hard-core model is the distribution on $\cI(G)$ where each independent set $I$ is chosen with probability proportional to $\lambda^{|I|}$ (for $\lambda>0$):
\[ \pi_{G,\lambda}(I) = \frac{\lambda^{|I|}}{Z_G(\lambda)}. \]
Note that $Z_G(1)=|\cI(G)|$ counts the number of independent sets in \(G\).

The probabilistic method, especially in the form of analyzing an algorithm, has long been a powerful tool for finding independent sets in graphs. For examples of algorithms and probabilistic yielding lower bounds on the independence number, see e.g.~\cite{Gri83,AKS81,Wei81}
An elegant proof of the Caro--Wei theorem~\cite{Car79,Wei81} popularized by Alon and Spencer~\cite{AS16} can be seen as analyzing the greedy algorithm under a uniform random vertex ordering.
Shearer's theorem giving what remains the best-known lower bound on the independence number of a triangle-free graph of given average degree~\cite{She83} also has this flavor. 
More recently, Dhawan analyzed a variation on a greedy algorithm to find independent sets in graphs with (fractionally) $r$-colorable neighborhoods~\cite{Dhawan26Entropy}. 
A modern development of these ideas seeks not just a large independent set, but a distribution on independent sets with a given lower bound on the marginal at each vertex. This program was initiated by Kelly and Postle~\cite{KP24} and given the name \emph{fractional coloring with local demands}, and in basic settings such as the Caro--Wei theorem the elegant idea is implicit in prior work.
For our purposes, we can specialize to a version of local demands such that for some function $g : [0,\infty)\to (0,1]$, we seek a distribution on independent sets such that the marginal at each vertex $v$ is at least $g(d(v))$, though we also consider this concept for a weighted notion of degree.

Another line of research seeks bounds on the independence polynomial of a graph, which counts independent sets according to their size. Most relevant to this work are lower bounds obtained via local occupancy~\cite{DJPR17b,DKPS20Structure}, which involves an analysis of the marginals of the hard-core model, and via induction~\cite{SSSZ19,LS26,BHK25}. 
The univariate lower bound of Sah et al.~\cite{SSSZ19} and Lee--Seo's multivariate extension~\cite{LS26} give the independence polynomial analogue of the Caro--Wei theorem in the sense that the bound is tight for a disjoint union of cliques (and certain parameter values in the multivariate case). 
It is less clear whether there is a canonical independence polynomial bound corresponding to Shearer's independence number bound for triangle-free graphs of given average degree~\cite{She83}, or, more in line with the Caro--Wei theorem, degree-sequence versions thereof~\cite{She91,LRZ02}. 
The work of Buys, van den Heuvel, and Kang~\cite{BHK25} provides a compelling analogue of Shearer's theorem~\cite{She83}, but degree-sequence versions are developed for the first time in this work.
It is also important to mention the upper bound on the independence polynomial of regular graphs due to Kahn and Zhao~\cite{Kah01,Zha10} as influential in this line of work.

Fractional coloring with local demands and lower bounds on the independence polynomial can be connected by the Gibbs variational principle, which states that the free energy $\log Z_G(\lambda)$ is the maximum of the sum of the expected energy and the entropy over all distributions on $\cI(G)$. Write $\mathcal P(\cI(G))$ for the set of probability distributions on $\cI(G)$, $\bI$ for a random independent set, and $\EE_\mu|\bI|$ for the expected size of the random independent set under the distribution $\mu$. With $H(\mu)$ denoting Shannon entropy (in natural units, not binary) and $\dkl{\cdot}{\cdot}$ the Kullback--Leibler divergence, we have for any distribution $\mu\in\mathcal P(\cI(G))$ and any $\lambda>0$
\begin{equation}
  \log Z_G(\lambda) = \EE_\mu|\bI|\log\lambda + H(\mu) + \dkl{\mu}{\pi_{G,\lambda}}.
\end{equation}
The proof, which we omit, is a simple matter of expanding definitions (with the caveat that here and throughout we use natural logarithms). 
Since the Kullback--Leibler divergence is nonnegative, this gives a lower bound on the free energy in terms of the marginals and entropy of any distribution on independent sets:
\begin{equation}
  \log Z_G(\lambda)
  =\max_{\mu\in\mathcal P(\cI(G))}
  \bigl\{
    \EE_\mu|\bI|\log\lambda
    +H(\mu)
  \bigr\}.
\end{equation}
We can write equality above due to the choice $\mu=\pi_{G,\lambda}$.
Thus, an arbitrary $\mu\in\mathcal P(\cI(G))$ gives an affine lower bound on the free energy in terms of $\log\lambda$.
The larger the marginals of $\mu$, the larger the slope, but one may prefer to optimize for the contribution from the entropy instead. 
A simpler argument along the same lines gives that when $G$ contains an independent set of size $k$ we have $\log Z_G(\lambda)\ge k\log(1+\lambda)$ for any $\lambda>0$.
While we avoid explicitly casting our methods in a common framework as the analysis of a random process constructing an independent set, the connection is clear and motivates our approach.

We give novel results for fractional coloring with local demands and for lower bounds on the independence polynomial in two settings: graphs with few triangles and graphs with fractionally $r$-colorable neighborhoods. Here the energy-entropy tradeoff is important: our direct independence polynomial bounds are significantly stronger than the elementary bounds obtained by summing over all subsets of the large independent sets supplied by our local demand results.
This detail is discussed in \cref{app:details}.
As is standard in prior works, we give our proofs by induction on the number of vertices. 
We discuss an intriguing connection with random processes on $\cI(G)$ in \cref{app:random_processes}.
A potential advantage of reformulating the proofs through a random process is the clear route to efficient randomized sampling algorithms for the distributions we guarantee with local demand theorems. We avoid the details as they are tedious, but find the computational efficiency perspective compelling.

\subsection{Fractional coloring with local demands}

For fractional coloring, our methods seem to necessitate handling positive vertex weights even if one is initially interested in unweighted graphs.
The important notion is a weighted generalization of vertex degree using positive vertex weights $w\in \RRpos^{V}$.
For $S\subseteq V$, write $w(S)=\sum_{x\in S}w(x)$ for the total weight of vertices in $S$, and define the weighted degree of a vertex $v\in V$ by
\[
 d_w(v)=\frac{w(N(v))}{w(v)}.
\]
We typically work with vertex weights in a scale-free manner so that our results are unchanged under a scaling of $w$, so that constant weight functions recover the usual unweighted case.
For a probability distribution $\mu$ on $\cI(G)$ and random $\bI\sim \mu$, write $\mu(v)=\PP_{\mu}(v\in\bI)$ for the marginal of $v$.
For a function $q\in [0,1]^{V}$, we say that $\mu$ meets local demands \(q\) if $\mu(v)\ge q(v)$ for every $v\in V$.
Equivalently, a demand function $q$ is met by some distribution if and only if $q$ (interpreted as a vector) lies in the independence polytope of \(G\).
See the seminal work of Kelly and Postle~\cite{KP24} for a thorough treatment of the notion of local demands for fractional coloring.
In this work we specialize to functions $q$ that depend only on the weighted degree of a vertex, i.e., $q(v)=g(d_w(v))$ for some function $g:[0,\infty)\to(0,1]$ and weight function $w\in \RRpos^V$, and we will refer to $g$ as a demand function.

Since the influential work of Kelly and Postle~\cite{KP24,KP24a}, there has been substantial interest in fractional colorings with local demands.
Martinsson and Steiner's local Shearer bound~\cite{MS25} and Dhawan's entropy-based construction~\cite{Dhawan26Entropy} give two different analyses.
Further work motivated by this topic includes a precise analysis of the fractional chromatic number of high-girth $d$-degenerate graphs~\cite{ADN26}, and local demands results for $K_r$-free graphs~\cite{DDGMNSS26+}.
We continue in this line by extending the local Shearer bound to graphs with few triangles and to graphs with fractionally \(r\)-colorable neighborhoods.
The resulting bounds smoothly generalize the exact form of the local Shearer bound for triangle-free graphs. In the case of fractionally colorable neighborhoods, we improve the leading asymptotic order by a constant factor compared to Dhawan's result.
We give a precise comparison after \cref{thm:LDRColNbdhs}.
The sparse neighborhoods setting has received substantial attention in the context of standard (and list and correspondence) colorings too~\cite{AKS99,DJKP21Occupancy,DKPS20Structure,AIS19a,Vu02,PH21a}.

Our results generalize Shearer's bound on the independence number of a triangle-free graph of given average degree~\cite{She83} and Li--Rousseau--Zang's generalization of Shearer's bound to the setting of graphs with bounded maximum average degree in neighborhoods~\cite{LRZ01} (and the strengthening to a degree-sequence version of the bound~\cite{LRZ02}).
Our proofs follow Martinsson and Steiner's induction~\cite{MS25} and combine it with Dhawan's approach~\cite{Dhawan26Entropy} in the case of graphs with fractionally $r$-colorable neighborhoods.
The main idea is to prove simultaneously for all positive weights that there is a distribution on independent sets meeting the local demands; the introduction of vertex weights appears to be crucial to the method.
In the unweighted case, our results also generalize fractional chromatic number bounds obtained via local occupancy~\cite{DJKP20Local,DJKP21Occupancy,DKPS20Structure}. The resulting upper bounds on the fractional chromatic number are slightly different, but asymptotically equivalent, for example in regular graphs.
The extension to weighted graphs or graphs with given average degree, however, seems out of reach of local occupancy methods at present.
One curious feature of our results in the case of graphs with fractionally $r$-colorable neighborhoods is that they seem to make essential use the fractional colorability of the neighborhoods. Local occupancy methods derive similar, albeit weaker, results from the weaker assumption that the neighborhoods have Hall ratio at most \(r\), see~\cite{DKPS20Structure}.
We suspect that the local demands setting requires the stronger assumption, but leave this as an open question.

Recall that the maximum average degree $\mad(H)$ of a graph $H$ is
\[ \mad(H) = \max_{H'\subseteq H} \frac{2|E(H')|}{|V(H')|}, \]
where the maximum is over all subgraphs $H'$ of $H$ with at least one vertex.
We define $\mad(H)=0$ if $H$ has no vertices.
An important generalization of triangle-free graphs is the class of graphs with maximum average degree at most $a$ in neighborhoods.
We refer to this setting as the \emph{sparse neighborhoods} setting.
Taking $a=0$ recovers triangle-free graphs.
The following result is a local demand version of Li, Rousseau and Zang's independence number bounds~\cite{LRZ01,LRZ02}.
For convenience we specialize to the case of a fixed mad bound in neighborhoods, but the proof method could be adapted to handle a more general setting where the mad bound is not fixed in advance.

\begin{theorem}\label{thm:LDAsymptoticSparseNbhds}
  Fix $a\ge 0$.
  For every graph $G=(V,E)$ such that for every $v\in V$, $\mad(G[N(v)])\le a$, and for every positive weight function $w\in\RRpos^V$, there is a distribution $\mu$ on $\cI(G)$ such that for all $v\in V$,
  \[
   \mu(v)
   \ge(1-o(1))\frac{\log d_w(v)}{d_w(v)}
   \qquad\text{as }d_w(v)\to\infty.
  \]
\end{theorem}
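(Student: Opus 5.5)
We follow the Martinsson--Steiner induction on $|V|$, proving a stronger statement for all positive weights simultaneously with an explicit demand function. We fix a decreasing, convex, differentiable function $g=g_a:[0,\infty)\to(0,1]$ with $g(d)=(1-o(1))\log d/d$ as $d\to\infty$. The natural candidate is the Shearer/Li--Rousseau--Zang type function solving a differential inequality of the form
\[
 (d+1)\,g(d) \le 1 + (d-d^2-\tfrac{a}{2}d)\,g'(d)
\]
(or a suitable variant). We then prove: for every $G$ with $\mad(G[N(v)])\le a$ for all $v$, and every $w\in\RRpos^V$, there is $\mu$ on $\cI(G)$ with $\mu(v)\ge g(d_w(v))$ for all $v$. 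The asymptotic statement then follows from the asymptotics of $g$.

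The induction step is as follows. Choose a random vertex $\mathbf u$ with $\PP(\mathbf u=u)=p_u$, where $p_u$ is proportional to some weighting such as $w(u)$ times a correction. Put $\mathbf u$ into the independent set, delete $N[\mathbf u]$, and apply induction to $G-N[\mathbf u]$ with new weights $w'$. The key design choice is that $w'$ can be modified on survivors. In particular, for vertices $x$ adjacent to deleted neighbors of $u$, we reweight so that the weighted degree $d_{w'}(x)$ changes in a controlled way. The hereditary property ($\mad$ bound in neighborhoods) passes to induced subgraphs, so induction applies. Then
\[
 \mu(v) = p_v + \sum_{u\notin N[v]} p_u\, g\bigl(d'_u(v)\bigr),
\]
where $d'_u(v)$ is the weighted degree of $v$ in $G-N[u]$. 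By convexity of $g$ (Jensen), it suffices to show that
\[
 p_v + (1-p(N[v]))\,g(\bar d) \ge g(d_w(v)),
\]
where $\bar d$ is the averaged new degree. Using $g(\bar d)\ge g(d)+g'(d)(\bar d-d)$ reduces this to a linear inequality in the expected degree drop.

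The crux is lower bounding the expected loss of weighted degree of $v$. Each neighbor $x\in N(v)$ is deleted when $\mathbf u\in N(x)\setminus N[v]$. The overlap between $N(x)$ and $N(v)$ is precisely where triangles live: for $x\in N(v)$, the vertices of $N(x)\cap N(v)$ do not help. Summing over $x$, the weighted number of such pairs is controlled by the weighted edge count of $G[N(v)]$. Since $\mad(G[N(v)])\le a$, we orient the edges of $G[N(v)]$ so that each vertex has in-degree at most roughly $a/2$ (Hakimi). We can also allocate weights via a fractional orientation to handle weighted counts, which yields a loss of at most $(a/2)\,w$-mass relative to the triangle-free computation.

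I expect the main obstacle to be choosing $p_u$ and the reweighting $w'$ so that the weighted overlap bound is genuinely in terms of $a$ regardless of how the weights are distributed within $N(v)$. The first thing I would try is $p_u\propto w(u)g(d_w(u))$-type choices, mimicking Martinsson--Steiner. I would then verify that the differential inequality absorbs the $a$-term, giving $g(d)\sim\log d/d$ as $d\to\infty$ since $a$ is fixed.
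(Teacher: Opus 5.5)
Your outer reduction matches the paper's: prove an exact weighted bound $\mu(v)\ge g(d_w(v))$ for a Li--Rousseau--Zang-type $g$, then read off $g(d)\sim\log d/d$. The paper takes $g=f_{b+1}$ with $b=3a/2$, via \cref{thm:LDSparseNbdhs} and \cref{prop:Theta}. Your differential inequality has the wrong sign on the $a$-term, though. Since $g'<0$, the term $-\tfrac a2 d\,g'$ only enlarges the right side, so Shearer's $f$ itself satisfies it and it encodes no cost from triangles. The correct requirement is $(d+1)g\le1+(md-d^2)g'$ with $m>1$.

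More seriously, the step you flag as the main obstacle is a genuine gap, and the orientation argument cannot close it. In the averaged new degree of $v$, triangles contribute $\frac{2}{w(v)}\sum_{yz\in E(G[N(v)])}w(y)w(z)$, while the equation for $f_m$ absorbs only $(m-1)\,w(N(v))$. A fractional orientation of $G[N(v)]$ bounds the former only by $\frac a2\cdot\frac{\max_{y\in N(v)}w(y)}{w(v)}\cdot w(N(v))$, and this ratio is unbounded. For a triangle $vyz$ with $w(y)=w(z)=1$ and $w(v)$ small, $\mad(G[N(v)])=1$, yet you would need $m-1\ge1/w(v)$. The induction must handle arbitrary weights, since your own reweighting produces them. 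So the triangle term at $v$ cannot be paid for by anything depending only on $a$ and data at $v$.

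The missing idea in the paper is a nonlocal compensation. It adds to Martinsson--Steiner's drift $w(N(z))$ a correction $\eta_z$, where $\eta\ge0$ solves the obstacle problem $L\eta\ge p$ for the weighted Laplacian, with $p_z=w(z)T_z-\beta w(z)^3d_z$ (\cref{lem:sparse-obstacle-data}). This shifts triangle excess along edges. Its solvability with bounded $\eta$ rests on global facts: the component sums $\sum_{z\in C}p_z=6T_{G[C]}(w)-\beta D_{G[C]}(w)$ are negative and cut sums are bounded, i.e.\ $\Theta(G)<\beta$. \Cref{prop:Theta} turns your mad hypothesis into $\Theta(G)\le3a/2$, so the method gives $m=1+3a/2$ rather than $1+a/2$; this does not matter for the asymptotic statement.

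There is also a structural gap. Your step always selects a pivot ($\sum_u p_u=1$), recurses on a proper subgraph, and then linearizes with a tangent line. The resulting linear inequality does not hold exactly at finite step size. Already for triangle-free $G$ with normalized $p=w$ and no reweighting, it reads $w(N(v))^2\le\sum_{y\in N(v)}w(y)\,w(N(y))$ for every $v$. This fails at the center of a star $K_{1,k}$, $k\ge2$, with uniform weights. The drift that repairs it matches the differential equation only to first order in the step size.

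The paper therefore makes the step infinitesimal. With probability $1-\eps$ it stays on $G$ with tilted weights $w(z)e^{\eps u_z}$, and with probability $\eps$ it selects a $w$-random pivot. It proves $\EE Y_v\ge g(d_v)-o(\eps)$ uniformly, and the least-error contraction $\delta(G)\le o(\eps)/\eps$ of \cref{lem:LDInductionDriver} then forces $\delta(G)=0$. Without both this device (or another way to control second-order terms) and the global Laplacian correction, your outline does not give an exact inequality at every vertex.
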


Due to the introduction of vertex weights, our methods require a generalization of a maximum average degree bound in neighborhoods: the supremum of a weighted triangle count over all positive weight functions must be bounded.
For a graph \(H=(A,F)\) define
\begin{equation}\label{eq:intro-TD}
\begin{aligned}
 T_H(w)&=\sum_{\{x,y,z\}\in\mathcal T(H)}w(x)w(y)w(z),\\
 D_H(w)&=\sum_{xy\in F}w(x)w(y)(w(x)+w(y)),
\end{aligned}
\end{equation}
where \(\mathcal T(H)\) is the set of (unordered) triangles of \(H\).
If $w(v)=1$ for all $v\in A$, note that $T_H(w)$ is the number of triangles of $H$ and $D_H(w)=2|F|$ is the degree sum of $H$. 
Now, for a graph $G=(V,E)$ we define the triangle density parameter $\Theta(G)$ by
\begin{equation}\label{eq:intro-Theta}
 \Theta(G)=
 \max_{A\subseteq V}
 \ \sup_{w\in\RRpos^{A}}
 \begin{cases}
  6T_{G[A]}(w)/D_{G[A]}(w),&D_{G[A]}(w)>0,\\
  0,&D_{G[A]}(w)=0.
 \end{cases}
\end{equation}
In particular, \(\Theta(G)=0\) if and only if \(G\) is triangle-free.
Compared to a maximum average degree bound in neighborhoods, the supremum over vertex weights in the definition of $\Theta$ incurs some slight loss, and we show in \cref{prop:Theta} that if every neighborhood of $G$ has maximum average degree at most \(a\), then \(\Theta(G)\le 3a/2\).
Curiously, the constant $3/2$ is tight by the example of book graphs.

The precise version of \cref{thm:LDAsymptoticSparseNbhds} uses a generalization of Shearer's function first studied by Li, Rousseau and Zang~\cite{LRZ01,LRZ02}.
For \(m\ge1\), define $f_m : [0,\infty)\to (0,1]$ by $f_m(0)=1$ and for $x>0$
\begin{equation}\label{eq:fmDef}
 f_m(x)=
 \int_0^1
 \frac{(1-t)^{1/m}}{m+(x-m)t}\,\mathrm dt.
\end{equation}
Write \(f=f_1\).
Explicitly,
\begin{equation}
 f(x)=\frac{x\log x-x+1}{(x-1)^2},
\end{equation}
with continuous values \(f(0)=1\) and \(f(1)=1/2\).
This is Shearer's function~\cite{She83}, also used by Martinsson and Steiner~\cite{MS25}.

\begin{theorem}\label{thm:LDSparseNbdhs}
  Fix $b\ge 0$.
  For all graphs $G=(V,E)$ such that $\Theta(G)\le b$ and for all positive weight functions $w\in\RRpos^V$, there is a distribution $\mu$ on $\cI(G)$ such that, for all $v\in V$,
  \begin{equation}\label{eq:LDSparseNbhdsMarginal}
   \mu(v) \ge f_{b+1}(d_w(v)).
  \end{equation}
\end{theorem}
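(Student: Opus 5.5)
The plan is to follow Martinsson and Steiner's induction on $|V|$, proving the statement simultaneously for all positive weight functions $w$. First I will use LP duality to reduce to a single linear inequality. By Farkas' lemma, the demand vector $q(u)=f_{b+1}(d_w(u))$ lies in the independence polytope of $G$ if and only if, for every cost vector $c\in\RRnn^V$, some distribution $\mu$ satisfies $\sum_u c(u)\mu(u)\ge\sum_u c(u)f_{b+1}(d_w(u))$.

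\textbf{Construction.} I will build $\mu$ by one step of a random greedy process. Choose a vertex $v$ with probability $p_v$, output $\{v\}\cup\bI'$, where $\bI'$ is drawn from the distribution on $G-N[v]$ given by induction. The weights on $G-N[v]$ are the restriction of $w$, so every weighted degree there can only decrease. The hypothesis $\Theta(G)\le b$ is hereditary under taking induced subgraphs, since $\Theta$ is defined by a maximum over all $A\subseteq V$, so induction applies.

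\textbf{Lower bound on marginals.} For $u\notin N[v]$, the new degree is $d_w(u)-\delta_{uv}$ with $\delta_{uv}=w(N(u)\cap N(v))/w(u)$. Since $f_m$ is convex and decreasing, $f_m(d-\delta)\ge f_m(d)+\delta\,|f_m'(d)|$. Therefore
\[
 \mu(u)\ \ge\ p_u+\sum_{v\notin N[u]}p_v\bigl(f(d_u)+\delta_{uv}|f'(d_u)|\bigr),
\]
where $f=f_{b+1}$ and $d_u=d_w(u)$. I will choose $p_v$ proportional to $w(v)$, possibly reweighted by the costs $c$; this is where the freedom over all $w$ is used. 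Expanding $\sum_{v}w(v)w(N(u)\cap N(v))$ gives $\sum_{x\in N(u)}w(x)w(N(x))$, minus the contributions from $v\in N(u)$. Those subtracted terms are exactly weighted triangles through $u$.

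\textbf{Closing the induction.} After summing against $c$, the required inequality reduces to two ingredients. The first is a differential identity for $f_m$, the analogue of Shearer's $(x+1)f(x)=1+(x-x^2)f'(x)$. For $f_m$ I expect this to be $(x+m)f_m(x)=1+x(m-x)f_m'(x)$, which I would derive from \eqref{eq:fmDef} by integration by parts in $t$. The second ingredient bounds the triangle loss: the cyclic sums of triangle terms are at most $\tfrac{b}{6}D(w)\cdot(\text{edge terms})$ by the definition of $\Theta$, because $6T/D\le b$ for every induced subgraph and weight. The value $m=b+1$ should be exactly what makes the $-m$ term in the ODE absorb this triangle deficit.

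\textbf{Main obstacle.} The hard step is pointwise control, since the neighbor-degree terms $w(N(x))$ do not match $d_u$ for an individual $u$. I would handle this as Martinsson and Steiner do. Work with the dual sum over $u$ weighted by $c(u)$, then symmetrize the double sum over edges $ux$ using the monotonicity and convexity of $x\mapsto x|f'(x)|$ (or of $f$) to compare $f'(d_u)$ with $f'(d_x)$. I would then choose $w$-dependent probabilities $p_v$ proportional to $c(v)$-adjusted weights so that the symmetrized sum becomes nonnegative term by term. Checking the needed convexity and monotonicity properties of $f_m$, and confirming that the triangle correction enters with exactly the factor $6T/D$, is the step I expect to require the most care.
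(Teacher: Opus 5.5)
\textbf{There is a genuine gap at the step you call the main obstacle, and the tools you propose do not close it.}

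\emph{Why your scheme falls short.} You pivot once and recurse on $G-N[v]$ with the \emph{restricted} weights. So the induction hypothesis is only ever applied to restrictions of $w$, and the freedom over all weights plays no role. Take $p=w$ with $w(V)=1$. Combine your displayed inequality with the correct differential equation $x(x-m)f_m'(x)+(x+1)f_m(x)=1$. (Your guess $(x+m)f_m=1+x(m-x)f_m'$ already fails at $x=0$.) This gives exactly
\[
 \mu(u)-f_m(d_u)\ge\frac{|f_m'(d_u)|}{w(u)}\Bigl(\sum_{y\in N(u)}w(y)w(N(y))-w(N(u))^2-T_u+(m-1)w(u)w(N(u))\Bigr),
\]
where $T_u=\sum_{y\in N(u)}w(y)w(N(u)\cap N(y))$. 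The bracket is negative at the centre of an unweighted star with more than $b+1$ leaves. Moreover, $\Theta(G)\le b$ gives no pointwise bound $T_u\le b\,w(u)w(N(u))$: a triangle with weights $(\eps,1,1)$ has $\Theta=1$, yet at the light vertex $T_u=2$ while $w(u)w(N(u))=2\eps$.

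Choosing a different $p$ via duality does not rescue this. The marginals you can certify are affine in the pivot law $p$. So by the minimax theorem, ``for every $c\ge0$ some $c$-dependent $p$ works'' is equivalent to ``one $p$ meets every demand pointwise''. The LP reformulation therefore buys nothing. Your edge symmetrization works only for essentially constant $c$: in the unweighted case it produces terms $(d_y-d_u)\bigl(c(u)|f'(d_u)|-c(y)|f'(d_y)|\bigr)$. That is Shearer's proof of $\alpha(G)\ge\sum_v f(d(v))$, not a local-demand bound. The promised ``$c$-adjusted probabilities'' are where the whole proof would have to be, and none is given.

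\emph{The missing mechanism.} What is missing is a way to move surplus between vertices. The paper, following Martinsson and Steiner (whose induction rests on a weight drift, not on symmetrization), uses a self-referential step. With probability $1-\eps$ it keeps $G'=G$ but passes to tilted weights $w(z)e^{\eps u_z}$. With probability $\eps$ it pivots at $x$, chosen with probability $w(x)$. The least additive error $\delta(G)$ and \cref{lem:LDInductionDriver} then turn an $o(\eps)$ defect into $\delta(G)\le r_{n,\beta}(\eps)/\eps\to0$.

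The drift contributes $\bigl(\sum_{y\in N(v)}w(y)u_y-w(N(v))u_v\bigr)/w(v)$ to the first variation of the degree of $v$. Choosing $u_z=w(N(z))$ therefore replaces $\sum_y w(y)w(N(y))$ by $w(N(v))^2$, and the bad term above disappears. This is also why the induction must hold for all weights.

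For triangles, $\Theta(G)<\beta$ only gives the componentwise inequality $\sum_{z\in C}\bigl(w(z)T_z-\beta w(z)^3d_z\bigr)<0$. \Cref{lem:sparse-obstacle-data} converts this into a pointwise statement. It solves the obstacle problem $\eta\ge0$, $(L\eta)_z\ge w(z)T_z-\beta w(z)^3d_z$, for the weighted Laplacian $L$, and adds $\eta$ to the drift. Only then does the extra term $(m-1)w(v)d_v|f_m'(d_v)|$, coming from the differential equation with $m=\beta+1$, absorb the triangle contribution. Finally one lets $\beta\downarrow b$, using continuity of $f_m$ in $m$. Without some such redistribution device, your proposal does not prove the theorem.
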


The other locally sparse setting we consider is that of graphs with fractionally \(r\)-colorable neighborhoods.
In this setting, we assume that for each vertex the subgraph induced by its neighborhood is fractionally \(r\)-colorable, for some fixed \(r\). 
This is equivalent\footnote{Perhaps a standard definition of fractional colorability gives this statement with marginals at least $1/r$, but a standard thinning argument gives the statement with marginals exactly $1/r$.} to assuming that for each $v\in V$, there exists a distribution on $\cI(G[N(v)])$ such that each vertex in the neighborhood is occupied with probability $1/r$.
We obtain the following asymptotic result, involving an increasing function \(K:[1,\infty)\to[1,\infty)\) such that \(K(1)=1\) and \(K(r)\sim\log r\) as \(r\to\infty\).
This function is defined in more detail in \cref{sec:r-colorable} (and we adopt the notation used in~\cite{DKPS20Structure}). 
Related results include the independence number bound of Alon~\cite{Alo96} whose proof is based on an alternative method of Shearer~\cite{She95a} (that was developed into local occupancy~\cite{DJPR17b,DJKP20Local,DJKP21Occupancy,DKPS20Structure}) and (list) chromatic number results in this setting~\cite{Joh96,BKNP22,DKPS20Structure}.

\begin{theorem}\label{thm:LDAsymptoticRColNbdhs}
  Fix \(r\ge1\).
  For every graph \(G=(V,E)\) whose neighborhoods are fractionally \(r\)-colorable and every positive weight function $w\in\RRpos^V$, there is a distribution \(\mu\) on \(\cI(G)\) such that, for all $v\in V$,
  \[
   \mu(v)
   \ge(1-o(1))\frac{\log d_w(v)}{K(r)d_w(v)}
   \qquad\text{as }d_w(v)\to\infty.
  \]
\end{theorem}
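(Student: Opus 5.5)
This asymptotic statement should follow from a precise, nonasymptotic local-demand theorem for fractionally $r$-colorable neighborhoods, analogous to \cref{thm:LDSparseNbdhs}. I will prove that precise version by the Martinsson--Steiner induction on $|V|$, simultaneously for all positive weights $w$. The target is a demand function $g_r:[0,\infty)\to(0,1]$ with $g_r(0)=1$, nonincreasing and convex, such that for every $G$ with fractionally $r$-colorable neighborhoods and every $w\in\RRpos^V$ there is $\mu$ with $\mu(v)\ge g_r(d_w(v))$. I then check that $g_r(x)\sim \log x/(K(r)x)$.

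The natural candidate for $g_r$ is the solution of a differential inequality generalizing the one satisfied by Shearer's $f$. Recall that $f$ satisfies $(x+1)f'(x)+ f(x)\cdot(\text{correction})\le 1$-type identities, more precisely $x(x-1)f'(x)+(x+1)f(x)=1$.

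\textbf{Induction step.} Pick a vertex $u$; its choice, or a $w$-weighted random choice of $u$, is the first thing to fix. Build $\mu$ as a mixture of two parts:
\begin{itemize}
\item[(a)] with some probability $p$, include $u$ and recurse on $G-N[u]$;
\item[(b)] otherwise, recurse on $G-u$ with modified weights.
\end{itemize}
In step (b) I will use Dhawan's entropy trick. I take the fractional $r$-coloring of $G[N(u)]$, given as a distribution $\nu$ on $\cI(G[N(u)])$ with marginals exactly $1/r$. I use it to reweight the neighbors of $u$, so that removing $u$ lowers the weighted degrees of vertices in $N(u)$ in a controlled way. This is the point where the fractional colorability, and not just a Hall ratio bound, gets used: averaging over $\nu$ lets me combine the induction hypotheses for the graphs $G-u-(N(u)\setminus J)$ for $J\sim\nu$.

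For each vertex $v$, convexity of $g_r$ (a Jensen step) then reduces the required inequality $\mu(v)\ge g_r(d_w(v))$ to a one-variable inequality in $d=d_w(u)$ and the weights. That inequality should be exactly a differential or difference inequality whose extremal solution defines $g_r$ through an integral like \eqref{eq:fmDef}, with the constant $K(r)$ appearing from optimizing the parameter that trades off $u$'s marginal against the entropy of $\nu$. This matches the local occupancy computation in \cite{DKPS20Structure}.

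\textbf{Asymptotics.} Solve or bound the defining integral for $g_r$ by Laplace-type estimates, as for $f_m(x)\sim\log x/x$. Letting $d_w(v)\to\infty$ then gives the claimed $(1-o(1))\log d_w(v)/(K(r)d_w(v))$.

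\textbf{Main obstacle.} I expect the hardest step to be verifying the local inequality for the neighbors of $u$ in step (b), i.e.\ showing that the $\nu$-averaged weight modification is consistent with the demand at every $v\in N(u)$ simultaneously. My first attempt would scale the weight of each $x\in N(u)$ by a factor depending on $1/r$ and $p$. I would then prove the resulting inequality via convexity of $x\mapsto g_r(x)$ together with the tangent-line bound $g_r(d-\delta)\ge g_r(d)-\delta g_r'(d)$.
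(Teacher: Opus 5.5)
There is a genuine gap, and it is structural. In branch (b) you delete the pivot $u$ (you recurse on $G-u-(N(u)\setminus J)$). So, conditional on being the pivot, $u$ is selected with probability exactly $p$. That cannot meet a demand with $g_r(d)\to1$ as $d\downarrow0$, such as the intended $f(K(r)\,\cdot)$. For example, take $G=K_2$ on $\{v,x\}$ with $w(v)=1$ and $w(x)=\eps$. For any pivot rule and any fixed $p\in(0,1)$, one gets $\mu(v)\le\max\{p,(1-p)/r\}<1$, while the demand $g_r(\eps)\to1$. Correspondingly, the local inequality your scheme leads to is $p-(1+d)g(d)+\frac dA g(d/A)\ge0$, which fails for small $d$. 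Deleting the pivot is the structure of the independence polynomial proof in \cref{thm:IPRColMulti}, where the deletion identity is exact; it does not transfer to marginals.

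The paper's key idea is to \emph{keep} the pivot $x$ in the certificate branch. Only $N(x)\setminus J_x$ is deleted, and the weights on $J_x$ are multiplied by $A=r/(1-\theta)$. This has three effects:
\begin{enumerate}
\item[(i)] The pivot's expected weighted degree becomes $d/(1-\theta)$, so its own contribution is $\theta+(1-\theta)g(d/(1-\theta))$, which equals $1$ at $d=0$.
\item[(ii)] A non-neighbor of the pivot keeps its expected weighted degree exactly, because $(1-\theta)A/r=1$, so Jensen gives $g(d_v)$.
\item[(iii)] A neighbor survives with probability $1/A$, with weighted degree at most $d_v/A$.
\end{enumerate}
Thus $\EE Y_v\ge g(d_v)$ reduces to \eqref{eq:scaled-shearer}. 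Since $G'=G$ can now occur, a plain induction on $|V|$ does not close. (For $r=1$ the certificate branch is a pure reweighting, which is how the Martinsson--Steiner drift is reproduced.) You need the error-contraction argument of \cref{lem:LDInductionDriver}, applied with $h=0$ and $\rho=1-\theta$.

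Second, the analytic core is missing, and the tool you propose would not suffice. The demand is simply Shearer's function rescaled, $g(d)=f\bigl(\frac{\log A}{\theta}d\bigr)$. Your intuition about the tradeoff is right: $\inf_\theta\log(r/(1-\theta))/\theta=K(r)$, attained at $\theta=1-1/K(r)$ when $r>1$. So the demand is $f(K(r)d)$, and the asymptotics follow at once from $f(x)\sim\log x/x$, with no new integral to estimate.

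The hard step is not the neighbor case, which is a one-line monotonicity argument using that $J_x$ is independent. It is the inequality \eqref{eq:scaled-shearer}, whose neighbor term evaluates $g$ at $d/A$ with $A>1$ fixed. The additive tangent line $g(d-\delta)\ge g(d)-\delta g'(d)$ with $\delta=d(1-1/A)$ undershoots $\frac dA g(d/A)$ by roughly $(1-1/A)^2\log d/K$, where $K=(\log A)/\theta$. This loss is unbounded, whereas the leading terms of \eqref{eq:scaled-shearer} cancel exactly; that cancellation is what fixes $K$. The paper instead uses convexity of $s\mapsto\psi(e^s)$ for $\psi(z)=zf(z)$, whose second derivative is $z^2f''(z)\ge0$ by the differential equation for $f$. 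This gives $\psi(z/A)-\psi(z)\ge-(\log A)z\psi'(z)$. The proof then concludes with $z\psi'(z)=1-f(z)+zf'(z)$ and ordinary convexity of $f$.
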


Dhawan's method gives a version of this result in the unweighted setting~\cite[Thm.~1.9]{Dhawan26Entropy}.
For fixed \(\eps>0\) and integer \(r\ge1\), it guarantees marginals at least
\[
 (1-\eps)\frac{\log d(v)}{8d(v)\log(2r)}
\]
on a locally \(r\)-colorable graph of sufficiently large minimum degree.
The respective leading coefficients as the degrees tend to infinity are \(1/(8\log(2r))\) and \(1/K(r)\).
Since it turns out (\cref{lem:rColorableAnalytic}) that 
\( K(r) \le 2\log(2r)\),
the improvement factor is at least \(4\), and \(8\log(2r)/K(r)\to8\) as \(r\to\infty\).
Our statement also allows arbitrary positive vertex weights.
The two results and the proof methods are nevertheless closely related, though our proof is more complex than a simple optimization of parameters in Dhawan's method.
The precise version of our result uses Shearer's function $f$ where the input is scaled by $K(r)$.

\begin{theorem}
\label{thm:LDRColNbdhs}
  Fix \(r\ge1\).
  For every graph \(G=(V,E)\) whose neighborhoods are fractionally \(r\)-colorable and every positive weight function $w\in\RRpos^V$, there is a distribution \(\mu\) on \(\cI(G)\) such that, for all $v\in V$,
\begin{equation}\label{eq:LDRColMarginal}
 \mu(v)\ge f(K(r)d_w(v)).
\end{equation}
\end{theorem}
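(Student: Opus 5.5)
We prove \cref{thm:LDRColNbdhs} by induction on $|V|$, following the Martinsson--Steiner induction for the local Shearer bound and proving the statement simultaneously for all positive weight functions $w$. The empty graph is trivial. For the inductive step, we pick a random vertex $\mathbf v$ with $\PP(\mathbf v=u)=p(u)$ for a distribution $p$ on $V$ to be chosen. We then put $\mathbf v$ into the independent set and recurse on $G-N[\mathbf v]$ with a modified weight function $w'$. Every induced subgraph still has fractionally $r$-colorable neighborhoods, so the hypothesis is hereditary.

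Writing $\mu_{\mathbf v}$ for the distribution supplied by induction on $G-N[\mathbf v]$, the output distribution is $\mu(x)=p(x)+\sum_{u\notin N[x]}p(u)\mu_u(x)$. By induction, $\mu_u(x)\ge f(K(r)d_{w'}^{(u)}(x))$, where $d^{(u)}$ is the weighted degree in $G-N[u]$. The natural first choice is $p(u)\propto w(u)$, perhaps tilted by a function of $d_w(u)$, with $w'=w$ restricted. Removing $N[u]$ lowers the weighted degree of $x$ by $w(N(x)\cap N[u])/w(x)$. Then $x$ receives
\[
  p(x)+\sum_{u\notin N[x]}p(u)\, f\Bigl(K(r)\bigl(d_w(x)-w(N(x)\cap N(u))/w(x)\bigr)\Bigr).
\]
Since $f$ is convex and decreasing, a first-order (tangent line) bound at $K(r)d_w(x)$ reduces the target inequality $\mu(x)\ge f(K(r)d_w(x))$ to a linear inequality. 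It involves $\sum_u p(u)\,w(N(x)\cap N(u))$, i.e.\ weights of paths of length two from $x$. Shearer's differential identity for $f$, namely $(x+1)f(x)=1+(x-x^2)f'(x)$ in rescaled form, should close the calculation, as it does in the triangle-free case.

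The new ingredient is controlling edges inside $N(x)$. When $u\in N(x)$ is chosen, $x$ is deleted, which is pure loss. For $u\notin N[x]$, the gain depends on $\sum_{y\in N(x)}w(y)\,p(N(y))$. To handle the neighborhood, following Dhawan, we will not choose a single vertex. Instead we use a rule that couples with the fractional $r$-coloring of $G[N(x)]$. Take $\sigma_x$ to be a distribution on independent sets of $G[N(x)]$ with all marginals $1/r$. Averaging over $\sigma_x$ lets us bound the loss from neighbors of $x$ by the occupancy of an independent subset of $N(x)$, in the spirit of local occupancy. The function $K(r)$ is defined via exactly this optimization in \cref{sec:r-colorable}, in the notation of~\cite{DKPS20Structure}. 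We therefore expect the key local inequality to become the one defining $K(r)$: a hard-core-type optimization over $\sigma_x$ whose worst case gives the multiplier $K(r)$ on the degree. Concretely, we would likely modify the process to add a small hard-core-like random set, with fugacity tuned by the weights, rather than a single vertex, and then reweight $w'$ on survivors.

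The main obstacle is designing the weight update $w'$ and the selection rule so that the weighted degree of each surviving vertex drops by enough to produce the factor $K(r)$ simultaneously for all $x$. This must use only the fractional colorability of $N(x)$ and no global structure. I would first try $w'(y)=w(y)\,e^{-c\,\cdot(\text{occupancy of }N(y))}$ in the spirit of Dhawan's entropy compression. I would then verify the resulting one-variable inequality using convexity of $f$ and the analytic properties of $K$ (\cref{lem:rColorableAnalytic}).
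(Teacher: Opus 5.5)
Your proposal is a plan rather than a proof. The selection rule and the weight update, which you correctly flag as the main obstacle, are never specified, and the steps you do specify fail.

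\emph{Problems with the steps you specify.} Your ``natural first choice'' does not close even in the triangle-free case. On the path $P_3$ with uniform weights, choosing the pivot with probability $w(u)$, selecting it, and recursing on $G-N[u]$ with restricted weights gives the middle vertex marginal exactly $1/3$. The target is $f(2)=2\log 2-1\approx0.386$, so the bound fails. In general, your tangent-line reduction plus Shearer's ODE leaves the condition $\sum_{y\in N(x)}w(y)w(N(y))\ge w(N(x))^2$, which fails at the center of every star. This is why Martinsson and Steiner need a weight drift. In that drift (as in \cref{sub:sparseNbhds}), vertices with heavy neighborhoods \emph{gain} weight, which is the opposite direction to your tentative $e^{-c(\cdot)}$ update. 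Separately, the random process must serve all targets at once, so it cannot be coupled with the fractional coloring of each \emph{target's} neighborhood.

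\emph{The missing idea} is a non-selecting ``certificate'' branch that uses the fractional coloring of the \emph{pivot's} neighborhood. Pick a pivot $x$ with probability $w(x)$. With probability $\theta$, select $x$ and pass to $G-N[x]$ with unchanged weights. With probability $1-\theta$, select nothing: draw $J_x$ from a distribution on $\cI(G[N(x)])$ with all marginals $1/r$, delete $N(x)\setminus J_x$, and multiply the weights on $J_x$ by $A=r/(1-\theta)$. Because $(1-\theta)A/r=1$, the three kinds of target behave as follows.
\begin{itemize}
\item A target $v\notin N[x]$ keeps its expected weighted degree exactly, so Jensen handles it, triangles included.
\item A neighbor $v$ of $x$ survives with probability $1/A$. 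Since $J_x$ is independent, all common neighbors of $x$ and $v$ are gone, so its new degree is at most $d_w(v)/A$.
\item The pivot itself, if not selected, has expected degree $d_w(v)/(1-\theta)$.
\end{itemize}
Summing over pivots reduces everything to the scalar inequality, for all $d\ge0$,
\[
 \theta+(1-\theta)g\Bigl(\frac{d}{1-\theta}\Bigr)-(1+d)g(d)+\frac{d}{A}g\Bigl(\frac{d}{A}\Bigr)\ge0,
 \qquad g(d)=f\Bigl(\frac{\log A}{\theta}d\Bigr).
\]
This follows from convexity of $f$ and of $s\mapsto e^sf(e^s)$.

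The constant is then $\inf_\theta\log(r/(1-\theta))/\theta=K(r)$, attained at $\theta=1-1/K(r)$ (for $r=1$ one lets $\theta\downarrow0$). So $K(r)$ comes from a tradeoff between selection probability and zoom factor, not from a hard-core optimization.

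Finally, this branch can return $G'=G$ with new weights, for example when $N(x)$ is independent and $J_x=N(x)$. Plain induction on $|V|$ therefore does not suffice. The paper closes the loop with the least-error contraction of \cref{lem:LDInductionDriver}, with $h=0$ and $\rho=1-\theta$.
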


\subsection{Lower bounds on the independence polynomial}

We also give lower bounds on the independence polynomial using similar proof techniques. 
The method actually gives bounds on the multivariate independence polynomial (defined in \cref{sec:partition}) throughout the positive orthant.
Under an average degree hypothesis, the univariate specializations we state below for the introduction improve corresponding bounds obtained via local occupancy~\cite{DJPR17b,DKPS20Structure,DJKP21Occupancy} by replacing maximum degree with average degree (albeit with the assumption of neighborhoods having Hall ratio at most \(r\) strengthened to being fractionally \(r\)-colorable in the relevant case). 
We also give strengthenings in the form of bounds in terms of the degree sequence. 
A recent development of the occupancy method~\cite{DST25,DSST26} gives bounds on the expected size of an independent set from the hard-core model in terms of the degree sequence, but it is restricted to such small $\lambda$ that the consequences are substantially weaker than those obtained here. 
In particualr, for \(a=0\) \cite[Thm.~2]{DSST26} gives exactly the case \(a=0\) of \cref{thm:IPSparseNbhdsDegrees} below for the restricted range $\lambda\le c/\Delta$ where $c$ is a small constant and $\Delta$ is the maximum degree.
We also extend an asymptotically sharper bound of Buys, van den Heuvel, and Kang~\cite{BHK25} from triangle-free graphs to graphs with maximum average degree bound $a$ in neighborhoods.
Their triangle-free result holds for $\lambda\in[0,2]$; our extension holds for $\lambda\in[0,2(a+1)]$, recovering their range when $a=0$.
Here in the introduction, we state consequences of the deferred multivariate results as bounds on the univariate independence polynomial for graphs with given degree sequence.

For brevity, we define
\[
 \mathcal A(x)=\frac{x^2}{2}+x.
\]
Let $W$ denote the principal nonnegative branch of the Lambert \(W\)-function, defined by $W(z)e^{W(z)}=z$ for $z\ge0$.

\begin{theorem}\label{thm:IPSparseNbhdsDegrees}
Fix $a\ge0$, and let $G=(V,E)$ satisfy $\mad(G[N(v)])\le a$ for every $v\in V$.
Then, for every $\lambda>0$,
\begin{equation}
 \log Z_G(\lambda)
 \ge
 \sum_{v\in V}
 \frac{\mathcal A\bigl(W((1+\lambda)^a d(v)\log(1+\lambda))\bigr)}
      {(1+\lambda)^a d(v)}.
\end{equation}
The summand for $d(v)=0$ is interpreted by continuity as $\log(1+\lambda)$.
\end{theorem}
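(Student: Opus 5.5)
Our plan is an induction on $|V|$ in the style of Sah et al.\ and Martinsson--Steiner, applied to a stronger multivariate-type statement. Write $L=\log(1+\lambda)$, $\beta=(1+\lambda)^a$, and
\[
 \phi(d)=\frac{\mathcal A\bigl(W(\beta d L)\bigr)}{\beta d},\qquad \phi(0)=L .
\]
We aim to show $\log Z_G(\lambda)\ge\sum_v\phi(d(v))$. The base case $G$ empty is trivial.

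For the inductive step we use the two standard identities
\[
 Z_G(\lambda)=Z_{G-v}(\lambda)+\lambda Z_{G-N[v]}(\lambda)
\]
and
\[
 \log Z_G(\lambda)-\log Z_{G-v}(\lambda)=-\log\bigl(1-\PP_{\pi_{G,\lambda}}(v\in\bI)\bigr).
\]
Following Sah et al., we choose the vertex to remove by averaging, or equivalently consider a convex combination over $v$. It is cleaner to go through the Gibbs variational principle. Define a distribution $\mu$ recursively: pick $v$ with suitable probability, then either include $v$ and recurse on $G-N[v]$, or exclude it and recurse on $G-v$. Then lower bound $\EE_\mu|\bI|\log\lambda+H(\mu)$ inductively. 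Concretely, we will compare $\log Z_G$ with the weighted average over $v$ of
\[
 \log Z_{G-v}+\phi(d(v))\quad\text{or}\quad \log Z_{G-N[v]}+\ldots ,
\]
and apply the induction hypothesis to $G-v$ or $G-N[v]$.

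Removing $v$ (resp.\ $N[v]$) lowers the degrees of the neighbors $u$ of $v$ (resp.\ of vertices at distance two). The induction hypothesis therefore gains $\phi(d(u)-1)-\phi(d(u))$ per neighbor. The sparse-neighborhood hypothesis enters here: when $N[v]$ is deleted, a vertex $u\in N(v)$ loses its edges into $N(v)$, and $\mad(G[N(v)])\le a$ bounds the total edge count there by $a|N(v)|/2$. The factor $(1+\lambda)^a$ should arise by bounding, via the hard-core model, the probability that a neighbor $u$ of $v$ is uncovered given $v$ is. Each of the at most $a$ average neighbors of $u$ inside $N(v)$ blocks $u$ with ratio $(1+\lambda)^{-1}$. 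That is, $\PP(u \text{ free}\mid\ldots)\ge (1+\lambda)^{-\deg_{N(v)}(u)}$, and Jensen's inequality together with the mad bound converts this to the average exponent $a$.

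The key analytic step is to verify the local inequality, for each $v$ with neighbor degrees $d(u)$:
\[
 \phi(d(v))\le p\,L+(1-p)\cdot 0-\log\text{-type correction}+\sum_{u\in N(v)}\bigl(\phi(d(u)-1)-\phi(d(u))\bigr)\cdot(\text{weight}),
\]
with an optimally chosen inclusion probability $p=p(v)$. By convexity of $\phi$ (to be checked), the neighbor terms are bounded by the derivative $-\phi'$ and summed. Using AM--GM or Jensen over $v$ reduces this to a one-variable inequality in $d=d(v)$. We expect this to be essentially the differential equation
\[
 \beta\bigl(d\,\phi'(d)+\phi(d)\bigr)\ \text{related to}\ L\ \text{and}\ W ,
\]
whose solution is $\mathcal A(W(\beta d L))/(\beta d)$. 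Indeed, with $w=W(\beta dL)$ one has $dw/dd=w/(d(1+w))$, so $\frac{d}{dd}(d\phi)=w/\beta$. This yields a self-consistent ODE $(d\phi)'=W(\beta dL)/\beta$, which we would identify with the optimized trade-off $\max_p\{pL+H(p)\text{-type term}-p\beta d\,\phi'\}$.

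The main obstacle is this last step. We must set up the recursion so that the exchange between the entropy/energy gain at $v$ and the degree-loss at neighbors closes exactly against this Lambert-$W$ ODE, while also controlling second-order (non-derivative) terms by convexity of $\phi$. We would first try the induction with a deterministic choice of $v$ minimizing the deficit, averaged against the uniform measure on vertices, and check the inequality $\phi(d)-\phi(d-1)\ge\phi'(d-1)$-type bounds numerically for small $d$.
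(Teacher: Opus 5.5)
\textbf{There is a genuine gap.} The step you call the main obstacle is the whole proof, and you never state an inductive hypothesis for which it closes.

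You announce a ``stronger multivariate-type statement'' but never formulate one. What you actually induct on is $\sum_v\phi(d(v))$ itself. For that hypothesis, the deficit involves the degree drops of vertices at distance two from $v$ (inclusion branch) and the edges inside $N(v)$ (exclusion branch). Both depend on how $N(v)$ is wired into the rest of the graph, not on degrees. You give no choice of pivot and of $p$ that absorbs them.

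Your proposed source of $(1+\lambda)^a$ does not fit the argument either. Once the children are bounded by the induction hypothesis, no hard-core marginals appear, so a bound like $\PP(u\text{ free})\ge(1+\lambda)^{-\deg_{N(v)}(u)}$ has nowhere to enter. That is the local-occupancy mechanism behind the earlier maximum-degree versions of this bound, not an ingredient of a degree-sequence induction.

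Two smaller slips. In $G-N[v]$ the vertices of $N(v)$ are deleted, so they do not ``lose their edges into $N(v)$''. Also $(d\,\phi(d))'=W(\beta dL)/(\beta d)$, not $W(\beta dL)/\beta$.

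\textbf{The missing idea} is to strengthen the claim to a variational bound over vertex weights,
\[
\log Z_H(\lambda)\ge\max_{w}\Phi_H(w),\qquad \Phi_H(w)=\sum_x w(x)\bigl(1+\log(L/w(x))\bigr)-\beta\sum_{xy\in E(H)}w(x)w(y),
\]
in your notation $L=\log(1+\lambda)$, $\beta=(1+\lambda)^a$. Degrees are extracted only at the very end, via $2w(u)w(v)\le w(u)^2+w(v)^2$ and coordinatewise optimization; that is exactly what produces $\phi(d(v))$.

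For the induction step:
\begin{enumerate}
\item Take a maximizer $w$, which satisfies $\log(L/w(x))=\beta\,w(N(x))$.
\item Pivot at a vertex $v$ of maximum weight $m\le L$ and include it with probability $p=1-e^{-m}$.
\item In the child $H-v$, multiply the weights on $N(v)$ by $e^m$.
\end{enumerate}
A Poisson-versus-Bernoulli divergence comparison gives $h(p)+p\log\lambda\ge m(1+\log(L/m))$. Write $S=w(N(v))$. The vertex terms on $N(v)$ lose exactly $mS$, and the edges at $v$ give back $\beta mS$. Edges from $N(v)$ to the outside are unchanged in expectation. Only the edges inside $N(v)$ cost more, leaving the surplus $(\beta-1)mS-\beta(e^m-1)R$, where $R$ is the weighted edge mass of $H[N(v)]$.

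To control $R$, use the mad hypothesis in the weighted form $2\sum_{xy\in E(F)}\min\{r_x,r_y\}\le a\sum_x r_x$ (valid when $\mad(F)\le a$). Together with maximality of $m$, this gives $R\le amS/2$. Stationarity gives $R\le S^2/2\le\frac{S}{2\beta}\log(L/m)$. A one-variable inequality then shows the surplus is nonnegative for $\beta=e^{aL}$. That balance, not a blocking probability, is where $(1+\lambda)^a$ comes from.
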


The summand is decreasing and convex as a function of $d(v)$ (\cref{prop:summands}).
Thus, if $G=(V,E)$ has $V\ne\emptyset$ and average degree at most $d>0$, the theorem gives
\[
 \frac1{|V|}\log Z_G(\lambda)
 \ge
 \frac{\mathcal A\bigl(W(d(1+\lambda)^a
                 \log(1+\lambda))\bigr)}
      {d(1+\lambda)^a}.
\]
This has the form of the bound obtained for graphs of \emph{maximum} degree $d$ via local occupancy~\cite{DJKP21Occupancy,DKPS20Structure}; the degree sequence and average degree results here refine that bound.
The bound is asymptotically tight on triangle-free regular graphs for a reasonable range of $\lambda$, see~\cite{DJPR17b}. See also~\cite{BHK25} for an alternate discussion of sharpness and details on lack of precision in lower-order terms.

For triangle-free graphs of given average degree, Buys, van den Heuvel, and Kang~\cite{BHK25} proved an asymptotically sharper lower bound on the univariate independence polynomial for $\lambda\in[0,2]$.
For each fixed $\lambda\in(0,2]$, their bound and the preceding average-degree bound have the same leading order as the average degree tends to infinity, but the bound in \cref{thm:IPSparseNbhdsDegrees} loses precision in lower-order terms.
We remedy this by providing a generalization of their method to graphs with bounded maximum average degree in neighborhoods.

\begin{theorem}
\label{thm:IPSparseNbhdsBvdHK}
Fix \(a\ge0\) and let \(G=(V,E)\) be a graph with $V\ne\emptyset$ and average degree \(d\), such that for all $v\in V$, $\mad(G[N(v)])\le a$.
If
\begin{equation}\label{eq:IPSparseNbhdsBvdHKRange}
 0\le\lambda\le2(a+1),
\end{equation}
then
\begin{equation}\label{eq:IPSparseNbhdsBvdHKBound}
 \frac{1}{|V|}\log Z_G(\lambda)\ge
 \frac{\mathcal A(W(\lambda d))-\mathcal A(W(\lambda(a+2)))}
      {d-(a+2)},
\end{equation}
where at \(d=a+2\), the quotient is interpreted by continuity as \(W(\lambda(a+2))/(a+2)\).
\end{theorem}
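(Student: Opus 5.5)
The plan is to prove the bound by induction on $|V|$, in the style of Shearer's original argument as adapted by Buys, van den Heuvel, and Kang. Set $g(x)=\mathcal A(W(\lambda x))$ and
\[
 F(d)=\frac{g(d)-g(a+2)}{d-(a+2)},
\]
so \eqref{eq:IPSparseNbhdsBvdHKBound} reads $\log Z_G(\lambda)\ge |V|F(d)$.

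First I would record the analytic facts about $g$ and $F$. Differentiating $W e^W=\lambda x$ gives $g'(x)=W(\lambda x)/x$, and $g$ is increasing and concave. Hence $F$ is the slope of a chord of a concave function with one endpoint fixed at $a+2$. It follows that $F$ is positive, decreasing and convex in $d$, that $F(a+2)=g'(a+2)=W(\lambda(a+2))/(a+2)$, and that $F$ satisfies a Shearer-type differential inequality. Since $F$ is decreasing, it suffices to prove the statement with "average degree at most $d$". I would also check the small cases directly, where the bound should reduce to $\log Z\ge|V|F(d)$ for $d$ near $0$ (for instance $F(0)$ versus $\log(1+\lambda)$).

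For the induction step, I would pick $v$ uniformly at random, or weighted suitably, and use
\[
 Z_G(\lambda)=Z_{G-v}(\lambda)+\lambda Z_{G-N[v]}(\lambda)\ge (1+\lambda)Z_{G-N[v]}(\lambda).
\]
Then I would apply the induction hypothesis to $G-N[v]$, which inherits the neighborhood mad bound. Deleting $N[v]$ removes $|N[v]|=d(v)+1$ vertices and at least
\[
 \sum_{u\in N[v]}d(u)-e(G[N(v)])\ge \sum_{u\in N(v)}d(u)+d(v)-\tfrac a2 d(v)
\]
edges, using $\mad(G[N(v)])\le a$. Averaging over $v$, the term $\sum_v\sum_{u\in N(v)}d(u)=\sum_u d(u)^2\ge n d^2$ appears, and this is the source of the Shearer gain. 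Writing $n'$ and $d'$ for the order and average degree of $G-N[v]$, convexity of $x\mapsto xF(\cdot)$-type expressions (Jensen over the random choice of $v$) reduces the step to a one-variable inequality of the form
\[
 \log(1+\lambda)+(n-d-1)\,F\!\left(\frac{nd-2(d^2+d-\tfrac a2 d)}{n-d-1}\right)\ge nF(d).
\]
Linearizing in $1/n$ via the tangent line of the convex function $F$, this becomes
\[
 \log(1+\lambda)\ge (d+1)F(d)-\bigl(2d+2-(a+2)\bigr)\cdots F'(d)\text{-terms}.
\]
The natural way to handle it is as in Shearer: the chord definition of $F$ turns it into a statement comparing $\log(1+\lambda)$ with $g$ at $a+2$.

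The main obstacle is precisely this final scalar inequality. It should be where both the constant $a+2$ in the definition of $F$ and the range $\lambda\le 2(a+1)$ of \eqref{eq:IPSparseNbhdsBvdHKRange} enter. My expectation is that after substituting $g'(x)=W(\lambda x)/x$ and the chord identity, the $d$-dependence cancels to leave an inequality between $\log(1+\lambda)$ and expressions in $W(\lambda(a+2))$. That residual inequality should hold exactly when $\lambda\le2(a+1)$, possibly after using $Z_G\ge Z_{G-N[v]}\cdot(1+\lambda)$ in a slightly sharper form that also counts the independent sets of $G[N(v)]$. I would first try to verify it with the substitution $w=W(\lambda(a+2))$, so that $\lambda=w e^w/(a+2)$. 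This reduces everything to a one-variable calculus check, compared against the $a=0$, $\lambda\le2$ computation of Buys, van den Heuvel, and Kang. If the crude uniform choice of $v$ is too weak, I would instead choose $v$ of minimum degree, or weight the choice of $v$ by degree, as in Shearer's degree-sequence refinements.
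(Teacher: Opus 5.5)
Your induction step discards the term that makes the theorem true. Once you replace the deletion identity by $Z_G(\lambda)\ge(1+\lambda)Z_{G-N[v]}(\lambda)$, the step you must verify is $\psi(H)\le\log(1+\lambda)+\psi(H-N_H[v])$ for some $v$, where $\psi(H)=|V(H)|F(d(H))$. Iterating this along the chosen pivots, which form an independent set, forces $|V|F(d)\le\alpha(G)\log(1+\lambda)$. That is false in general. For fixed $\lambda$ the target is of order $|V|(\log d)^2/(2d)$, while random $d$-regular graphs (which can be taken triangle-free) have $\alpha(G)\le(2+o(1))|V|\log d/d$. So no pivot rule (uniform, minimum degree, or degree-weighted) can close the induction for large $d$; this is the factor-$\log d$ loss of counting subsets of one independent set.

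You can also see this in your own scalar inequality. With a uniform pivot and the tangent-plane bound, the condition you need is
\[
 \log(1+\lambda)\ge(d+1)F(d)+d(d-a-1)F'(d).
\]
Differentiating $(x-c)F(x)=g(x)-g(c)$ with $c=a+2$ gives $xF(x)+x(x-c)F'(x)=W(\lambda x)$. Hence the right-hand side equals $(dF(d))'+W(\lambda d)$. Since $(dF(d))'\to0$ and $W(\lambda d)\to\infty$, the condition fails for large $d$. With your overcounted edge term the right side changes only by $2dF'(d)\to0$, so that does not help. Nothing reduces to an inequality in $W(\lambda(a+2))$ alone.

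The missing idea is to keep both children of $Z_H=Z_{H-v}+\lambda Z_{H-N_H[v]}$. For a uniform pivot, apply the tangent-plane inequality of the convex map $(n',m')\mapsto n'F(2m'/n')$ to both $H-v$ and $H-N_H[v]$. The expected vertex and edge losses are $1$ and $d$ for $H-v$. For $H-N_H[v]$ they are $d+1$ and $\frac1n\bigl(\sum_u d_H(u)^2-3|\mathcal T(H)|\bigr)\ge d^2-ad/2$, using $3|\mathcal T(H)|\le a|E(H)|$. Then apply Jensen to $e^{X^{\mathrm{out}}}+\lambda e^{X^{\mathrm{in}}}$.

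The criterion becomes $e^{\beta_0}+\lambda e^{\beta_0-W(\lambda x)}\ge1$ with $\beta_0=-xF'(x)-F(x)$, that is, $\beta_0+\log(1+g'(x))\ge0$. For large $x$ the out-branch term $e^{\beta_0}$ is already $1-O(W(\lambda x)/x)$, and the in-branch only has to supply that small deficit. In your scheme, by contrast, the constant $\log(1+\lambda)$ has to pay for all of $W(\lambda x)$.

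The paper proves this genuinely $x$-dependent inequality using $\log(1+z)\ge2z/(2+z)$. It introduces an auxiliary function that is nonincreasing on $[0,c]$, nondecreasing on $[c,\infty)$, and vanishes at $c$. The range $\lambda\le2(a+1)$ enters as $4(a+1)-2\lambda\ge0$ at $x=0$.

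Two smaller slips:
\begin{enumerate}
\item Your edge count double counts the $d(v)$ edges at $v$. Deleting $N[v]$ removes $\sum_{u\in N(v)}d(u)-e(G[N(v)])$ edges, which averages to at least $d^2-ad/2$, not $d^2+d-ad/2$.
\item Convexity of $F$ does not follow from concavity of $g$ alone. It needs $g'''\ge0$, via $F(x)=\int_0^1g'(c+s(x-c))\,\mathrm ds$.
\end{enumerate}
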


The case $a=0$ is exactly the result of Buys, van den Heuvel, and Kang~\cite[Thm.~5]{BHK25} for triangle-free graphs of given average degree, and our proof method reduces precisely to theirs in this case.
For $a>0$, the permitted fugacity range extends beyond $[0,2]$.

For fractionally colorable neighborhoods we give the analog of \cref{thm:IPSparseNbhdsDegrees}.
Note that we do not pursue a version of \cref{thm:IPSparseNbhdsBvdHK} in this setting.

\begin{theorem}\label{thm:IPRColDegrees}
Fix $r\ge1$, and let $G=(V,E)$ have fractionally $r$-colorable neighborhoods.
Then, for every $\lambda>0$,
\begin{equation}\label{eq:IPRColDegreesBound}
 \log Z_G(\lambda)
 \ge
 \sum_{v\in V}
 \frac{\mathcal A\bigl(W(K(r)d(v)\log(1+\lambda))\bigr)}
      {K(r)d(v)}.
\end{equation}
The summand for $d(v)=0$ is interpreted by continuity as $\log(1+\lambda)$.
\end{theorem}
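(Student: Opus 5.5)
Write $\Lambda=\log(1+\lambda)$ and $K=K(r)$. For $d\ge0$ let
\[
\phi(d)=\frac{\mathcal A(W(Kd\Lambda))}{Kd},\qquad \phi(0)=\Lambda .
\]
The goal is $\log Z_G(\lambda)\ge\sum_v\phi(d(v))$. I will prove this by induction on $|V|$, using the standard deletion recursion
\[
Z_G(\lambda)=Z_{G-v}(\lambda)+\lambda Z_{G-N[v]}(\lambda).
\]
Following the spirit of the paper, I will prove a stronger weighted or multivariate statement, since the inductive step needs to average over a random choice of vertex. Averaging is where the fractional colorability of neighborhoods enters, as in Dhawan's method.

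\textbf{Inductive step.} Let $p=\lambda Z_{G-N[v]}/Z_G=\PP(v\in\bI)$ under the hard-core measure. Then
\[
\log Z_G=\log Z_{G-v}-\log(1-p),\qquad \log Z_G=\log Z_{G-N[v]}+\log\lambda-\log p .
\]
I take a convex combination of these two identities, in the style of Buys--van den Heuvel--Kang, and average over $v$ with suitable weights. This is essentially the Gibbs variational principle applied to a one-step random process. Apply the induction hypothesis to $G-v$ and $G-N[v]$, both of which inherit fractionally $r$-colorable neighborhoods. Deleting $N[v]$ lowers the degree of each vertex $u$ at distance two by $|N(u)\cap N(v)|$. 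Deleting $v$ lowers the degree of each neighbor by one.

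\textbf{Bounding the degree changes.} By convexity of $\phi$ (\cref{prop:summands}), the gain from a degree drop of $k$ at $u$ is at least $-k\phi'(d(u))$. Summing over $v$ with weights turns this into sums over paths $v\,x\,u$. These are controlled by writing, for each $x$, a distribution on independent sets of $G[N(x)]$ with marginals $1/r$. Mixing that distribution with the deletion process is where the constant $K(r)$ appears, exactly as $(1+\lambda)^a$ plays this role in \cref{thm:IPSparseNbhdsDegrees}.

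\textbf{Closing the induction.} After these reductions the inequality should become a pointwise inequality in $d(v)$ and the neighbours' degrees. To verify it, I use the defining property of $W$: with $y=W(Kd\Lambda)$ we have $ye^y=Kd\Lambda$, and $\phi'(d)=-y^2/(2Kd^2)$. With these, the one-variable inequality should reduce to the identity behind the local occupancy bound $\Lambda\ge \phi+\ldots$.

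\textbf{Main obstacle.} The hardest step is the middle one. I must choose the averaging weights over $v$ and the fractional coloring distribution in the neighborhood so that the second-order (distance-two) loss is bounded by $K(r)$ times the first-order term. Here I would try first to copy the proof of \cref{thm:LDRColNbdhs}: take weights proportional to $-\phi'(d(v))$, and take the neighborhood process to be the one defining $K(r)$. Then I would check that the resulting scalar inequality matches the definition of $K$ in \cref{sec:r-colorable}.
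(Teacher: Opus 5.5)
There is a genuine gap. The proposal never states the inductive hypothesis, and the step you flag as the main obstacle is where the whole proof lives.

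As sketched, you would induct on the degree functional $\sum_v\phi(d(v))$, average over pivots, and convert degree drops into $-\phi'$ terms by convexity. A functional of degrees alone has no natural way to capture what fractional colorability provides. In the certificate child $G-v-(N(v)\setminus J)$, every deleted $y\in N(v)\setminus J$ costs its full term $\phi(d(y))$. A survivor $y\in J$, by contrast, is credited only with its actual degree drop $1+|N(y)\cap N(v)|$, which is negligible when most of its neighbours lie outside $N(v)$. Nothing plays the role of ``zooming in'' on $J$.

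Averaging over pivots also leaves sums over paths $v\,y\,x$ of $-\phi'(d(x))$, coupled to the degrees of the middle vertices. You give no inequality controlling these for irregular degree sequences. So the claims that everything ``should'' reduce to a pointwise inequality, and that $K(r)$ ``appears'' when you mix in the coloring, are unsubstantiated. The paper does not prove the degree-sequence form by direct induction.

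The missing idea is to strengthen the statement to a potential with free vertex weights. Writing $\ell=\log(1+\lambda)$ and $K=K(r)$, one proves $\log Z_H(\lambda)\ge\Psi_H(w)$ for every induced subgraph $H$ and every $w>0$, where
\[
 \Psi_H(w)=\sum_{x\in V(H)}w(x)\Bigl(1+\log\frac{\ell}{w(x)}\Bigr)-K\sum_{xy\in E(H)}w(x)w(y).
\]
This is \cref{thm:IPRColMulti}. For this statement, any pivot $v$ works for any fixed $w$, so neither averaging nor convexity of $\phi$ is needed.

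Put $m=w(v)$, $p=1-e^{-m}$, $\theta=\max\{1-1/K,p\}$, $\alpha=p/\theta$ and $A=r/(1-\theta)$. In the log-sum identity use three children:
\begin{itemize}
\item $H-v$ with weights $w$, with weight $1-\alpha$;
\item $H_J=H-v-(N(v)\setminus J)$ with weights multiplied by $A$ on $J$, with weight $\alpha(1-\theta)$; this is legitimate since $Z_{H-v}\ge Z_{H_J}$;
\item $H-N[v]$, with weight $p$.
\end{itemize}
Then the terms balance as follows:
\begin{itemize}
\item each $x\in N(v)$ loses only $\alpha w(x)\log A$ from its vertex term;
\item edges at $v$ release $Km\,w(N(v))$;
\item edges from $N(v)$ to the rest keep their expected mass, since $(1-\theta)A/r=1$;
\item edges inside $N(v)$ vanish in the certificate child, because $J$ is independent.
\end{itemize}
The pivot's own term is paid for by \cref{lem:partition-support}, namely $h(1-e^{-m})+(1-e^{-m})\log\lambda\ge m(1+\log(\ell/m))$. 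The step then closes provided $\alpha\log A\le Km$. This is exactly where $K-1=\log(rK)$ is used, splitting into the cases $m\le\log K$ and $m\ge\log K$.

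The degree-sequence bound is then a corollary rather than an induction. The inequality $2w(u)w(v)\le w(u)^2+w(v)^2$ decouples the vertices. The coordinatewise optimum $w(v)=W(Kd(v)\ell)/(Kd(v))$ gives the summand $\mathcal A(W(Kd(v)\ell))/(Kd(v))$.
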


\noindent
Again, the summand is decreasing and convex in the degree (\cref{prop:summands}).
In particular, if $G=(V,E)$ has $V\ne\emptyset$ and average degree at most $d>0$, then
\[
 \frac1{|V|}\log Z_G(\lambda)
 \ge
 \frac{\mathcal A\bigl(W(K(r)d\log(1+\lambda))\bigr)}
      {K(r)d}.
\]
This gives degree-sequence and average-degree analogues of the bound derived from an occupancy fraction bound in~\cite{DKPS20Structure}.

\subsection{Applications}

Martinsson and Steiner's local Shearer bound~\cite{MS25}, which is a special case of \cref{thm:LDSparseNbdhs,thm:LDRColNbdhs}, has applications to bounding the fractional chromatic number of triangle-free graphs with a given number $n$ of vertices~\cite[Thm.~1.4]{MS25}, and in the setting of a given number $m$ of edges in addition~\cite[Thm.~1.5]{MS25}.
Our results extend these applications to graphs with bounded maximum average degree in neighborhoods and graphs with fractionally colorable neighborhoods, respectively.
Similarly, we obtain bounds on the fractional chromatic number in terms of the spectral radius in these settings, analogous to the corresponding result of Martinsson and Steiner~\cite[Thm.~5.3]{MS25}.
Given our strengthening of their theorem, the derivations are identical.
We omit the details.


\subsection{Organization}

\Cref{sec:demands} establishes \crefrange{thm:LDAsymptoticSparseNbhds}{thm:LDRColNbdhs} through a common induction argument.
\Cref{sec:partition} establishes \cref{thm:IPSparseNbhdsDegrees,thm:IPRColDegrees} through multivariate strengthenings.
A common inequality underlies both multivariate results, while a separate averaging argument in \cref{sec:partition} yields \cref{thm:IPSparseNbhdsBvdHK}.

\section{Local demands}\label{sec:demands}

\subsection{Proof overview}

Since weighted degree is unchanged when all weights are multiplied by the same positive constant, we normalize an initial weighting so that \(w(V(G))=1\) whenever convenient.
Fix a demand function \(g\) and a hereditary class of graphs.
For every graph \(G=(V,E)\) in the class and every positive weighting \(w\), we seek a distribution on \(\cI(G)\) whose marginal at \(v\) is at least \(g(d_w(v))\).
For the functions considered below, this is trivial when \(G\) has no vertices, and the proof proceeds by induction on \(|V|\).

For the induction step, let \(\delta(G)\in[0,1]\) be the least additive error such that, for every positive weighting \(w\), there is a distribution on \(\cI(G)\) whose marginal at every \(v\) is at least \(g(d_w(v))-\delta(G)\).
Standard compactness considerations give the existence of this minimum, and our goal is to prove that \(\delta(G)=0\).
By the induction hypothesis, we have \(\delta(G')=0\) for every proper induced subgraph \(G'\) of \(G\).
To bound \(\delta(G)\), consider the following construction of a random independent set in \(\cI(G)\).

An application-specific random experiment constructs an induced subgraph \(G'\subseteq G\), where we write \(G'=(V',E')\).
It also produces weights \(w'\in\RRpos^{V'}\) and an independent set \(S\in\cI(G)\) such that \(N_G[S]\cap V'=\emptyset\).
If \(G'\) is a proper induced subgraph of \(G\), then by induction we can sample \(I'\in\cI(G')\) from the distribution supplied for \(G'\), weights \(w'\), and demand function \(g\), and in this case we return \(S\cup I'\).
If \(G'=G\), then necessarily \(S=\emptyset\), and we instead use the distribution for the positive weights \(w'\), with additive error \(\delta(G)\) in the marginals, supplied by its definition.
Let the resulting distribution on \(\cI(G)\) be denoted by \(\mu\).
A careful analysis of its marginals bounds \(\delta(G)\); the technical heart of each application is to choose the experiment so that this bound forces \(\delta(G)=0\).

\subsection{General proof structure}

When the graph and weights must be explicit, we write
\[
 d_{G,w}(v)=\frac{w(N_G(v))}{w(v)}
\]
for the weighted degree of a vertex \(v\) in $G$.
For \(A\subseteq V(G)\), write
\[
 N_G(A)=\bigcup_{x\in A}N_G(x),
 \qquad
 N_G[A]=A\cup N_G(A).
\]

Fix a decreasing function \(g:[0,\infty)\to(0,1]\).
Though it is not formally required in the abstract setup, in applications we will also require that $g$ is convex.
For a graph $G=(V,E)$, define \(\delta(G)\) to be the least error \(\delta\in[0,1]\) such that, for every \(w\in\RRpos^{V}\), there is a distribution \(\mu\) on \(\cI(G)\) satisfying for all $v\in V$
\begin{equation}\label{eq:minimumError}
 \mu(v)\ge g(d_{G,w}(v))-\delta.
\end{equation}
This least value exists.
Indeed, the set of admissible errors is nonempty because it contains one.
Let \((\delta_j)_{j\in\NN}\) be a sequence of admissible errors decreasing to their infimum \(\delta\).
For each fixed \(w\), compactness of the probability simplex on the finite set \(\cI(G)\) gives a subsequential limit of the corresponding probability distribution, and that limit satisfies~\eqref{eq:minimumError} with error \(\delta\).
Since \(w\) was arbitrary, \(\delta\) is itself admissible.

We use the following argument during an induction.
A random experiment on \((G,w)\) produces an independent set \(S\in\cI(G)\), an induced subgraph \(G'\subseteq G\) such that
\[
 N_G[S]\cap V(G')=\emptyset,
\]
and positive weights \(w'\in\RRpos^{V(G')}\).
The equality \(G'=G\) is allowed and forces \(S=\emptyset\).
For a target vertex \(v\in V(G)\), define its random ideal selection probability by
\begin{equation}\label{eq:idealSelection}
 Y_v=
 \begin{cases}
  1,&v\in S,\\
  g(d_{G',w'}(v)),&v\in V(G'),\\
  0,&\text{otherwise}.
 \end{cases}
\end{equation}
The next lemma gives the structure of the inductive step.

\begin{lemma}\label{lem:LDInductionDriver}
Suppose that, for every proper induced subgraph \(G'\subsetneq G\) and all positive weights \(w'\in \RRpos^{V(G')}\), there is a distribution $\mu'$ on \(\cI(G')\) such that for $v\in V(G')$ we have \(\mu'(v) \ge g(d_{G',w'}(v))\).
Suppose further that there are constants \(h\ge0\) and \(\rho\in[0,1)\) such that, for every positive weighting \(w\in \RRpos^{V(G)}\), the random experiment on \((G,w)\) can be chosen so that, simultaneously for every \(v\in V(G)\),
\begin{equation}
 \EE Y_v\ge g(d_{G,w}(v))-h
\end{equation}
and \(\PP(G'=G)\le \rho\).
Then
\begin{equation}\label{eq:deltaContraction}
 \delta(G)\le\frac{h}{1-\rho}.
\end{equation}
\end{lemma}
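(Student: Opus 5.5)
The plan is to run the composite construction from the proof overview and read off a self-referential inequality for \(\delta(G)\). Write \(\delta=\delta(G)\). Fix an arbitrary positive weighting \(w\in\RRpos^{V(G)}\), and take the random experiment on \((G,w)\) given by the hypothesis. It produces \((S,G',w')\) with \(\EE Y_v\ge g(d_{G,w}(v))-h\) for every \(v\) and \(\PP(G'=G)\le\rho\).

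Conditional on the outcome, we build an independent set as follows.
\begin{itemize}
\item If \(G'\subsetneq G\), sample \(I'\sim\mu'\), the distribution for \((G',w')\) supplied by the hypothesis of the lemma.
\item If \(G'=G\), then \(S=\emptyset\). Sample \(I'\) from a distribution on \(\cI(G)\) that, for the weighting \(w'\), has marginals at least \(g(d_{G,w'}(v))-\delta\). Such a distribution exists by the definition of \(\delta\) as an attained minimum, which the paper has already justified via compactness.
\end{itemize}
In both cases output \(S\cup I'\). This set is independent in \(G\): \(S\) is independent, \(I'\) is independent in the induced subgraph \(G'\), and \(N_G[S]\cap V(G')=\emptyset\) guarantees there are no edges between \(S\) and \(I'\). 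Call the resulting distribution \(\mu\).

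Next we bound the marginals of \(\mu\). Fix \(v\) and condition on the outcome of the experiment. If \(v\in S\), the conditional probability that \(v\) is selected is \(1=Y_v\). If \(v\in V(G')\) and \(G'\ne G\), it is at least \(g(d_{G',w'}(v))=Y_v\). If \(G'=G\), it is at least \(Y_v-\delta\). In the remaining case \(v\) is selected with probability \(0=Y_v\), since \(v\notin S\cup V(G')\). Taking expectations gives
\[
 \mu(v)\ge \EE Y_v-\delta\,\PP(G'=G)\ge g(d_{G,w}(v))-h-\rho\delta .
\]
A small technical point: the experiment may have infinitely many outcomes, since \(w'\) is real-valued. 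The mixture is still a well-defined distribution on the finite set \(\cI(G)\) as long as the choice of distribution for each outcome is made measurably. If this is an issue, I would restrict to experiments with finitely many outcomes, as in the applications, or note that the marginal bound only uses conditional expectations.

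Since \(w\) was arbitrary, \(h+\rho\delta\) is an admissible error whenever it is at most \(1\); if it exceeds \(1\), then \(\delta\le1\le h+\rho\delta\) holds trivially. By minimality of \(\delta(G)\) we get \(\delta\le h+\rho\delta\). Because \(\rho<1\), this rearranges to \(\delta(G)\le h/(1-\rho)\), which is \eqref{eq:deltaContraction}.

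The only delicate step is the self-reference. The bound for the case \(G'=G\) must use \(\delta(G)\) itself, and it must be applied at the new weighting \(w'\) rather than \(w\). This works precisely because \(\delta(G)\) is defined uniformly over all positive weightings and the minimum is attained.
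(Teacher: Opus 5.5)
Your proposal is correct and follows the paper's proof essentially step for step: build the mixture, use the \(\delta(G)\)-approximate law at the new weighting \(w'\) on the event \(G'=G\), bound \(\mu(v)\ge\EE Y_v-\delta(G)\PP(G'=G)\), and handle the case \(h+\rho\delta(G)\ge1\) separately. The only differences are cosmetic: the paper first normalizes to \(\bar w=w/w(V(G))\) and does not raise your measurability remark.
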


\begin{proof}
The assertion is immediate when \(G\) is empty, so suppose that \(G\) is nonempty.
Fix arbitrary positive weights \(w\) on \(G\), and normalize them by setting \(\bar w=w/w(V(G))\).
Apply the random experiment to \((G,\bar w)\), producing \((S,G',w')\) and the ideal selection probabilities \(Y_v\) for \(v\in V(G)\).
After each outcome with \(G'\subsetneq G\), sample an independent set from the assumed distribution on \(\cI(G')\) and return its union with \(S\).
If \(G'=G\), use the distribution for the positive weights \(w'\) with additive error \(\delta(G)\) on the marginals.
This gives a distribution \(\mu\) on \(\cI(G)\).
By construction, for every \(v\in V(G)\),
\[
 \mu(v)\ge\EE Y_v-\delta(G)\PP(G'=G)
 \ge g(d_{G,\bar w}(v))-h-\rho\delta(G)
 =g(d_{G,w}(v))-h-\rho\delta(G).
\]
Since the construction works for every positive \(w\), the definition of \(\delta(G)\) gives
\[
 \delta(G)\le h+\rho\delta(G).
\]
Rearranging gives~\eqref{eq:deltaContraction}.
As a minor technical subtlety, the desired inequality follows from $\delta(G)\le 1$ if \(h+\rho\delta(G)\ge1\), otherwise \(h+\rho\delta(G)\) is an admissible error and it follows from the random construction detailed above.
\end{proof}

\subsection{Sparse neighborhoods}\label{sub:sparseNbhds}

In order to prove \cref{thm:LDAsymptoticSparseNbhds} we apply \cref{thm:LDSparseNbdhs}, but this requires an understanding of how bounded mad in neighborhoods implies bounded triangle density.

\begin{proposition}\label{prop:Theta}
The parameter \(\Theta(G)\) is finite, and it is zero exactly when \(G\) is triangle-free.
Moreover, if \(\mad(G[N(v)])\le a\) for every \(v\in V(G)\), then
\begin{equation}\label{eq:Theta-mad}
 \Theta(G)\le\frac{3a}{2}.
\end{equation}
The factor \(3/2\) is sharp.
\end{proposition}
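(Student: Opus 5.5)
The plan is to compare triangles and weighted edges apex by apex, using a fractional orientation of each neighborhood supplied by the mad hypothesis, and then to check sharpness on book graphs. The first two assertions are soft. If \(G\) is triangle-free then \(T_{G[A]}\equiv0\) for every \(A\), so \(\Theta(G)=0\). Conversely, if \(G\) contains a triangle \(\{x,y,z\}\), take \(A=\{x,y,z\}\) and unit weights; then \(6T/D=6/6=1>0\). Finiteness will follow from \eqref{eq:Theta-mad}, since every graph satisfies the hypothesis with \(a=|V(G)|\). So the real content is \eqref{eq:Theta-mad} and its sharpness. Since neighborhoods in \(G[A]\) are subgraphs of neighborhoods in \(G\), and \(\mad\) is monotone under subgraphs, it suffices to fix \(H=G[A]\) and \(w\in\RRpos^A\) and prove \(4T_H(w)\le a\,D_H(w)\).

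First I rewrite both sides as sums over vertices:
\[
 D_H(w)=\sum_{(u,z):\,uz\in F}w(u)^2w(z),\qquad
 3T_H(w)=\sum_{u\in A}w(u)\sum_{yz\in F(H[N(u)])}w(y)w(z).
\]
Here the first sum is over ordered pairs, so each edge contributes both \(w(u)^2w(z)\) and \(w(z)^2w(u)\). By the Hakimi/orientation characterization of maximum average degree, \(\mad(H[N(u)])\le a\) yields a fractional orientation of \(H[N(u)]\) in which every vertex has fractional in-degree at most \(a/2\). I then charge each triangle to ordered pairs \((u,z)\) with \(uz\in F\): at apex \(u\), the edge \(yz\) of \(H[N(u)]\) is split between its heads according to the orientation.

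In the unit-weight case this already gives the stronger bound \(6T\le aD\). Indeed, \(3T=\sum_u e(H[N(u)])\le\sum_u\frac a2 d(u)=\frac a2 D\). With weights, the crude choice is to use only the apex \(u\) of largest weight in each triangle and the bound \(w(u)w(y)w(z)\le w(u)^2w(z)\). This gives only \(T\le\frac a2D\), i.e.\ \(\Theta\le3a\), a factor \(2\) too weak.

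The main obstacle is recovering that factor of \(2\). My plan is to sort each triangle as \(w(x)\ge w(y)\ge w(z)\) and use two apexes rather than one, charging the triangle half to apex \(x\) and half to apex \(y\). At apex \(x\), the edge \(yz\) is charged to a term \(w(x)^2w(\cdot)\). At apex \(y\), the edge \(xz\) is charged to \(w(y)^2w(z)\) or to the term \(w(y)^2w(x)\), the latter being at least \(w(x)w(y)w(z)\) because \(w(y)\ge w(z)\). The risk is that charging into the lighter vertex \(z\) costs a factor \(w(x)/w(z)\). To handle this, I would choose the orientation of each \(H[N(u)]\) adapted to the weights (orient toward heavier endpoints where possible, via a weighted version of the orientation lemma or LP duality on \(\sum_{yz}w(y)w(z)\le\frac a2\cdot(\text{weighted charges})\)). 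I would then verify that each term \(w(u)^2w(z)\) of \(D\) receives total charge at most \(a/4\).

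For sharpness I use the book graph \(B_k\): an edge \(xy\) together with \(k\) pages adjacent to both \(x\) and \(y\). Every neighborhood is a star or an edge, so \(\mad\le a_k=2k/(k+1)\to2\). Put \(w(x)=w(y)=1\) and weight \(t\) on each page. Then \(T=kt\) and \(D=2+2kt(1+t)\), so \(6T/D\to3/(1+t)\) as \(kt\to\infty\). Letting \(t\to0\) and \(kt\to\infty\) simultaneously gives \(\Theta(B_k)/a_k\to3/2\).
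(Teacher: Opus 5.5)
Your reductions, the unit-weight computation, the triangle-free characterization, and the book-graph sharpness argument (the paper's, with $t=k^{-1/2}$) are fine. The gap is the step that carries all the content, $4T_H(w)\le aD_H(w)$: it is never proved, and the charging you outline cannot prove it. With a fixed half/half split between the heaviest apex $x$ and the middle apex $y$, the half placed at apex $y$ must be absorbed by the terms $w(y)^2w(\cdot)$ of $D$. Their total allowance is $\frac a4w(y)^2w(N(y))$. A single triangle with weights $(M,1,1)$, $M>1$, already defeats this. Here $a=1$ and apex $y$ must absorb $\frac12M$, but $\frac14\bigl(w(y)^2w(x)+w(y)^2w(z)\bigr)=\frac14(M+1)<\frac12M$. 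This is a shortfall in the total budget at apex $y$, not in how charge is distributed among its terms, so no weight-adapted orientation can repair it.

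The missing idea is that the split must be uneven, with the heavy apex paying for the middle one. Bound the share of apex $u$ by $\frac12w(u)^2\min\{w(y),w(z)\}$ rather than by half the triangle weight. For weights $r\ge s\ge t$, the shares of the two heaviest apices total $\frac12(r^2t+s^2t)\ge rst$ by AM--GM. At a fixed apex you then only need $2\sum_{yz\in E(H[N(u)])}\min\{w(y),w(z)\}\le a\,w(N(u))$. This follows from an arbitrary fractional orientation with in-degrees at most $a/2$, because $\min\{w(y),w(z)\}$ is at most every convex combination of $w(y)$ and $w(z)$; no adaptation to the weights is needed. The paper instead integrates $2|E(F[S_t])|\le a|S_t|$ over the level sets $S_t=\{x:w(x)\ge t\}$. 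Multiplying by $w(u)^2$ and summing over $u$ gives $\sum_{\text{triangles}}(r^2t+s^2t+t^2s)\le\frac a2D_H(w)$. Since the left side is at least $2T_H(w)$, this yields $4T_H(w)\le aD_H(w)$.
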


\begin{proof}
  Recall the definitions~\eqref{eq:intro-TD} the weighted triangle count $T$ and edge count $D$, as well as the definition of $\Theta(G)$ in~\eqref{eq:intro-Theta}.
For a triangle \(xyz\), the AM--GM inequality gives
\[
 6w(x)w(y)w(z)
 \le
 w(x)w(y)(w(x)+w(y))+w(x)w(z)(w(x)+w(z))
 +w(y)w(z)(w(y)+w(z)).
\]
An edge lies in at most \(|V(G)|-2\) triangles, so summation proves finiteness.
If \(G\) is triangle-free, every numerator defining \(\Theta(G)\) is zero.
Conversely, if \(xyz\) is a triangle, take \(A=\{x,y,z\}\) and any positive weights on \(A\); then the numerator is positive, so \(\Theta(G)>0\).

For any graph \(F\) with \(\mad(F)\le a\) and every \(r\in\RRnn^{V(F)}\),
\begin{equation}\label{eq:vector-mad}
 2\sum_{xy\in E(F)}\min\{r_x,r_y\}
 \le a\sum_{x\in V(F)}r_x.
\end{equation}
Indeed, integrate \(2|E(F[S_t])|\le a|S_t|\) over \(S_t=\{x:r_x\ge t\}\).
Fix \(A\subseteq V(G)\) and positive weights \(w\) on \(A\).
Apply~\eqref{eq:vector-mad} to \(G[N(v)\cap A]\) with \(r_x=w(x)\), multiply by \(w(v)^2\), and sum over \(v\in A\).
The right side is \(aD_{G[A]}(w)\).
A triangle with weights \(r\ge s\ge t\) contributes at least
\[
 2(r^2t+s^2t+t^2s)
 =2rst\left(\frac rs+\frac sr+\frac tr\right)
 \ge4rst
\]
to the left side.
Thus \(4T_{G[A]}(w)\le aD_{G[A]}(w)\), which proves \eqref{eq:Theta-mad}.

For sharpness, let \(B_k\) be the book graph formed by \(k\) triangles sharing one edge.
Its largest neighborhood maximum average degree is \(2k/(k+1)\to2\).
Give each endpoint of the common edge weight one and each other vertex weight \(k^{-1/2}\).
Then
\[
 T_{B_k}(w)=\sqrt{k},
 \qquad
 D_{B_k}(w)=2\sqrt{k}+4,
\]
so
\[
 \Theta(B_k)\ge\frac{6\sqrt{k}}{2\sqrt{k}+4}\longrightarrow3.
\]
On the other hand,~\eqref{eq:Theta-mad} and the preceding value of the largest neighborhood maximum average degree give \(\Theta(B_k)\le3k/(k+1)\to3\).
Hence \(\Theta(B_k)\to3\), and the ratio in~\eqref{eq:Theta-mad} tends to \(3/2\).
\end{proof}

\subsubsection{The sparse neighborhoods induction}

We first define the random experiment we use in this setting.
Fix \(\beta>0\), a nonempty graph \(G=(V,E)\) with \(\Theta(G)<\beta\), and positive weights \(w\in \RRpos^V\), normalized so that \(w(V)=1\).
The normalization is without loss of generality and simplifies the notation.
Define
\[
 n=|V|,\qquad
 d_z=d_{G,w}(z),\qquad
 g=f_{\beta+1}.
\]
The experiment uses a vector \(u\in[0,2n]^{V}\) which we construct below from \(G,w,\beta\).
For a fixed \(0<\eps<1\), define weights \(w'\) on \(G\) by
\begin{equation}\label{eq:sparse-tilt}
 w'(z)=w(z)e^{\eps u_z}
 \qquad(z\in V).
\end{equation}
No matter the choice of $G'$, we take these weights and implicitly make the restriction to the vertex set of $G'$.
The experiment then produces \((S,G',w')\) as follows:
\begin{itemize}
\item with probability \(1-\eps\), take
\[
 S=\emptyset,\qquad G'=G;
\]
\item with probability \(\eps\), choose a pivot vertex \(x\) with probability \(w(x)\), and take
\[
 S=\{x\},\qquad
 G'=G-N_G[x].
\]
\end{itemize}
In either case \(N_G[S]\cap V(G')=\emptyset\), and we have \(\PP(G'=G)=1-\eps\).
For a target vertex \(v\), the ideal selection probability from~\eqref{eq:idealSelection} therefore satisfies
\begin{align}
\EE Y_v
 ={}&(1-\eps)g(d_{G,w'}(v))
 +\eps w(v)+\eps\sum_{x\notin N_G[v]}w(x)
 g(d_{G-N_G[x],w'}(v)).
 \label{eq:sparse-experiment-value}
\end{align}
The analytic task is to choose \(u\) so that, uniformly over \(w\) and \(v\),
\[
 \EE Y_v\ge g(d_v)-r_{n,\beta}(\eps)
 \qquad\text{with}\qquad
 r_{n,\beta}(\eps)=o(\eps).
\]
\Cref{lem:LDInductionDriver} will then give \(\delta(G)\le r_{n,\beta}(\eps)/\eps\), and we can take a sequence with $\eps \to 0$.

The proof relies on two ingredients.
\Cref{lem:fm} supplies the needed analytic properties of \(g=f_{\beta+1}\), while \cref{lem:sparse-obstacle-data} below controls terms in the computation arising from triangles.
Their proofs are deferred until after the main calculation.

\begin{lemma}\label{lem:fm}
For every \(m\ge1\), \(f_m\) is continuous on \([0,\infty)\), positive, decreasing, and convex.
It is \(C^2\) on \((0,\infty)\) and satisfies
\begin{equation}\label{eq:fm-ode}
 x(x-m)f_m'(x)+(x+1)f_m(x)=1
 \qquad(x>0).
\end{equation}
For fixed \(x\ge0\), \(f_m(x)\) is continuous and nonincreasing in \(m\).
For fixed \(m\), as $x\to \infty$ we have 
\begin{equation}\label{eq:fm-asymptotic}
 f_m(x)\sim\frac{\log(x/m)}x\sim\frac{\log x}{x}.
\end{equation}
If \(g=f_m\) and \(F(x)=g(e^x)\), then as $x\to-\infty$ or \(x\to\infty\),
\begin{equation}\label{eq:fm-log-derivative}
 F'(x)=e^x g'(e^x)\longrightarrow0.
\end{equation}
In particular, \(F'\) is bounded and uniformly continuous on \(\RR\).
More precisely, if
\[
 \omega_m(\rho)
 :=\sup_{\substack{x\in\RR\\ |h|\le\rho}}
   |F'(x+h)-F'(x)|,
\]
then \(\omega_m(\rho)\to0\) as \(\rho\downarrow0\), and, for every \(d>0\) and \(h\in\RR\),
\begin{equation}\label{eq:fm-log-taylor}
 \left|g(de^h)-g(d)-h d g'(d)\right|
 \le |h|\omega_m(|h|).
\end{equation}
\end{lemma}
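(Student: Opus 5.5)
We work directly from the integral representation \eqref{eq:fmDef}. For \(x>0\) and \(t\in[0,1]\) the denominator \(m+(x-m)t=m(1-t)+xt\) is positive, since it is a convex combination of \(m\) and \(x\). Thus the integrand is positive and smooth in \(x\). Differentiating under the integral sign gives
\[
f_m'(x)=-\int_0^1 \frac{t(1-t)^{1/m}}{(m(1-t)+xt)^2}\,dt<0,
\qquad
f_m''(x)=2\int_0^1\frac{t^2(1-t)^{1/m}}{(m(1-t)+xt)^3}\,dt>0.
\]
This gives positivity, strict decrease, convexity and \(C^2\) regularity on \((0,\infty)\). Continuity at \(0\) follows by monotone convergence, since the integrand increases to \((1-t)^{1/m}/(m(1-t))\) as \(x\downarrow0\), and this has integral \(\int_0^1(1-t)^{1/m-1}/m\,dt=1\). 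This matches \(f_m(0)=1\), and convexity then extends to \([0,\infty)\).

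For monotonicity in \(m\), we write the integrand as \((1-t)^{1/m}/(m(1-t)+xt)\). Increasing \(m\) decreases \((1-t)^{1/m}\) (because \(1-t\le1\)) and increases the denominator, so the integrand is pointwise nonincreasing in \(m\). Continuity in \(m\) follows by dominated convergence, using the dominating function \((1-t)^{1/M}/(m_0(1-t))\) near a fixed \(m_0\), which is integrable.

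For the ODE \eqref{eq:fm-ode}, the plan is to integrate by parts. Set \(D=m(1-t)+xt\), so \(\partial_t D=x-m\), and write \(p=1/m\). We have \(x(x-m)f_m'=-\int x(x-m)t(1-t)^p/D^2\). The key identity is
\[
\partial_t\!\left(\frac{1}{D}\right)=-\frac{x-m}{D^2}.
\]
Integrating by parts against \(xt(1-t)^p\) gives
\[
x(x-m)f_m'=\Bigl[xt(1-t)^p/D\Bigr]_0^1-\int_0^1 \frac{x\bigl((1-t)^p-pt(1-t)^{p-1}\bigr)}{D}\,dt
=-xf_m+px\int_0^1\frac{t(1-t)^{p-1}}{D}\,dt ,
\]
where the boundary term vanishes because \(p>0\). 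It then remains to check that \(f_m+px\int t(1-t)^{p-1}/D=1\). Writing \(px\,t=p(D-m(1-t))\) reduces this to \(p\int(1-t)^{p-1}=1\), since the two remaining terms \(-\int(1-t)^p/D\) and \(f_m\) cancel. This algebra is routine but is where sign errors are likeliest, so I would double-check it against the explicit formula for \(f\) at \(m=1\).

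For the asymptotics \eqref{eq:fm-asymptotic}, the ODE gives \(f_m=(1-x(x-m)f_m')/(x+1)\), which is not immediately useful. Instead we estimate the integral directly. Substitute \(s=xt/m\), so that \(D=m(1-t+s)\). Most of the mass comes from \(t\in[m/x,\delta]\), where \((1-t)^{1/m}\approx1\) and \(D\approx xt\), so this range contributes \(\approx\int_{m/x}^{\delta}dt/(xt)=\log(\delta x/m)/x\). The region \(t<m/x\) contributes \(O(1/x)\), and the region \(t>\delta\) contributes \(O(1/(\delta x))\). Letting \(\delta\to0\) slowly then gives \(f_m(x)\sim\log(x/m)/x\sim\log x/x\).

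For \eqref{eq:fm-log-derivative}, we use \(F'(x)=e^xg'(e^x)\). From the formula for \(f_m'\), as \(y\to0\) we have \(yf_m'(y)\to0\), because \(f_m'\) stays bounded: its integrand is dominated by \(t(1-t)^{1/m-2}/m^2\), which is integrable since \(1/m-1>-1\). As \(y\to\infty\), the ODE gives \(yf_m'(y)=(1-(y+1)f_m(y))/(y-m)\). Since \((y+1)f_m(y)\sim\log y\), this is \(O(\log y/y)\to0\). Because \(F'\) is continuous and has limits \(0\) at \(\pm\infty\), it is bounded and uniformly continuous, so \(\omega_m(\rho)\to0\). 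Finally, \eqref{eq:fm-log-taylor} is the mean value theorem applied to \(F\) at \(\log d\): we have \(F(\log d+h)-F(\log d)-hF'(\log d)=h\,(F'(\xi)-F'(\log d))\) for some \(\xi\) between \(\log d\) and \(\log d+h\), whose absolute value is at most \(|h|\omega_m(|h|)\).

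The main obstacle is the precise asymptotic in \eqref{eq:fm-asymptotic}, which needs careful splitting of the integral. The integration by parts for \eqref{eq:fm-ode} is the second place where care is needed.
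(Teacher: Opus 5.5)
Most of your plan is sound. The derivative formulas under the integral, monotone convergence at $0$, the integration by parts for \eqref{eq:fm-ode} (the cancellation via $pxt=p(D-m(1-t))$ and $pm=1$ checks out), the three-range splitting for \eqref{eq:fm-asymptotic}, the $y\to\infty$ limit via the ODE, and the mean-value step for \eqref{eq:fm-log-taylor} all work. Your direct route is also more self-contained than the paper's, which cites Li--Rousseau--Zang for the regularity, the ODE, monotonicity in $m$ and the asymptotics. However, two steps fail as written.

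\textbf{Monotonicity in $m$.} Since $0\le 1-t\le 1$ and $1/m$ decreases as $m$ grows, $(1-t)^{1/m}$ \emph{increases} with $m$. You use this correct direction yourself when you dominate by $(1-t)^{1/M}$. So numerator and denominator both increase, and the integrand is genuinely not pointwise monotone. For example, at $x=0.01$, $t=0.99$ it is about $0.50$ for $m=1$ but about $3.34$ for $m=2$. At $x=0$ all the integrals equal $1$ while the integrands $\frac1m(1-t)^{1/m-1}$ differ, so no pointwise comparison in the variable $t$ can succeed. A fix is the substitution $1-t=u^m$, which gives
\[
 f_m(x)=\int_0^1\frac{\mathrm du}{1+\frac xm\,(u^{-m}-1)} .
\]
Writing $u=e^{-a}$ with $a>0$, we have $(u^{-m}-1)/m=a\,(e^{am}-1)/(am)$, which is nondecreasing in $m$ because $(e^y-1)/y$ is increasing. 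So this integrand is pointwise nonincreasing in $m$. Alternatively, cite \cite[Lem.~2]{LRZ02} as the paper does.

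\textbf{Behavior of $f_m'$ near $0$.} $f_m'$ is not bounded near $0$. For $m=1$ the explicit formula gives $f'(x)=\log x+2+o(1)$, and for $m>1$ one finds $|f_m'(x)|\asymp x^{1/m-1}$. Your dominating function $t(1-t)^{1/m-2}/m^2$ has exponent $1/m-2\le -1$, so it is not integrable; you wrote $1/m-1$. The conclusion $yf_m'(y)\to0$ as $y\downarrow0$ is still true but needs a different argument. For instance, with $D=m(1-t)+xt$,
\[
 x|f_m'(x)|=\int_0^1\frac{xt}{D}\cdot\frac{(1-t)^{1/m}}{D}\,\mathrm dt .
\]
Here $0\le xt/D\le 1$ tends to $0$ for each $t<1$, and $(1-t)^{1/m}/D\le (1-t)^{1/m-1}/m$ is integrable, so dominated convergence applies. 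The paper instead uses convexity: with $k=-g'$ nonincreasing, $xk(x)\le 2\int_{x/2}^{x}k(y)\,\mathrm dy=2\bigl(g(x/2)-g(x)\bigr)$. This tends to $0$ at both ends, by continuity at $0$ and by \eqref{eq:fm-asymptotic}, so it handles $x\to0$ and $x\to\infty$ in one stroke.
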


We continue following Martinsson and Steiner's approach, extending it to graphs with triangles where necessary.
We use the convexity of $g$ to combine the terms in~\eqref{eq:sparse-experiment-value}, and then Taylor expand around $\eps=0$.

First, note that if the target vertex $v$ is isolated, then $\EE Y_v=1 = g(0)$ by direct computation, and the desired inequality holds.
We therefore assume that $v$ is nonisolated.

Since
\[ 1-\eps + \eps \sum_{x\notin N_G[v]} w(x) = 1 - \eps w(N_G[v]), \] by the convexity of \(g\) we have
\begin{equation}\label{eq:sparse-jensen-bound}
 \EE Y_v
 \ge \eps w(v)+\left(1-\eps w(N_G[v])\right)g(t_\eps),
\end{equation}
where
\[
t_\eps = \frac{(1-\eps)w'(N_G(v)) + \eps\sum_{x\notin N_G[v]}w(x)w'(N_{G-N_G[x]}(v))}{w'(v)(1-\eps w(N_G[v]))}.
\]
Expanding the numerator of $t_\eps$, we have
\begin{align*}
  &(1-\eps)w'(N_G(v)) + \eps\sum_{x\notin N_G[v]}w(x)w'(N_{G-N_G[x]}(v)) \\
  &= \sum_{y\in N_G(v)}(1-\eps)w'(y) + \sum_{x\notin N_G[v]} \eps w(x) \sum_{y\in N_G(v)\setminus N_G[x]} w'(y) \\
  &= \sum_{y\in N_G(v)}\left((1-\eps) + \sum_{x\in V\setminus (N_G[v]\cup N_G[y])}\eps w(x)\right)w'(y) \\
  &= \sum_{y\in N_G(v)}\left(1-\eps w(N_G[v]\cup N_G[y])\right)w'(y).
\end{align*}
For $y\in N_G(v)$ we have $N_G[v]\cup N_G[y]=N_G(v)\cup N_G(y)$, and by inclusion-exclusion we then have
\begin{align*}
  t_\eps &= \frac{1}{w'(v)} \sum_{y\in N_G(v)} \frac{1-\eps w(N_G(v)) - \eps w(N_G(y)) + \eps w(N_G(v)\cap N_G(y))}{1-\eps w(N_G[v])}w'(y)
\end{align*}
The term $\eps w(N_G(v)\cap N_G(y))$ is the new term that arises in the presence of triangles; it is zero if $v$ is contained in no triangles.
Since $w'(y)=w(y)e^{\eps u_y}$, Taylor expansion at \(\eps=0\) gives
\begin{equation}\label{eq:sparse-t-first-variation}
\begin{aligned}
 t_\eps
 ={}&d_v+\frac{\eps}{w(v)}\bigg(
  \sum_{y\in N_G(v)}w(y)\bigl(u_y-w(N_G(y))\bigr)
  -w(N_G(v))u_v \\[-2mm]
 &\hspace{24mm}{}+w(v)w(N_G(v))
  +\sum_{y\in N_G(v)}w(y)w(N_G(v)\cap N_G(y))
 \bigg)
 +O_n(d_v\eps^2).
\end{aligned}
\end{equation}
For brevity, we write $t_\eps = d_v + \eps t'_0 + O_n(d_v\eps^2)$, where $t'_0$ is the coefficient of $\eps$ in the expansion above.
To see that the remainder is uniform, write
\[
 r_y=\frac{w(y)}{w(N_G(v))},\qquad
 s_v=w(N_G[v]),\qquad
 a_{v,y}=w(N_G(v)\cup N_G(y)).
\]
The exact expression above can then be written as
\[
 \frac{t_\eps}{d_v}
 =\sum_{y\in N_G(v)}r_y e^{\eps(u_y-u_v)}
   \frac{1-\eps a_{v,y}}{1-\eps s_v}.
\]
Here \(0\le s_v\le a_{v,y}\le1\) and \(|u_y-u_v|\le2n\).
For \(0\le\eps\le1/2\), each summand has second derivative bounded by a constant depending only on \(n\).
Taylor's theorem and \(\sum_yr_y=1\) therefore give the stated \(O_n(d_v\eps^2)\) remainder uniformly over (normalized) weights \(w\), all nonisolated targets $v$, and \(u\in[0,2n]^{V}\).

We now give names to the two terms that determine the choice of $u$.
The final sum in~\eqref{eq:sparse-t-first-variation} is
\[
 \sum_{y\in N_G(v)}w(y)w(N_G(v)\cap N_G(y))
 =2\sum_{yz\in E(G[N_G(v)])}w(y)w(z).
\]
For \(z\in V\), to handle the term involving triangles we set
\begin{align}
 T_z&=2\sum_{xy\in E(G[N_G(z)])}w(x)w(y),
 \label{eq:rooted-T}
\end{align}
so that triangles contribute \(T_v/w(v)\) to \(t'_0\).

The form of~\eqref{eq:sparse-t-first-variation} suggests taking \(u_z=w(N_G(z))+\eta_z\) for some triangle-specific correction $\eta\in\RR^V$ to Martinsson and Steiner's weight drift.
Define the weighted Laplacian \(L\) of \(G\) by
\[
 (L\eta)_z
 =\sum_{y\in N_G(z)}w(z)w(y)(\eta_z-\eta_y).
\]
The identity
\[
 \sum_{y\in N_G(v)}w(y)\eta_y-w(N_G(v))\eta_v
 =-\frac{(L\eta)_v}{w(v)}
\]
reduces the coefficient in~\eqref{eq:sparse-t-first-variation} to
\begin{equation}\label{eq:sparse-t-compensated}
 t'_0
 =\frac{T_v}{w(v)}+w(v)d_v(1-d_v)
  -\frac{(L\eta)_v}{w(v)^2}.
\end{equation}
To help match terms with the differential equation for \(g=f_{\beta+1}\), we write this in terms of
\begin{equation}\label{eq:sparse-p}
 p_z:=w(z)T_z-\beta w(z)^3d_z.
\end{equation}
Then~\eqref{eq:sparse-t-compensated} becomes
\begin{equation}\label{eq:sparse-t-defect}
 t'_0
 =\frac{p_v-(L\eta)_v}{w(v)^2}
  +w(v)d_v(\beta+1-d_v).
\end{equation}
The first term is handled by \cref{lem:sparse-obstacle-data} below, and the second is handled by the differential equation for \(g\).
To compensate for the first term, it turns out to be enough to construct \(\eta\ge0\) such that \(L\eta\ge p\).
This would imply
\begin{equation}\label{eq:sparse-t-variation-bound}
 t'_0
 \le w(v)d_v(\beta+1-d_v),
\end{equation}
which lets us continue the computation via the differential equation.
The following lemma guarantees the existence of such a vector \(\eta\) under the condition $\Theta(G)<\beta$.
\begin{lemma}
\label{lem:sparse-obstacle-data}
There is a vector \(\eta\in\RRnn^{V(G)}\), zero at every isolated vertex, such that
\begin{equation}\label{eq:sparse-compensator}
 L\eta\ge p.
\end{equation}
Moreover, the choice
\begin{equation}\label{eq:sparse-direction}
 u_z=w(N_G(z))+\eta_z
 \qquad(z\in V(G))
\end{equation}
satisfies
\begin{equation}\label{eq:sparse-direction-bound}
 0\le u_z\le2n
 \qquad(z\in V(G)).
\end{equation}
\end{lemma}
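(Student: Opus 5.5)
The plan is to treat \(L\eta\ge p\) as a linear feasibility problem and get existence from Farkas' lemma, using the hypothesis \(\Theta(G)<\beta\) only through a global identity on each component. The bound on \(u\) would then come from choosing \(\eta\) to be the least feasible vector and arguing about its level sets. That second part is the step I have not closed.

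\textbf{Key identity.} For any vertex set \(A\), summing \(p\) gives triangle and edge quantities. For \(z\in A\), \(w(z)T_z\) counts each triangle through \(z\) with weight \(2w(x)w(y)w(z)\). Each triangle is seen from its three vertices, so
\[
 \sum_{z\in A}w(z)T_z=6T_{G[A]}(w).
\]
Also \(w(z)^3d_z=w(z)^2w(N_G(z))\), so summing over \(A\) counts each edge \(xy\) as \(w(x)w(y)(w(x)+w(y))\). Hence
\[
 \sum_{z\in A}w(z)^3d_z=D_{G[A]}(w).
\]
This needs \(A\) closed under neighbors, e.g.\ a union of components. For a component \(C\) with at least one edge, \(D_{G[C]}(w)>0\), and \(\Theta(G)<\beta\) gives
\[
 \sum_{z\in C}p_z=6T_{G[C]}(w)-\beta D_{G[C]}(w)<0 .
\]
At an isolated vertex \(z\) we have \(T_z=0\) and \(d_z=0\), so \(p_z=0\).

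\textbf{Existence.} By Farkas' lemma, the system \(L\eta\ge p\), \(\eta\ge0\) is infeasible exactly when there is \(y\ge0\) with \(y^{\mathsf T}L\le0\) and \(y^{\mathsf T}p>0\). Since \(L\) is symmetric with zero column sums on each component, \(\sum_{z\in C}(Ly)_z=0\). Together with \(Ly\le0\) this forces \(Ly=0\), so \(y\) is constant on each component. Then
\[
 y^{\mathsf T}p=\sum_C y_C\sum_{z\in C}p_z\le0
\]
by the identity, a contradiction. So \(\eta\ge0\) with \(L\eta\ge p\) exists. Since \(p=0\) at isolated vertices and \(L\) has zero rows there, we can set \(\eta\) to zero at isolated vertices.

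\textbf{Choosing \(\eta\).} Take the least feasible \(\eta\). The feasible set is closed under coordinatewise minimum: if \(\eta\) and \(\eta'\) are feasible and \(\eta_z\le\eta'_z\), then
\[
 \bigl(L\min(\eta,\eta')\bigr)_z=\sum_{y\in N_G(z)}w(z)w(y)\bigl(\eta_z-\min(\eta_y,\eta'_y)\bigr)\ge(L\eta)_z\ge p_z .
\]
The set is also closed and bounded below, so a least element exists. This gives \(\eta\ge0\), and \(u_z=w(N_G(z))+\eta_z\ge0\). Since \(w(V)=1\), the upper bound \(u_z\le 2n\) reduces to \(\eta_z\le 2n-1\).

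\textbf{Upper bound on \(\eta\) (main obstacle).} I would use a level-set argument on the least solution. Let \(S\) be the set where \(\eta\) is at its maximum level above the next value, with gap \(\gamma\). By minimality, lowering \(\eta\) uniformly on \(S\) must violate a constraint. Constraints outside \(S\) only improve, since their neighbors in \(S\) decrease. So the binding constraints lie in \(S\), and lowering \(\eta\) on \(S\) only decreases \(\eta_z-\eta_y\) across edges leaving \(S\). I would then:
\begin{itemize}
\item sum the constraints over \(S\);
\item compare with \(\sum_{z\in S}p_z\), expanded using the \(\Theta\) hypothesis on \(G[S]\) plus boundary triangle terms bounded by \(w(z)w(N_G(z))^2\le w(z)\);
\item conclude that each gap between consecutive distinct values of \(\eta\) is at most about \(2\).
\end{itemize}
Since \(\eta\) takes at most \(n\) distinct values and its minimum on each component is \(0\), this would give \(\eta_z\le 2(n-1)\) and hence \(u_z\le2n\). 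The delicate point is that the per-gap bound must not depend on the weights, even though some weights may be tiny.

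If the level-set argument fails, my fallback is different. The argument above only uses \(u\) bounded by a constant depending on \(n\) and \(\beta\), so I would normalize \(\eta\) by a weight-independent potential bound or by compactness over normalized weights. The constant \(2n\) would then be replaced by some \(C_{n,\beta}\). That only changes the \(O_n\) constants in the Taylor remainder.
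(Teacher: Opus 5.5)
Your existence argument is correct and differs from the paper's. Farkas' lemma together with $\sum_{z\in C}p_z=6T_{G[C]}(w)-\beta D_{G[C]}(w)<0$ replaces the coercivity argument for the obstacle functional, and the least feasible $\eta$ is a valid substitute for the paper's minimizer.

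The gap is the bound $u_z\le 2n$. This is half the statement, and it is what makes the Taylor remainder in \cref{lem:sparse-tilted-deletion} uniform over normalized weights. Two ingredients are missing.

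First, summing $(L\eta)_z\ge p_z$ over an upper level set $S$ gives $\sum_{z\in S}p_z\le\sum_{xy\in E,\ x\in S,\ y\notin S}w(x)w(y)(\eta_x-\eta_y)$. That is a lower bound on the cut flow, which is the wrong direction for bounding the gap. Your remark that ``lowering $\eta$ uniformly on $S$ must violate a constraint'' only makes one constraint in $S$ tight. You need tightness at every $z\in S$, i.e.\ $\eta_z((L\eta)_z-p_z)=0$. For the least solution this follows by lowering a single coordinate: if $\eta_z>0$ and $(L\eta)_z>p_z$, a small decrease of $\eta_z$ stays feasible, because it only increases $(L\eta)_y$ at neighbors $y$. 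Work inside a component $C$, so the cut is nonempty. Tightness then gives $\Delta\,c_C(S)\le\sum_{z\in S}p_z$, where $\Delta$ is the gap below $S$ and $c_C(S)$ is the cut weight.

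Second, and this is the real missing idea, you need an upper bound on $\sum_{z\in S}p_z$ proportional to $c_C(S)$. Bounding the boundary triangle terms per vertex by $w(z)w(N_G(z))^2\le w(z)$ only yields $\Delta\le w(S)/c_C(S)$. That is unbounded as the weights vary, which is exactly the dependence you flagged. The paper's argument runs as follows.
\begin{itemize}
\item It keeps $6T_{G[S]}(w)-\beta D_{G[S]}(w)\le 0$ from the $\Theta$ hypothesis and discards the negative crossing-edge terms.
\item A triangle meeting both sides of the cut has exactly two cut edges and contributes $2$ or $4$ times its weight product.
\item Charging half of that contribution to each of its two cut edges, a cut edge $xy$ receives at most $2w(x)w(y)w(N_G(x)\cap N_G(y))\le 2W_C\,w(x)w(y)$, with $W_C=w(C)\le 1$.
\end{itemize}
Hence $\sum_{z\in S}p_z\le 2W_C\,c_C(S)$, so each gap is at most $2W_C$. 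With minimum $0$ on $C$, this gives $\eta_z\le 2W_C(|C|-1)$ and $u_z\le(2|C|-1)W_C\le 2n$.

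Your fallback does not close this either. It would not give the stated constant $2n$. More importantly, compactness is unavailable: normalized positive weights form a noncompact set, the bound must hold uniformly over them, and Farkas' lemma gives no quantitative control on $\eta$.
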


\noindent
Assuming \cref{lem:fm,lem:sparse-obstacle-data}, the following lemma completes the proof of the main inequality for the sparse setting.

\begin{lemma}
\label{lem:sparse-tilted-deletion}
For the choice of \(u\) in~\eqref{eq:sparse-direction}, there is a function \(r_{n,\beta}:(0,1)\to[0,\infty)\), depending only on \(n\) and \(\beta\), such that \(r_{n,\beta}(\eps)=o(\eps)\) and, for every target vertex \(v\),
\begin{equation}\label{eq:sparse-tilted-deletion}
 \EE Y_v\ge g(d_v)-r_{n,\beta}(\eps).
\end{equation}
\end{lemma}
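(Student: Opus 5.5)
We start from the Jensen bound \eqref{eq:sparse-jensen-bound}, which is valid for every nonisolated target $v$ (isolated targets were already handled exactly). We then expand $g(t_\eps)$ to first order in $\eps$ using the log-scale Taylor estimate \eqref{eq:fm-log-taylor}. Write $t_\eps=d_v e^{h}$ with $h=\log(t_\eps/d_v)$. From the exact product form of $t_\eps/d_v$ displayed after \eqref{eq:sparse-t-first-variation}, together with $u\in[0,2n]^V$ (\cref{lem:sparse-obstacle-data}) and $0\le s_v\le a_{v,y}\le 1$, we get $|h|\le C_n\eps$ for $\eps\le1/2$. We also get $h=\eps t_0'/d_v+O_n(\eps^2)$, uniformly in $w$ and $v$. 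Then \eqref{eq:fm-log-taylor} gives
\[
 g(t_\eps)\ge g(d_v)+h\,d_vg'(d_v)-C_n\eps\,\omega_{\beta+1}(C_n\eps).
\]
Since $F'=d\,g'(d)$ is bounded by \cref{lem:fm}, we may replace $h\,d_vg'(d_v)$ by $\eps t_0' g'(d_v)$ at a cost of $O_n(\eps^2)$. The point of working on the log scale is that all errors become uniform in $d_v$, even though $d_v$ can be arbitrarily small or large relative to $n$.

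Next, since $g'\le0$ and \cref{lem:sparse-obstacle-data} gives $L\eta\ge p$, \eqref{eq:sparse-t-variation-bound} lets us replace $t_0'$ by its upper bound: $\eps t_0'g'(d_v)\ge \eps w(v)d_v(\beta+1-d_v)g'(d_v)$. Multiplying out $(1-\eps w(N_G[v]))g(t_\eps)$ and using $g\le1$ and $|F'|$ bounded to absorb cross terms into $O_n(\eps^2)$, we obtain
\[
 \EE Y_v\ge g(d_v)+\eps\Bigl[w(v)-w(N_G[v])g(d_v)+w(v)d_v(\beta+1-d_v)g'(d_v)\Bigr]-r_{n,\beta}(\eps).
\]
Here $r_{n,\beta}(\eps)=C_n\eps\,\omega_{\beta+1}(C_n\eps)+C_n\eps^2=o(\eps)$.

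It remains to show that the bracket vanishes. Using $w(N_G[v])=w(v)(1+d_v)$, the bracket equals
\[
 w(v)\Bigl[1-(1+d_v)g(d_v)-d_v(d_v-(\beta+1))g'(d_v)\Bigr],
\]
which is exactly zero by the ODE \eqref{eq:fm-ode} with $m=\beta+1$. This yields \eqref{eq:sparse-tilted-deletion}.

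The main obstacle is uniformity of the remainder. The constants must depend only on $n$ and $\beta$, and not on $w$, $v$, or $d_v$, even though $d_v$ can approach $0$ or $\infty$. We handle this by controlling $h$ (rather than $t_\eps-d_v$) via the product formula, and by using the uniform continuity of $F'$ from \cref{lem:fm}. One must also check that the second-order term of $h$ is $O_n(\eps^2)$ without any $d_v$ factor. This holds because $t_\eps/d_v$ is a convex combination of smooth functions whose derivatives are bounded in terms of $n$.
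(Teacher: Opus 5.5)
Your proposal is correct and follows the paper's proof essentially step for step. You start from the Jensen bound \eqref{eq:sparse-jensen-bound}, expand $g(t_\eps)$ on the logarithmic scale via \eqref{eq:fm-log-taylor} with $h=\log(t_\eps/d_v)=O_n(\eps)$, use $g'\le0$ together with \eqref{eq:sparse-t-variation-bound}, and conclude with the ODE \eqref{eq:fm-ode} for $m=\beta+1$. Two cosmetic points: your constants $C_n$ also depend on $\beta$ (through $\sup|F'|$ and the factor $\beta+1$), which is permitted; and for $\eps>1/2$ you should simply set $r_{n,\beta}(\eps)=1$, as the paper does.
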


\begin{proof}
Choose \(u\) as in~\eqref{eq:sparse-direction}.
The desired bound is exact when \(v\) is isolated, so suppose that \(v\) is nonisolated.
After dividing the coefficient in \eqref{eq:sparse-t-first-variation} by \(d_v\), its four terms have absolute values at most \(2n\), \(2n\), \(1\), and \(1\), respectively.
Thus
\[
 |t'_0|\le(4n+2)d_v.
\]
Together with the uniform remainder in \eqref{eq:sparse-t-first-variation}, this yields
\begin{equation}\label{eq:sparse-log-expansion}
 \log\frac{t_\eps}{d_v}
 =\eps\frac{t'_0}{d_v}+O_n(\eps^2)
 =O_n(\eps).
\end{equation}
Here and below, all asymptotic error terms are uniform over the graph, the normalized weight function, and the target vertex.
Apply \eqref{eq:fm-log-taylor} with \(d=d_v\) and \(h=\log(t_\eps/d_v)\).
Equation~\eqref{eq:sparse-log-expansion} and the boundedness of \(xg'(x)\) give
\begin{equation}\label{eq:sparse-composed-expansion}
 g(t_\eps)
 =g(d_v)+\eps g'(d_v)t'_0+o_{n,\beta}(\eps),
 \qquad
 |g'(d_v)t'_0|=O_{n,\beta}(1).
\end{equation}

Substituting~\eqref{eq:sparse-composed-expansion} into \eqref{eq:sparse-jensen-bound}, and using \(w(N_G[v])=w(v)(1+d_v)\), gives
\begin{align*}
 \EE Y_v
 &\ge g(d_v)
 +\eps\left[
   w(v)-w(N_G[v])g(d_v)+g'(d_v)t'_0
 \right]
 +o_{n,\beta}(\eps)\\
 &\ge g(d_v)
 +\eps w(v)\left[
   1-(1+d_v)g(d_v)
   +d_v(\beta+1-d_v)g'(d_v)
 \right]
 +o_{n,\beta}(\eps)\\
 &=g(d_v)+o_{n,\beta}(\eps).
\end{align*}
The second inequality uses \(g'(d_v)\le0\) and \eqref{eq:sparse-t-variation-bound}; the equality is \eqref{eq:fm-ode} with \(m=\beta+1\).

The last error term is uniform, so its absolute value is bounded by some \(r_{n,\beta}(\eps)=o(\eps)\) for all sufficiently small \(\eps\).
Define \(r_{n,\beta}(\eps)=1\) for the remaining values of \(\eps\); then~\eqref{eq:sparse-tilted-deletion} follows there from \(0\le\EE Y_v\) and \(g\le1\).
This proves the lemma.
\end{proof}

Completing the proof of \cref{thm:LDSparseNbdhs} from here is a simple matter of stating the induction and taking limits.

\begin{proof}[Proof of \cref{thm:LDSparseNbdhs}]
Fix first \(\beta>b\).
We prove by induction on \(|V(G)|\) that every graph \(G\) with \(\Theta(G)<\beta\) and every positive weighting \(w\) admit a law with marginals at least \(g(d_{G,w}(v))\), where \(g=f_{\beta+1}\).
The base case of the empty graph is immediate.
The parameter \(\Theta\) is hereditary, so the induction hypothesis applies to every proper induced subgraph \(G'\) of \(G\).
By scale invariance, it suffices at the induction step to consider normalized weights which sum to $1$.

Let \(n=|V(G)|>0\).
For each \(0<\eps<1\) and each normalized \(w\), \cref{lem:sparse-tilted-deletion} gives \(\EE Y_v\ge g(d_{G,w}(v))-r_{n,\beta}(\eps)\), while \(\PP(G'=G)=1-\eps\).
Apply \cref{lem:LDInductionDriver} with \(h=r_{n,\beta}(\eps)\) and \(\rho=1-\eps\).
It gives
\[
 \delta(G)\le\frac{r_{n,\beta}(\eps)}{\eps}.
\]
Since \(r_{n,\beta}(\eps)=o(\eps)\), letting \(\eps\downarrow0\) gives \(\delta(G)=0\), completing the induction.

Apply this auxiliary result to the graph \(G\) and weighting \(w\) from the theorem.
Choose a sequence \(\beta_j\downarrow b\) with \(\beta_j>b\), and let \(\mu_j\) be the corresponding distributions on $\cI(G)$.
Along a subsequence, the laws converge in the probability simplex on \(\cI(G)\).
The parameter continuity in \cref{lem:fm} gives
\[
 f_{\beta_j+1}(d_{G,w}(v))
 \longrightarrow f_{b+1}(d_{G,w}(v)).
\]
The limiting distribution gives~\eqref{eq:LDSparseNbhdsMarginal}.
\end{proof}

\noindent
The asymptotic statement in \cref{thm:LDAsymptoticSparseNbhds} follows from the preceding theorem, \cref{prop:Theta}, and the asymptotics of \(f_{b+1}\) in \cref{lem:fm}.

\begin{proof}[Proof of \cref{thm:LDAsymptoticSparseNbhds}]
By \cref{prop:Theta}, we can apply \cref{thm:LDSparseNbdhs} with \(b=3a/2\).
Equation~\eqref{eq:fm-asymptotic} gives
\[
 f_{b+1}(d)= (1-o(1))\frac{\log d}{d}.
\]
The result now follows from \cref{thm:LDSparseNbdhs}.
\end{proof}

\subsubsection{Proofs of \cref{lem:fm,lem:sparse-obstacle-data}}

\begin{proof}[Proof of \cref{lem:fm}]
The function $f_m$ is exactly the one studied by Li--Rousseau--Zang.
They prove that it is completely monotone on \((0,\infty)\) and satisfies~\eqref{eq:fm-ode} \cite[Lem.~1]{LRZ02}.
In particular, it is positive, decreasing, convex, and \(C^2\) there.
They also prove that \(f_m(x)\) decreases with \(m\) for each fixed \(x>0\)~\cite[Lem.~2]{LRZ02}, and record \eqref{eq:fm-asymptotic}.

It remains to check the endpoint and continuity assertions and the estimates in logarithmic coordinates.
Fix \(m\ge1\).
At \(x=0\), the integrand in~\eqref{eq:fmDef} is
\[
 \frac1m(1-t)^{1/m-1},
\]
whose improper integral is one.
Since \(m+(x-m)t=m(1-t)+xt\ge m(1-t)\), this integrable function dominates the integrand for every \(x\ge0\).
Dominated convergence proves continuity at zero and extends the stated monotonicity and convexity to \([0,\infty)\).

For fixed \(x>0\), the integrand is pointwise continuous in \(m\).
Locally in \(m\), its denominator \(m(1-t)+xt\) is bounded away from zero, so dominated convergence proves continuity in \(m\).
At \(x=0\), this is immediate from \(f_m(0)=1\).

Finally, let \(g=f_m\), \(F(s)=g(e^s)\), and \(k=-g'\).
Since \(g\) is decreasing and convex, \(k\) is nonnegative and nonincreasing, so for every \(x>0\),
\[
 xk(x)
 \le2\int_{x/2}^{x}k(y)\,\mathrm dy
 =2\bigl(g(x/2)-g(x)\bigr).
\]
The right-hand side tends to zero as \(x\downarrow0\), by continuity at zero, and as \(x\to\infty\), by~\eqref{eq:fm-asymptotic}.
Consequently
\[
 F'(s)=e^s g'(e^s)\longrightarrow0
 \qquad(s\to-\infty\ \text{or}\ s\to\infty).
\]
Because \(F'\) is continuous, these limits imply that it is bounded and uniformly continuous on \(\RR\), and hence \(\omega_m(\rho)\to0\) as \(\rho\downarrow0\).

For \(d>0\), \(h\in\RR\), and \(s_0=\log d\), the fundamental theorem of calculus now gives
\begin{align*}
 \left|g(de^h)-g(d)-h d g'(d)\right|
 &=\left|\int_0^h
   \bigl(F'(s_0+s)-F'(s_0)\bigr)\,\mathrm ds\right|\\
 &\le |h|\omega_m(|h|),
\end{align*}
which is~\eqref{eq:fm-log-taylor}.
\end{proof}

\begin{proof}[Proof of \cref{lem:sparse-obstacle-data}]
We work separately on each component.
If \(z\) is isolated, then \(d_z=T_z=p_z=0\), so set \(\eta_z=0\).
Now let \(C\) be a component containing an edge, and set \(\ell=|C|\) and \(W_C=w(C)\).
For \(S\subseteq C\), write
\[
 c_C(S)=\sum_{\substack{xy\in E(G[C])\\x\in S,\ y\notin S}}w(x)w(y).
\]
Each triangle contributes twice its weight product at each of its three vertices, whereas each edge contributes its two endpoint terms.
It follows that
\begin{equation}\label{eq:sparse-component-total}
 \sum_{z\in C}p_z
 =6T_{G[C]}(w)-\beta D_{G[C]}(w)
 \le\bigl(\Theta(G)-\beta\bigr)D_{G[C]}(w)<0.
\end{equation}
The last inequality is strict because \(C\) contains an edge and all weights are positive.
We also have, for every \(S\subseteq C\),
\begin{equation}\label{eq:sparse-component-cut}
 \sum_{z\in S}p_z\le2W_Cc_C(S).
\end{equation}
Indeed, the contribution from the edges and triangles wholly in \(S\) is
\[
 6T_{G[S]}(w)-\beta D_{G[S]}(w)\le0
\]
by the definition of \(\Theta(G)\), and we may discard the remaining negative edge terms.
A triangle meeting both sides of the cut contributes twice its weight product if it has one vertex in \(S\), and four times its weight product if it has two.
Charge these amounts equally to the triangle's two cut edges.
A fixed cut edge \(xy\) then receives at most
\[
 2w(x)w(y)\sum_{z\in N_G(x)\cap N_G(y)}w(z)
 \le2W_Cw(x)w(y).
\]
Summing over the cut proves~\eqref{eq:sparse-component-cut}.

On \(\RRnn^C\), minimize
\[
 \Phi(\eta)=\frac12\sum_{xy\in E(G[C])}
 w(x)w(y)(\eta_x-\eta_y)^2-\sum_{x\in C}p_x\eta_x.
\]
A minimizer exists.
To see this, write \(\eta=t\boldsymbol1+\eta^\circ\), where \(t=\min_x\eta_x\) and \(\min_x\eta_x^\circ=0\).
Since \(C\) is connected, the quadratic part of \(\Phi\) has a positive minimum on
\[
 \{\eta^\circ\ge0:\min_x\eta_x^\circ=0,\ \|\eta^\circ\|_2=1\}.
\]
By homogeneity and Cauchy--Schwarz, there is therefore some \(c>0\) such that
\[
 \Phi(\eta)\ge c\|\eta^\circ\|_2^2-\|p\|_2\|\eta^\circ\|_2
 -t\sum_{x\in C}p_x.
\]
By~\eqref{eq:sparse-component-total}, the right-hand side tends to infinity whenever \(\eta\) is unbounded in \(\RRnn^C\), so \(\Phi\) is coercive there.

Let \(\eta\) be a minimizer.
Coordinatewise optimality gives
\begin{equation}\label{eq:sparse-obstacle-optimality}
 L\eta\ge p,
 \qquad
 \eta_x\bigl((L\eta)_x-p_x\bigr)=0
 \quad(x\in C).
\end{equation}
Moreover, \(\min_{x\in C}\eta_x=0\).
Otherwise, subtracting \(t\boldsymbol1\), where \(t=\min_x\eta_x>0\), preserves feasibility and changes the objective by \(t\sum_{x\in C}p_x<0\), contradicting minimality.

Order \(C=\{x_1,\ldots,x_\ell\}\) so that \(\eta_{x_1}\le\cdots\le\eta_{x_\ell}\).
If \(\Delta_i=\eta_{x_{i+1}}-\eta_{x_i}>0\), set \(S_i=\{x_{i+1},\ldots,x_\ell\}\).
Every coordinate in \(S_i\) is positive, so~\eqref{eq:sparse-obstacle-optimality} and cancellation of the internal edge terms give
\[
 \sum_{x\in S_i}p_x
 =\sum_{\substack{xy\in E(G[C])\\x\in S_i,\ y\notin S_i}}
   w(x)w(y)(\eta_x-\eta_y)
 \ge\Delta_i c_C(S_i).
\]
Combining this with~\eqref{eq:sparse-component-cut} gives \(\Delta_i c_C(S_i)\le2W_Cc_C(S_i)\).
The cut is nonempty because \(C\) is connected, and all weights are positive, so \(\Delta_i\le2W_C\).
Since the minimum coordinate is zero, summing the at most \(\ell-1\) positive gaps yields
\[
 0\le\eta_x\le2W_C(\ell-1)
 \qquad(x\in C).
\]

Combining these componentwise minimizers gives a nonnegative vector \(\eta\), zero at isolated vertices, satisfying \eqref{eq:sparse-compensator}.
Finally, if \(z\in C\), then
\[
 0\le u_z=w(N_G(z))+\eta_z
 \le W_C+2W_C(\ell-1)=(2\ell-1)W_C\le2n,
\]
because \(W_C\le w(V)=1\) and \(\ell\le n\).
For isolated \(z\), we have \(u_z=0\).
This proves~\eqref{eq:sparse-direction-bound}.
\end{proof}

\subsection{Fractionally colorable neighborhoods}
\label{sec:r-colorable}

We start the proofs of \cref{thm:LDAsymptoticRColNbdhs,thm:LDRColNbdhs} by defining the required random experiment in terms of an auxiliary parameter $\theta$ over which we will optimize later.
Fix \(r\ge1\) and \(0<\theta<1\), and set
\begin{equation}\label{eq:r-local-parameters}
 A=\frac r{1-\theta},
 \qquad
 g(d)=f\left(\frac{\log A}{\theta}d\right).
\end{equation}
Let \(G=(V,E)\) be a graph whose neighborhoods are fractionally \(r\)-colorable, and let \(w\in\RRpos^V\) be normalized positive weights on \(G\).
For each \(x\in V(G)\), let \(\nu_x\) be a distribution on \(\cI(G[N_G(x)])\) in which every vertex has marginal \(1/r\); the existence of such a distribution follows easily from the definition of fractional colorability.

Choose a pivot \(x\) with probability \(w(x)\).
Conditional on \(x\), produce \((S,G',w')\) as follows:
\begin{itemize}
\item with probability \(\theta\), select the pivot and take
\[
 S=\{x\},\qquad
 G'=G-N_G[x],\qquad
 w'=w|_{V(G')};
\]
\item with probability \(1-\theta\), draw \(J_x\sim\nu_x\), select no vertex, and take
\[
 S=\emptyset,\qquad
 G'=G-\bigl(N_G(x)\setminus J_x\bigr),\qquad
 w'(u)=
 \begin{cases}
  Aw(u),&u\in J_x,\\
  w(u),&u\in V(G')\setminus J_x.
 \end{cases}
\]
\end{itemize}
In both branches \(N_G[S]\cap V(G')=\emptyset\).
The selection branch always produces a proper induced subgraph, so
\begin{equation}\label{eq:r-full-graph-probability}
 \PP(G'=G)\le1-\theta.
\end{equation}
The other branch, which zooms in on an independent set in the neighborhood of the pivot, is called the \emph{certificate branch}.
Once a demand function \(g\) is fixed, let \(Y_v\) be the ideal value from~\eqref{eq:idealSelection}.
The analysis below chooses \(g\) so that the experiment satisfies, for all $v\in V$, $\EE Y_v\ge g(d_{G,w}(v))$.

For \(r\ge1\), let \(K(r)\) be the unique \(K\ge1\) satisfying
\begin{equation}\label{eq:K-equation}
 K-1=\log(rK).
\end{equation}
The proof uses the following analytic lemma, which both identifies the optimal constant and gives the inequality needed by the induction. It is curious that the variational characterization of this approach defining the constant $K(r)$ optimal for the method gives exactly the same $K(r)$ as different methods in which the variational characterization is not as prominent~\cite{DKPS20Structure}.

\begin{lemma}
\label{lem:rColorableAnalytic}
For $0<\theta<1$, recall the definitions~\eqref{eq:r-local-parameters} of $A$ and $g$.
The function \(g\) is decreasing and convex. 
For every \(d\ge0\), we have
\begin{equation}\label{eq:scaled-shearer}
 \theta+(1-\theta)g\left(\frac d{1-\theta}\right)
 -(1+d)g(d)+\frac dA g(d/A)\ge0.
\end{equation}
Moreover,
\begin{equation}\label{eq:r-optimum}
 \inf_{0<\theta<1}
 \frac{\log(r/(1-\theta))}{\theta}=K(r) \le 2\log(2r).
\end{equation}
If \(r>1\), the minimum is attained at
\begin{equation}\label{eq:r-optimizer}
 \theta_*=1-\frac1{K(r)}.
\end{equation}
For \(r=1\), the infimum is approached as \(\theta\downarrow0\).
Finally, \(K\) is increasing and
\begin{equation}\label{eq:K-asymptotic}
 K(r)=\log r+\log\log r+1+o(1)
 \qquad(r\to\infty).
\end{equation}
\end{lemma}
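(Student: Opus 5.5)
The plan is to treat the four assertions separately: the properties of \(g\), the inequality~\eqref{eq:scaled-shearer}, the variational formula~\eqref{eq:r-optimum} with its bound and optimizer, and the asymptotics~\eqref{eq:K-asymptotic}.

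\textbf{Properties of \(g\).} These are immediate from \cref{lem:fm} with \(m=1\). Write \(c=\log A/\theta>0\), so that \(g(d)=f(cd)\). Precomposing the decreasing convex function \(f\) with a positive linear map keeps it decreasing and convex.

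\textbf{Inequality~\eqref{eq:scaled-shearer}.} This is the heart of the lemma and the step I expect to be the main obstacle. Let \(\Psi(d)\) denote its left-hand side. At \(d=0\) we have \(\Psi(0)=\theta+(1-\theta)-1+0=0\), so it suffices to show \(\Psi'(d)\ge0\) for \(d>0\), or at least that \(\Psi\) cannot become negative. The ODE from \cref{lem:fm} reads \(x(x-1)f'(x)+(x+1)f(x)=1\); rescaled it becomes
\[
 d(cd-1)g'(d)+(cd+1)g(d)=1.
\]
I will use it to eliminate \(g'\) at each of the three arguments \(d/(1-\theta)\), \(d\) and \(d/A\).

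Tangent-line (convexity) bounds alone, namely \(g(d/(1-\theta))\ge g(d)+\tfrac{\theta d}{1-\theta}g'(d)\) and the analogous bound at \(d/A\), lose too much: the resulting coefficients of \(g\) and \(g'\) do not match the ODE. So the first thing I will try is the integral representation
\[
 f(x)=\int_0^1\frac{1-t}{1+(x-1)t}\,\mathrm dt .
\]
This turns \(\Psi\) into an integral over \(t\) of explicit rational functions of \(d\). I will then check whether the integrand is nonnegative pointwise, possibly after integrating by parts in \(t\) so as to use the ODE. The identity \(c=\log A/\theta\) should enter exactly through the logarithmic term produced by \(\int_0^1\) of these rational functions.

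If pointwise positivity fails, my fallback is logarithmic coordinates. Set \(F(s)=f(e^s)\). The three arguments \(cd/(1-\theta)\), \(cd\) and \(cd/A\) are then shifts of \(s=\log(cd)\) by \(\log\frac1{1-\theta}\), \(0\) and \(-\log A\). I would attempt a convexity (Jensen-type) argument in \(s\) for the function \(s\mapsto e^sF(s)\) and related combinations, with weights \(1-\theta\) and \(1/A\), together with the ODE.

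\textbf{Variational formula~\eqref{eq:r-optimum}.} This is elementary calculus. Set \(\phi(\theta)=\log(r/(1-\theta))/\theta\). The stationarity condition is \(\theta/(1-\theta)=\log(r/(1-\theta))\). Substituting \(K=1/(1-\theta)\) turns this into \(K-1=\log(rK)\), which is~\eqref{eq:K-equation}, and the value at the critical point is \(\log(rK)/\theta=(K-1)/(1-1/K)=K\). This identifies \(\theta_*=1-1/K(r)\) in~\eqref{eq:r-optimizer}. Checking the boundary behaviour, \(\phi\to\infty\) as \(\theta\to1\), and \(\phi\to\infty\) as \(\theta\to0\) when \(r>1\), shows the critical point is the minimum. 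When \(r=1\), \(\phi(\theta)=-\log(1-\theta)/\theta\) decreases to \(1=K(1)\) as \(\theta\downarrow0\), so the infimum is approached but not attained.

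\textbf{Bound \(K\le2\log(2r)\).} The map \(K\mapsto K-1-\log(rK)\) is increasing for \(K\ge1\). It therefore suffices to check that \(K=2\log(2r)\) makes this expression nonnegative, which reduces to \(u\ge1+\log u\) with \(u=\log(2r)\). If \(2\log(2r)<1\), the bound is only relevant for \(r\) near \(1\), where it is checked directly.

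\textbf{Monotonicity and asymptotics of \(K\).} Monotonicity of \(K\) in \(r\) follows by implicit differentiation, since \(K'(r)=\frac{1/r}{1-1/K}>0\). For~\eqref{eq:K-asymptotic}, iterate \(K=1+\log r+\log K\): first \(K\sim\log r\), then \(\log K=\log\log r+o(1)\), which gives \(K=\log r+\log\log r+1+o(1)\).
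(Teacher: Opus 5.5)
Your treatment of everything other than \eqref{eq:scaled-shearer} is sound. This covers the properties of $g$, the stationarity computation giving $\theta_*=1-1/K(r)$ with minimum value $K(r)$, and the implicit differentiation. It also covers the reduction of $K(r)\le2\log(2r)$ to $u\ge1+\log u$ with $u=\log(2r)$; the paper leaves this check implicit, and since $2\log(2r)\ge2\log2>1$ for $r\ge1$, your caveat about small $r$ is unnecessary. Bootstrapping $K=1+\log r+\log K$ in place of the Lambert-$W$ expansion is also fine.

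The genuine gap is \eqref{eq:scaled-shearer} itself. You rightly call it the heart of the lemma but do not prove it: you list strategies, and the two concrete ones fail.

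The reduction to $\Psi'(d)\ge0$ cannot work because $\Psi$ is not monotone. With $x=cd$ and $\psi(x)=xf(x)$, one has $\frac dA g(d/A)-dg(d)=c^{-1}\bigl(\psi(x/A)-\psi(x)\bigr)$. Since $\psi(x)=\log x-1+o(1)$, this tends to $-\log A/c=-\theta$, while $g(d)$ and $g(d/(1-\theta))$ tend to $0$. Hence $\Psi(d)\to0$ as $d\to\infty$, even though $\Psi(0)=0$ and $\Psi>0$ on $(0,\infty)$. The same computation shows that $c=\log A/\theta$ is the smallest scaling for which \eqref{eq:scaled-shearer} can hold at large $d$, so there is no slack.

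Pointwise positivity in the integral representation fails as well. At $t=0$ each $f$-integrand equals $1$, so, writing $\theta=\int_0^1\theta\,\mathrm dt$, the integrand there is $-d(1-1/A)<0$ for $d>0$.

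Your fallback names the right object, $Q(s)=e^sf(e^s)=\psi(e^s)$, but omits both key steps. First, $Q$ is convex: differentiating the ODE gives $z(z-1)f''(z)+3zf'(z)+f(z)=0$, whence $Q''(\log z)=zf+3z^2f'+z^3f''=z^2f''(z)\ge0$. Second, what is needed is the supporting line of $Q$ at $\log z$, not a Jensen average with weights $1-\theta$ and $1/A$. It gives $\psi(z/A)-\psi(z)\ge-(\log A)\,z\psi'(z)$, and the ODE rewrites $z\psi'(z)=1-f(z)+zf'(z)$. With $z=cd$, the prefactor $1/c=\theta/\log A$ cancels the $\log A$. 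Then $\Psi(d)$ is bounded below by $(1-\theta)\bigl[f(z/(1-\theta))-f(z)\bigr]-\theta zf'(z)$, which is nonnegative by exactly the tangent-line bound at $d/(1-\theta)$ that you dismissed.

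So you were right that a linear tangent bound at $d/A$ is too weak; it drives the lower bound to $-\infty$ like $\log d$. But ordinary convexity is the correct tool for the $d/(1-\theta)$ term, and only the $d/A$ term must be treated in logarithmic coordinates.
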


\begin{proof}
The monotonicity and convexity of \(g\) follow from \cref{lem:fm} and the positive rescaling of its argument.
Let \(\psi(z)=zf(z)\) and \(Q(s)=\psi(e^s)\).
The differential equation~\eqref{eq:fm-ode} with \(m=1\) gives
\begin{equation}\label{eq:psi-identity}
 z\psi'(z)=1-f(z)+zf'(z).
\end{equation}
Differentiating that equation gives
\[
 z(z-1)f''(z)+3zf'(z)+f(z)=0,
\]
and hence
\[
 Q''(\log z)
 =zf(z)+3z^2f'(z)+z^3f''(z)
 =z^2f''(z)\ge0.
\]
Thus \(Q\) is convex, and its supporting-line inequality gives
\begin{equation}\label{eq:scaled-psi}
 \psi(z/A)-\psi(z)\ge-(\log A)z\psi'(z).
\end{equation}

For \(d>0\), set \(z=(\log A)d/\theta\) and \(\ell=\log A\).
The left side of~\eqref{eq:scaled-shearer} is
\[
 \theta+(1-\theta)f\left(\frac z{1-\theta}\right)-f(z)
 +\frac\theta\ell\bigl(\psi(z/A)-\psi(z)\bigr).
\]
By~\eqref{eq:scaled-psi} and~\eqref{eq:psi-identity}, this is at least
\[
 (1-\theta)
 \left[f\left(\frac z{1-\theta}\right)-f(z)\right]
 -\theta zf'(z),
\]
which is nonnegative because convexity gives
\[
 f\left(\frac z{1-\theta}\right)-f(z)
 \ge\frac{\theta z}{1-\theta}f'(z).
\]
Equality holds at \(d=0\).

For \(r>1\), a stationary point of \(\theta\mapsto\log(r/(1-\theta))/\theta\) satisfies
\[
 \log\frac r{1-\theta}=\frac\theta{1-\theta}.
\]
Choosing \(K=1/(1-\theta)\) gives~\eqref{eq:K-equation}, and endpoint behavior shows that this is the unique minimum.
The function \(K-1-\log(rK)\) is strictly increasing for \(K>1\), so the defining root is unique.
At the minimizer, the minimum value is \(K(r)\).
For \(r=1\), the inequality \(\log K\le K-1\), with equality only at \(K=1\), first identifies \(K(1)=1\).
The inequality \(-\log(1-\theta)\ge\theta\) and the limit at zero then prove~\eqref{eq:r-optimum}.
Implicit differentiation gives
\[
 K'(r)=\frac{K(r)}{r(K(r)-1)}>0
 \qquad(r>1).
\]
Finally, rewriting~\eqref{eq:K-equation} gives
\[
 K(r)=-W_{-1}\left(-\frac1{er}\right),
\]
where \(W_{-1}\) is the lower real branch of the Lambert \(W\)-function. 
This is the definition given in~\cite{DKPS20Structure}.
The standard expansion of this branch at zero~\cite{CGH+96} gives~\eqref{eq:K-asymptotic}.
\end{proof}

\noindent
Using this analytic lemma, we record the key inequality used in the induction step.

\begin{lemma}
\label{lem:r-continuation}
Recall \(A\) and \(g\) from~\eqref{eq:r-local-parameters}.
For every nonempty graph \(G=(V,E)\) whose neighborhoods are fractionally \(r\)-colorable and every normalized positive weighting \(w\), the random experiment above satisfies for $v\in V$
\begin{equation}\label{eq:r-continuation}
 \EE Y_v\ge g(d_{G,w}(v)).
\end{equation}
\end{lemma}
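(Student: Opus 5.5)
We compute $\EE Y_v$ directly for a fixed target vertex $v$, with $d=d_{G,w}(v)$. We split according to the pivot $x$ into three cases, $x=v$, $x\in N_G(v)$ and $x\notin N_G[v]$, and bound each contribution from below using only that $g$ is decreasing and convex (\cref{lem:rColorableAnalytic}). The three bounds should combine into exactly $w(v)$ times the left side of \eqref{eq:scaled-shearer} plus $g(d)$, and then \eqref{eq:scaled-shearer} finishes. If $v$ is isolated, every outcome leaves $v$ either selected or isolated in $G'$, so $Y_v=1=g(0)$; hence we may assume $d>0$.

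\emph{Pivot $x=v$} (probability $w(v)$). In the selection branch $Y_v=1$. In the certificate branch $v$ survives, and its neighbors in $G'$ are exactly $J_v$, each with weight multiplied by $A$. So $d_{G',w'}(v)=A\,w(J_v)/w(v)$. Since $\EE\, w(J_v)=w(N_G(v))/r$, Jensen's inequality for convex $g$ gives $\EE\, g(d_{G',w'}(v))\ge g(Ad/r)=g(d/(1-\theta))$. The contribution is therefore at least $w(v)\bigl[\theta+(1-\theta)g(d/(1-\theta))\bigr]$.

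\emph{Pivot $x\in N_G(v)$}. The selection branch gives $Y_v=0$. In the certificate branch $v$ survives only if $v\in J_x$, which has probability $1/r$. In that case $v$ gets weight $Aw(v)$. Since $J_x$ is independent, no neighbor of $v$ lies in $J_x$, so neighbor weights are unchanged or the neighbors are deleted. Hence $d_{G',w'}(v)\le d/A$, and since $g$ is decreasing, $Y_v\ge g(d/A)$ on this event. Summing over $x$, the contribution is at least $w(N_G(v))(1-\theta)r^{-1}g(d/A)=w(v)\,\tfrac{d}{A}\,g(d/A)$, using $(1-\theta)/r=1/A$.

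\emph{Pivot $x\notin N_G[v]$}. In both branches $v$ survives, and we group all of these outcomes, which have total mass $w(V)-w(N_G[v])$, and apply Jensen once. The key computation is that the average weighted degree of $v$ over these outcomes is at most $d$.
\begin{itemize}
\item A neighbor $y\notin N_G(x)$ keeps weight $w(y)$.
\item A neighbor $y\in N_G(x)$ is deleted in the selection branch.
\item A neighbor $y\in N_G(x)$ in the certificate branch contributes $Aw(y)$ with probability $1/r$ and $0$ otherwise, so its expected contribution is $(1-\theta)\cdot\tfrac{A}{r}w(y)=w(y)$.
\end{itemize}
So the expected weight of each neighbor is at most $w(y)$, while the weight of $v$ stays $w(v)$. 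Convexity and monotonicity of $g$ then give a contribution of at least $\bigl(w(V)-w(N_G[v])\bigr)g(d)$.

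Adding the three cases with $w(V)=1$ and $w(N_G[v])=w(v)(1+d)$ gives
\[
 \EE Y_v-g(d)\ge w(v)\Bigl[\theta+(1-\theta)g\bigl(\tfrac d{1-\theta}\bigr)-(1+d)g(d)+\tfrac dA g(d/A)\Bigr]\ge0
\]
by \eqref{eq:scaled-shearer}. The main point to check carefully is the third case. One must not apply Jensen separately per pivot, because the certificate branch can increase the degree of $v$. Pooling all outcomes with $x\notin N_G[v]$ is what makes the $A$-inflation cancel exactly in expectation against the $1/r$ marginals. One must also verify that Jensen is applied with correctly normalized probability masses, i.e.\ total mass $1-w(N_G[v])$ with the mean computed relative to that mass.
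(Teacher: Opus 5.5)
Your proof is correct and follows the paper's argument: the same three-way split on the pivot, Jensen for $x=v$, monotonicity for $x\in N_G(v)$, mean-preservation of the degree for $x\notin N_G[v]$, and the combination via \eqref{eq:scaled-shearer}. One correction to your commentary: pooling the pivots $x\notin N_G[v]$ is unnecessary and per-pivot Jensen is fine, because for each fixed such pivot the expected new degree over both branches is already exactly $d_v$ (the identity $(1-\theta)A/r=1$ acts within each pivot, which is how the paper argues); only a Jensen step applied separately to each branch would fail.
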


\begin{proof}
Fix a target vertex \(v\in V(G)\).
Abbreviate \(d_v=d_{G,w}(v)\), and, whenever \(v\in V(G')\), write \(d'_v=d_{G',w'}(v)\).
Write \(\EE_x\) for expectation conditional on the pivot \(x\).
We first prove the three bounds
\begin{align}
 x=v:\quad
 &\EE_xY_v
 \ge\theta+(1-\theta)
 g\left(\frac{d_v}{1-\theta}\right),
 \label{eq:r-own-response}\\
 x\in N_G(v):\quad
 &\EE_xY_v
 \ge\frac1A g(d_v/A),
 \label{eq:r-neighbor-response}\\
 x\notin N_G[v]:\quad
 &\EE_xY_v\ge g(d_v).
 \label{eq:r-nonneighbor-response}
\end{align}
For the first inequality, the selection branch puts \(v\) in \(S\).
In the certificate branch \(v\) remains, and its expected new degree is
\[
 \EE_x[d'_v\mid\text{certificate}]
 =\frac Ar d_v=\frac{d_v}{1-\theta}.
\]
Jensen's inequality applies because \(g\) is convex.

For the second inequality, \(v\) survives precisely on the certificate branch when \(v\in J_x\), an event of probability \((1-\theta)/r=1/A\).
Conditional on this event, since \(J_x\) is an independent set, it excludes every common neighbor of \(x\) and \(v\).
If
\[
 c_{x,v}=\frac{w(N_G(x)\cap N_G(v))}{w(v)},
\]
then \(d'_v=(d_v-c_{x,v})/A\), and monotonicity of \(g\) proves~\eqref{eq:r-neighbor-response}.

Finally, suppose that \(x\notin N_G[v]\) and write \(B=N_G(x)\cap N_G(v)\).
Relative to the original degree, both branches remove the mass of \(B\), while the certificate branch retains the vertices of \(B\cap J_x\) with mass multiplied by \(A\).
Hence
\[
 \EE_xd'_v
 =d_v-\frac{w(B)}{w(v)}
 +(1-\theta)\frac Ar\frac{w(B)}{w(v)}
 =d_v.
\]
Another application of Jensen proves \eqref{eq:r-nonneighbor-response}.

Multiplying the three established bounds by \(w(x)\) and summing over the possible pivots \(x\) gives
\begin{align}
 \EE Y_v
 \ge g(d_v)+w(v)\bigg[
 &\theta+(1-\theta)g\left(\frac{d_v}{1-\theta}\right)
 -(1+d_v)g(d_v)
 +\frac{d_v}{A}g(d_v/A)\bigg]
 \ge g(d_v),
 \label{eq:r-ideal-branch}
\end{align}
where the final inequality is by \cref{lem:rColorableAnalytic}.
\end{proof}

\noindent
The full induction now follows easily.

\begin{proposition}
\label{prop:r-parametrized}
Fix \(0<\theta<1\), and take \(A\) and \(g\) from \eqref{eq:r-local-parameters}.
If every neighborhood of \(G\) is fractionally \(r\)-colorable, then, for every \(w\in\RRpos^{V(G)}\), there is a distribution \(\mu\) on \(\cI(G)\) such that for every $v\in V(G)$,
\begin{equation}\label{eq:r-parametrized}
 \mu(v)\ge
 f\left(\frac{\log A}{\theta}d_{G,w}(v)\right).
\end{equation}
\end{proposition}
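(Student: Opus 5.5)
We argue by induction on \(|V(G)|\), with the induction step supplied by \cref{lem:LDInductionDriver} and \cref{lem:r-continuation}. Fix \(0<\theta<1\) and take \(A\) and \(g\) from~\eqref{eq:r-local-parameters}. The class of graphs with fractionally \(r\)-colorable neighborhoods is hereditary. Indeed, if \(G'\) is an induced subgraph of \(G\), then \(G'[N_{G'}(x)]\) is an induced subgraph of \(G[N_G(x)]\). Restricting a distribution on \(\cI(G[N_G(x)])\) with marginals \(1/r\) to \(N_{G'}(x)\) gives a distribution on \(\cI(G'[N_{G'}(x)])\) with the same marginals, so the certificate distributions \(\nu_x\) still exist. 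The base case of the empty graph is vacuous. Since \(g(0)=f(0)=1\), an isolated vertex is handled as in the sparse case, though \cref{lem:r-continuation} already covers it.

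For the induction step, let \(G\) be nonempty and assume the statement holds for every proper induced subgraph \(G'\subsetneq G\) and every positive weighting of \(G'\). By scale invariance of \(d_{G,w}\), it suffices to treat normalized weights. For such \(w\), we run the random experiment of this subsection. Both branches produce positive weights \(w'\): in the certificate branch they are \(w\) or \(Aw\), and in the selection branch they are \(w\) restricted. \Cref{lem:r-continuation} gives \(\EE Y_v\ge g(d_{G,w}(v))\) for every \(v\), and \eqref{eq:r-full-graph-probability} gives \(\PP(G'=G)\le1-\theta\). We therefore apply \cref{lem:LDInductionDriver} with \(h=0\) and \(\rho=1-\theta<1\). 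This yields \(\delta(G)\le 0\), so \(\delta(G)=0\), which is exactly~\eqref{eq:r-parametrized} for every positive \(w\). By the compactness argument preceding \cref{lem:LDInductionDriver}, the value \(0\) is attained as an admissible error.

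One point needs checking. In the certificate branch we may have \(G'=G\), which happens when \(J_x=N_G(x)\), for instance when \(N_G(x)=\emptyset\). In that event the weights \(w'\) differ from \(w\), so the induction hypothesis is not available. \Cref{lem:LDInductionDriver} is designed for exactly this situation: on \(\{G'=G\}\) it uses the \(\delta(G)\)-error distribution for the new weights \(w'\), and the contraction factor \(\rho=1-\theta\) absorbs the error. So no extra work is needed. Unlike the sparse-neighborhoods case, there is no \(\eps\to0\) limit here, because \cref{lem:r-continuation} gives an exact inequality with \(h=0\).

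The step I expect to be the main obstacle has already been isolated in \cref{lem:r-continuation} and \cref{lem:rColorableAnalytic}, namely the inequality~\eqref{eq:scaled-shearer}. What remains here is to verify the hypotheses of the driver lemma: heredity, positivity of \(w'\), and the bound on \(\PP(G'=G)\). Optimizing \(\theta\) through~\eqref{eq:r-optimum} to obtain \cref{thm:LDRColNbdhs} is deferred. For \(r>1\) it amounts to taking \(\theta=\theta_*\). For \(r=1\) it amounts to a limit \(\theta\downarrow0\) with a subsequential limit of the distributions, using continuity of \(f\).
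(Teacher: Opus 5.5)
Your proposal is correct and follows the paper's proof essentially verbatim. Both argue by induction on \(|V(G)|\) using heredity of the neighborhood hypothesis, and both supply the step by \cref{lem:r-continuation} together with \cref{lem:LDInductionDriver} applied with \(h=0\) and \(\rho=1-\theta\). Your extra checks are accurate elaborations of points the paper leaves implicit: restricting the certificate distributions to the induced subgraph, positivity of \(w'\), and the event \(J_x=N_G(x)\) (which gives \(G'=G\)) being absorbed by the driver lemma.
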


\begin{proof}
We prove the proposition by induction on \(|V(G)|\).
The empty graph is immediate.
The neighborhood hypothesis is hereditary, since every neighborhood in an induced subgraph is an induced subgraph of a neighborhood in the original graph.

Now fix a nonempty \(G\) and suppose the result holds on every proper induced subgraph \(G'\) of \(G\).
For every normalized positive weighting \(w\), use the random experiment from the start of this subsection.
\Cref{lem:r-continuation} gives \(\EE Y_v\ge g(d_{G,w}(v))\) simultaneously for every target, and by construction \(\PP(G'=G)\le1-\theta\).
\Cref{lem:LDInductionDriver}, with \(h=0\) and \(\rho=1-\theta\), therefore gives $\delta(G)\le 0$.
Scale invariance gives the assertion for every positive weighting.
\end{proof}

\begin{proof}[Proof of \cref{thm:LDRColNbdhs}]
Fix \(G\) and \(w\) as in the theorem.
If \(r>1\), take \(\theta=\theta_*\) in \cref{prop:r-parametrized}.
\Cref{lem:rColorableAnalytic} gives \((\log A)/\theta_*=K(r)\), proving~\eqref{eq:LDRColMarginal}.

If \(r=1\), choose a sequence \(\theta_j\downarrow0\).
Then \(-\log(1-\theta_j)/\theta_j\to1=K(1)\).
For the fixed pair \((G,w)\), choose a convergent subsequence of the corresponding laws in the probability simplex on \(\cI(G)\).
By continuity of \(f\), the limiting law satisfies~\eqref{eq:LDRColMarginal}.
\end{proof}

\begin{proof}[Proof of \cref{thm:LDAsymptoticRColNbdhs}]
For fixed \(r\), equation~\eqref{eq:fm-asymptotic} with \(m=1\) gives
\[
 f(K(r)d)
 =(1-o(1))\frac{\log d}{K(r)d}.
\]
The result follows from \cref{thm:LDRColNbdhs}.
\end{proof}

\section{Independence polynomial bounds}
\label{sec:partition}

We formulate the main proof idea in the multivariate setting. 
The multivariate independence polynomial of a graph \(G=(V,E)\) is
\[ Z_G(\lambda) = \sum_{I\in \cI(G)}\prod_{v\in I}\lambda_v, \]
where $\lambda$ is a vector of fugacities indexed by $v$.

\subsection{Proof overview}

The proofs use the identity
\begin{equation}\label{eq:intro-deletion}
 Z_G(\lambda)
 =Z_{G-v}(\lambda)+\lambda_v Z_{G-N_G[v]}(\lambda).
\end{equation}
We seek a pivot vertex $v$ for which the inductive lower bounds for the two terms on the right combine to prove the desired estimate.
For $p\in[0,1]$, let
\[
 h(p)=-p\log p-(1-p)\log(1-p)
\]
be the base-$e$ Bernoulli entropy function (with $h(0)=h(1)=0$).
A standard variational identity (see \cref{prop:variational} in \cref{app:details}) rewrites \eqref{eq:intro-deletion} as
\begin{align}\label{eq:intro-log-sum}
 \log Z_G(\lambda)=\max_{p\in[0,1]}\biggl\{
  h(p)&+(1-p)\log Z_{G-v}(\lambda)
  +p\log\lambda_v+
       p\log Z_{G-N_G[v]}(\lambda)
 \biggr\}.
\end{align}
This is a convenient tool for analyzing random choices of the pivot vertex \(v\).

\subsection{General proof structure}

We prove the independence polynomial bounds directly from the identity~\eqref{eq:intro-deletion} by induction.
Fugacity vectors throughout this section are taken in the positive orthant. 
Throughout, we write $\ell=\log(1+\lambda)$, or $\ell_v=\log(1+\lambda_v)$ in the multivariate setting.

For \(\lambda>0\) and \(s\ge0\), set
\begin{equation}\label{eq:partition-F-definition}
 F_\lambda(s)
 =h(1-e^{-s})+(1-e^{-s})\log\lambda.
\end{equation}
The same elementary inequality drives both inductions.

\begin{lemma}\label{lem:partition-support}
Let \(\lambda>0\), write \(\ell=\log(1+\lambda)\), and let \(s\ge0\).
Then
\begin{equation}\label{eq:partition-support}
 F_\lambda(s)
 \ge s\left(1+\log\frac{\ell}{s}\right),
\end{equation}
where the right-hand side is interpreted as zero when \(s=0\).
\end{lemma}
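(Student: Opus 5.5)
The plan is to recognize both sides of \eqref{eq:partition-support} as $\ell$ minus a Kullback--Leibler divergence, and then obtain the inequality from the data processing inequality. No direct calculus comparison of the two sides is needed.

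Set $p=1-e^{-s}$ and $q=\lambda/(1+\lambda)$. The key observation is that $q=1-e^{-\ell}$, since $1-q=1/(1+\lambda)=e^{-\ell}$. Expanding $h(p)+p\log\lambda$ and using $\log(1+\lambda)=\ell$ gives the standard identity
\[
 F_\lambda(s)=h(p)+p\log\lambda=\ell-\dkl{\mathrm{Ber}(p)}{\mathrm{Ber}(q)}.
\]
To verify it, note that $p\log\frac pq+(1-p)\log\frac{1-p}{1-q}=-h(p)-p\log\lambda+\log(1+\lambda)$. For the right-hand side of \eqref{eq:partition-support}, a direct rearrangement gives
\[
 s\left(1+\log\frac{\ell}{s}\right)=\ell-\left(s\log\frac s\ell-s+\ell\right)=\ell-\dkl{\mathrm{Po}(s)}{\mathrm{Po}(\ell)},
\]
where $\mathrm{Po}(\cdot)$ denotes the Poisson distribution. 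This uses the familiar closed form of the divergence between two Poisson laws. At $s=0$ it reads $\ell-\ell=0$, which matches the stated convention.

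The inequality therefore reduces to
\[
 \dkl{\mathrm{Ber}(1-e^{-s})}{\mathrm{Ber}(1-e^{-\ell})}\le\dkl{\mathrm{Po}(s)}{\mathrm{Po}(\ell)}.
\]
To prove this, I would apply the deterministic map $N\mapsto\mathbf 1\{N\ge1\}$. It pushes $\mathrm{Po}(s)$ forward to $\mathrm{Ber}(1-e^{-s})$ and $\mathrm{Po}(\ell)$ forward to $\mathrm{Ber}(1-e^{-\ell})$. The data processing inequality for Kullback--Leibler divergence, which is a consequence of the log-sum inequality, then gives the required bound. The degenerate case $s=0$ is consistent: the Bernoulli side is $\log\frac1{1-q}=\ell$ and the Poisson side is $\ell$, so equality holds.

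The main obstacle is the second identity, that is, spotting that the right-hand side is an exact Poisson divergence and that $q=1-e^{-\ell}$. Once these are seen, the remaining steps are routine. A first-principles attempt would instead compare derivatives in $s$. That route is awkward: the two sides agree at $s=0$ and touch tangentially at $s=\ell$, where both equal $\ell$, so a monotone comparison of derivatives fails. The divergence formulation explains both points of contact: equality holds at $s=0$, and at $s=\ell$ both divergences vanish.
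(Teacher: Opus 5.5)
Your proposal is correct and is essentially the paper's proof: the paper also writes the two sides as $\ell-\dkl{\operatorname{Bern}(p)}{\operatorname{Bern}(p_*)}$ and $\ell-\dkl{\operatorname{Poi}(s)}{\operatorname{Poi}(\ell)}$ with $p_*=\lambda/(1+\lambda)=1-e^{-\ell}$. It then compares the two divergences by merging all positive Poisson outcomes via the log-sum inequality, which is exactly your data-processing step for $N\mapsto\mathbf 1\{N\ge1\}$.
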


\begin{proof}
The case \(s=0\) is immediate, so suppose that \(s>0\).
Let \(p=1-e^{-s}\) and \(p_*=\lambda/(1+\lambda)=1-e^{-\ell}\).
Let \(X\) and \(X_*\) have Poisson distributions with means \(s\) and \(\ell\), respectively.
The probabilities that they are positive are \(p\) and \(p_*\).
If all positive outcomes are merged into a single outcome, the log-sum inequality gives
\[
 \sum_{k\ge1}\PP(X=k)\log\frac{\PP(X=k)}{\PP(X_*=k)}
 \ge p\log\frac{p}{p_*}.
\]
Adding the contributions from the zero outcomes therefore gives
\[
 \dkl{\operatorname{Bern}(p)}{\operatorname{Bern}(p_*)}
 \le
 \dkl{\operatorname{Poi}(s)}{\operatorname{Poi}(\ell)}.
\]
The two sides are, respectively,
\[
 \ell-F_\lambda(s)
 \quad\text{and}\quad
 \ell-s+s\log\frac{s}{\ell}.
\]
Cancelling \(\ell\) and rearranging proves~\eqref{eq:partition-support}.
\end{proof}

\subsection{Sparse neighborhoods}

We prove a multivariate generalization of \cref{thm:IPSparseNbhdsDegrees}.

\begin{theorem}\label{thm:IPSparseNbhdsMulti}
Fix \(a\ge0\), let \(G=(V,E)\) be a graph with $V\ne\emptyset$ satisfying \(\mad(G[N(v)])\le a\) for every \(v\in V\), and let \(\lambda\in\RRpos^V\).
Write
\[
 \ell_v=\log(1+\lambda_v),\qquad
 L=\max_{v\in V}\ell_v,\qquad
 C=e^{aL}.
\]
Then
\begin{equation}\label{eq:sparse-partition}
 \log Z_G(\lambda)
 \ge
 \max_{w\in\RRpos^V}
 \left\{
  \sum_{v\in V}w(v)\left(1+\log\frac{\ell_v}{w(v)}\right)
  -C\sum_{uv\in E}w(u)w(v)
 \right\}.
\end{equation}
Consequently,
\begin{equation}\label{eq:sparse-partition-degrees}
 \log Z_G(\lambda)
 \ge
 \sum_{v\in V}
 \frac{\mathcal A\bigl(W(Cd(v)\ell_v)\bigr)}
      {Cd(v)},
\end{equation}
where the summand for \(d(v)=0\) is interpreted as \(\ell_v\).
\end{theorem}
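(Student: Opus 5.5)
The plan is induction on \(|V|\) using the variational form \eqref{eq:intro-log-sum} of the deletion identity \eqref{eq:intro-deletion}, followed by an elementary optimization for \eqref{eq:sparse-partition-degrees}. Write \(\Phi_G(w)\) for the functional inside the maximum in \eqref{eq:sparse-partition}. Vertex deletion and the passage to \(G-v\) and \(G-N_G[v]\) preserve the neighborhood mad hypothesis, so induction applies.

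\textbf{Reduction to stationary weights.} For \(|V|=1\) the claim is \(\ell_v\ge\max_{w>0} w(1+\log(\ell_v/w))\), which holds with equality at \(w=\ell_v\). For the induction step it suffices to treat a maximizer \(w\) of \(\Phi_G\). A maximizer exists: \(\Phi_G\to-\infty\) as some \(w(v)\to\infty\), and \(w(1+\log(\ell/w))\to0\) as \(w\to0\), so boundary points are no better than restrictions to subgraphs, which are handled by induction. At an interior maximizer, stationarity gives
\[
\log\frac{\ell_u}{w(u)}=C\,w(N(u))\qquad(u\in V).
\]
In particular \(w(u)\le\ell_u\le L\), and \(w(u)\bigl(1+\log(\ell_u/w(u))\bigr)=w(u)\bigl(1+Cw(N(u))\bigr)\).

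\textbf{The one-pivot inequality.} Fix a pivot \(v\) and put \(p=1-e^{-s}\). In \eqref{eq:intro-log-sum}, bound \(\log Z_{G-v}\ge\Phi_{G-v}(w)\) and \(\log Z_{G-N[v]}\ge\Phi_{G-N[v]}(w)\) by induction, and use \(h(p)+p\log\lambda_v=F_{\lambda_v}(s)\ge s\bigl(1+\log(\ell_v/s)\bigr)\) from \cref{lem:partition-support}. With the stationarity identity, the differences telescope:
\[
\Phi_G-\Phi_{G-v}=w(v),\qquad
\Phi_{G-v}-\Phi_{G-N[v]}=w(N(v))\bigl(1+Cw(v)\bigr)+C\,e_w(N(v)),
\]
where \(e_w(N(v))=\sum_{xy\in E(G[N(v)])}w(x)w(y)\). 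Hence
\[
\log Z_G\ge\Phi_G(w)+\Bigl[s\Bigl(1+\log\frac{w(v)}{s}+Cw(N(v))\Bigr)-w(v)-(1-e^{-s})\bigl(w(N(v))(1+Cw(v))+Ce_w(N(v))\bigr)\Bigr],
\]
and it suffices to find one \(v\), with a suitable \(s_v\), for which the bracket is nonnegative.

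\textbf{Main obstacle: choosing the pivot.} Showing that some bracket is nonnegative is the step I expect to be hardest. A single pivot need not work, so I would show that a weighted average of the brackets is nonnegative, with the pivot distributed proportionally to \(w(v)\) (or to a tilted version \(w(v)e^{-\cdots}\)) and \(s_v\) chosen slightly different from \(w(v)\).

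The triangle term \(C\,e_w(N(v))\) is where the hypothesis enters. Summed against the pivot distribution, it is controlled by the weighted mad inequality \eqref{eq:vector-mad} applied to \(G[N(v)]\), which gives \(2e_w(N(v))\le a\max_{N(v)}w\cdot w(N(v))\le aL\,w(N(v))\).

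The factor \(C=e^{aL}\) should absorb this term in the following way. Comparing \(1-e^{-s}\) against \(s\), and \(e^{Cw(N(v))}\) against first-order terms, should let the quadratic excess \(s\,Cw(N(v))\) pay for \(C\,w(v)w(N(v))\). The residual \(aL\)-sized term should then be paid by the multiplicative slack in \(C=e^{aL}\ge1+aL\). For \(a=0\) the triangle term is absent, and I expect the choice \(s_v=w(v)\) with pivot probability proportional to \(w(v)\) to reduce the averaged bracket to a rearrangement of edge sums, as in Martinsson--Steiner's triangle-free computation.

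\textbf{The degree-sequence consequence.} To deduce \eqref{eq:sparse-partition-degrees}, use \(w(u)w(v)\le\tfrac12(w(u)^2+w(v)^2)\), which gives \(\sum_{uv\in E}w(u)w(v)\le\tfrac12\sum_v d(v)w(v)^2\). Then maximize separately for each vertex the function \(w(1+\log(\ell_v/w))-\tfrac12Cd(v)w^2\). The optimum satisfies \(\log(\ell_v/w)=Cd(v)w\), that is, \(w=W(Cd(v)\ell_v)/(Cd(v))\). Its value is \(w+\tfrac12Cd(v)w^2=\mathcal A\bigl(W(Cd(v)\ell_v)\bigr)/(Cd(v))\). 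When \(d(v)=0\), the optimum is \(w=\ell_v\) with value \(\ell_v\), as stated.
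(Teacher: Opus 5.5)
\textbf{There is a genuine gap: with restricted weights on both children, the bracket cannot be made nonnegative.} Your setup is sound up to the bracket. The telescoping identities, the use of \cref{lem:partition-support}, and your deduction of \eqref{eq:sparse-partition-degrees} are correct, and the last matches the paper. The failure holds for every pivot law and every choice of $s_v$, as long as both children receive restrictions of the same $w$. So the step you flag as the main obstacle does not just need more work; it fails.

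Already for $a=0$ and your proposed $s_v=w(v)=m$, the bracket equals $w(N(v))\bigl[(1+m)e^{-m}-1\bigr]<0$ for every non-isolated $v$. Hence every average over pivots is negative.

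Optimizing $s$ does not rescue it. Take $G=K_{d,d}$ with $d$ large and $d\ell<1$, where $\ell=\log(1+\lambda)$. Every stationary point has $w\le\ell<1/d$, and on $(0,1/d)^V$ the Hessian $-\mathrm{diag}(1/w)-A$ is negative definite. So the maximizer is unique, hence constant: $w\equiv W(d\ell)/d$. One checks that the bracket is then negative for every $s>0$. Writing $S=w(N(v))$ and $s=m(1+t)$, to leading order in small $m$ the bracket is $-m[(1+t)\log(1+t)-t]-\tfrac{Sm^2}{2}(1-t^2)$, with $Sm=W(d\ell)^2/d\ll 1$. For $s\ge e^{1+S}m$ the bracket is at most $-m$.

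\textbf{The missing idea is to change the child weights.} In the branch where $v$ is not selected, the paper gives $G-v$ tilted weights: $w^v(x)=e^{m}w(x)$ for $x\in N(v)$ and $w(x)$ elsewhere. Here $m=w(v)$ and $p=1-e^{-m}$, while $G-N[v]$ keeps the restriction of $w$. The bookkeeping is then:
\begin{itemize}
\item $(1-p)$ times the tilted vertex term of each $x\in N(v)$ equals its original term minus $mw(x)$.
\item Edges from $N(v)$ to $V\setminus N[v]$ keep their expected mass.
\item The deleted edges at $v$ give back $CmS$.
\item Edges inside $N(v)$ cost $C(e^m-1)R$, where $R=e_w(N(v))$.
\item $F_{\lambda_v}(m)$ reproduces $v$'s own term via \cref{lem:partition-support}.
\end{itemize}
The net surplus is therefore $B=(C-1)mS-C(e^m-1)R$.

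For $a=0$ this vanishes identically. So any pivot works, with no averaging and no use of stationarity.

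For $a>0$ the paper takes $v$ of maximum weight. Then \eqref{eq:vector-mad} gives $R\le amS/2$; your bound with $L$ in place of $m$ is too weak when $aL$ is large. Stationarity gives $R\le S^2/2\le\frac{S}{2C}\log\frac{L}{m}$. Finally, \cref{lem:sparse-partition-scalar} shows $B\ge0$.
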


The following lemma shows that, after choosing a maximum-weight pivot, the gain from its incident edges dominates the cost from edges inside its neighborhood.

\begin{lemma}\label{lem:sparse-partition-scalar}
Let \(a\ge1\), \(L>0\), \(C=e^{aL}\), and \(0<m\le L\).
Then
\begin{equation}\label{eq:sparse-partition-scalar}
 \frac{C(e^m-1)}2
 \min\left\{am,\frac1C\log\frac Lm\right\}
 \le (C-1)m.
\end{equation}
\end{lemma}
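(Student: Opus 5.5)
The plan is to handle the minimum by splitting at the point where its two arguments cross, and to reduce each case to a one-variable elementary inequality. On \((0,L]\) the function \(m\mapsto am\) is increasing and \(m\mapsto C^{-1}\log(L/m)\) is decreasing; the second vanishes at \(m=L\). Hence there is a unique \(m_0\in(0,L)\) with \(aCm_0=\log(L/m_0)\). The minimum equals \(am\) for \(m\le m_0\) and equals \(C^{-1}\log(L/m)\) for \(m\ge m_0\). In each regime we bound the minimum by the relevant term, keeping the crossing condition as extra information.

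\emph{Large \(m\) (\(m_0\le m\le L\)).} Here it suffices to show
\[
 (e^m-1)\log(L/m)\le 2(C-1)m .
\]
Write \(m=tL\) with \(t\in(0,1]\). I will use \(C-1=e^{aL}-1\ge e^{L}-1\), which holds since \(a\ge1\), and the fact that \((e^m-1)/m\) is increasing in \(m\), so \((e^m-1)/m\le (e^L-1)/L\).
\begin{itemize}
\item If \(t\ge e^{-2}\), then \(\log(1/t)\le 2\) and the claim follows at once.
\item If \(t<e^{-2}\), I will instead use \(e^m-1\le me^m\) and \(e^m\le e^{tL}\). The claim then reduces to \(e^{tL}\log(1/t)\le 2(e^{aL}-1)\), which should follow because \(e^{aL}-1\ge aL\) dominates when \(L\) is small and \(e^{aL}\gg e^{tL}\) when \(L\) is large.
\item The crossing condition \(m\ge m_0\) gives \(\log(1/t)\le aCm\). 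This can be substituted if a direct estimate is too lossy.
\end{itemize}

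\emph{Small \(m\) (\(m\le m_0\)).} Here it suffices to show \(aC(e^m-1)\le 2(C-1)\), i.e.\ \(a(e^m-1)\le 2(1-e^{-aL})\). The crossing condition gives \(aCm\le\log(L/m)\le L/m\), so \(m\le\sqrt{L/(aC)}\), which is exponentially small in \(aL\). With \(e^m-1\le me^m\) this handles the regime where \(aL\) is bounded below.

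\emph{Main obstacle.} The hard part is the regime \(aL\to0\), where \(1-e^{-aL}\approx aL\). There I would use the sharper form of the crossing condition, \(aCm\le\log(L/m)\), together with \(C\ge1\), to get \(am\,e^{am}\le \log(L/m)\,e^{am}/C\). I would then compare \(a(e^m-1)\approx am\) directly with \(aL\). Since \(m\le L\), this is \(am\le aL\) up to a factor, and the factor \(2\) on the right-hand side provides the needed room. Concretely, I would first try to prove the cleaner sufficient inequality \(a(e^m-1)\le 2(1-e^{-am})\) for \(m\le m_0\), which reduces to a statement about \(x=am\) small. If that fails, I would fall back on the explicit Lambert-\(W\) form of \(m_0\) to control the boundary case.
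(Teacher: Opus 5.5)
\textbf{There is a genuine gap.} Your case split is the right one, but you never identify it explicitly, and the estimates you propose inside each case do not go through.

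The crossing equation $aCm_0=\log(L/m_0)$ has the explicit solution $m_0=L/C$, since both sides then equal $aL$. So no Lambert $W$ is needed, and the two regimes are exactly $\log(L/m)\le aL$ and $\log(L/m)\ge aL$.

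\emph{Large $m$, sub-case $t\ge e^{-2}$.} This does not ``follow at once''. Bounding $\log(1/t)\le2$ and then using $(e^m-1)/m\le(e^L-1)/L$ requires $(e^L-1)/L\le e^{aL}-1$. That fails whenever $aL<\log2$, because the left side exceeds $1$. Concretely, take $a=1$, $L=0.1$, $m=m_0\approx0.0905$. Your intermediate bound $2(e^m-1)\approx0.19$ is about ten times the target $2(C-1)m\approx0.019$.

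\emph{Large $m$, sub-case $t<e^{-2}$.} The reduced inequality $e^{tL}\log(1/t)\le2(e^{aL}-1)$ is false in general. It fails for every $t<e^{-2}$ once $aL<\log2$, and it fails as $t\to0$ for any fixed $L$. Only the crossing condition could rescue it, and you mention that condition but never use it.

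\emph{Small $m$.} Weakening $\log(L/m)$ to $L/m$ discards most of the crossing information: you get $m\le\sqrt{L/(aC)}$ instead of $m\le L/C$. The two sub-regimes you sketch ($aL$ bounded below, and $aL\to0$) are also never shown to overlap. So this case is not closed either.

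\textbf{The missing ingredient} is one comparison that keeps track of $a$. Since $m\le L\le aL$ (this is where $a\ge1$ is used) and $(e^u-1)/u$ is increasing, you get $e^m-1\le m(C-1)/(aL)$. Your bound $C-1\ge e^L-1$ discards exactly this, and that is what makes the regime $aL\to0$ look delicate. With the comparison, each case is one line:
\begin{itemize}
\item If $m\ge L/C$, then $\log(L/m)\le aL$, and the left side is at most $\tfrac12(e^m-1)aL\le\tfrac12(C-1)m$.
\item If $m\le L/C$, the left side is at most $\tfrac12aCm(e^m-1)\le\tfrac12(C-1)m\cdot Cm/L\le\tfrac12(C-1)m$.
\end{itemize}
This is the paper's proof written in the variables $x=aL$, $t=m/L$. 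There the two ratios are bounded by $te^x/2$ and $\log(1/t)/(2x)$, and the split is at $t=e^{-x}$, which is exactly your $m_0$. The inequality holds with a factor $2$ to spare, so no fine analysis near $aL=0$ is needed.
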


\begin{proof}
Write \(x=aL\) and \(t=m/L\).
Then \(C=e^x\), \(0<t\le1\), and \(m\le L\le x\).
The function
\[
 u\longmapsto\frac{e^u-1}{u}
\]
is increasing on \((0,\infty)\).
After division by the right-hand side of~\eqref{eq:sparse-partition-scalar}, the ratios corresponding to the two expressions in the minimum are bounded by
\[
 \frac{a(e^m-1)}{2(1-e^{-x})}
 \le \frac{te^x}{2}
 \qquad\text{and}\qquad
 \frac{(e^m-1)\log(1/t)}{2(C-1)m}
 \le \frac{\log(1/t)}{2x},
\]
respectively.
If \(\log(1/t)\le x\), the second ratio is at most \(1/2\).
Otherwise the first ratio is \(e^{x-\log(1/t)}/2<1/2\).
This proves the lemma.
\end{proof}

\begin{proof}[Proof of \cref{thm:IPSparseNbhdsMulti}]
Fix \(G,\lambda,L,C\) as in the theorem.
For every induced subgraph \(H\) of \(G\) and \(w\in\RRpos^{V(H)}\), define the potential
\[
 \Phi_H(w)=
 \sum_{x\in V(H)}
 w(x)\left(1+\log\frac{\ell_x}{w(x)}\right)
 -C\sum_{xy\in E(H)}w(x)w(y).
\]
The definition extends continuously to nonnegative \(w\) by giving a zero coordinate term the value zero.

We prove by induction on \(|V(H)|\) that
\begin{equation}\label{eq:sparse-partition-induction}
 \log Z_H(\lambda)\ge\max_{w\in\RRpos^{V(H)}}\Phi_H(w).
\end{equation}
The base case of the empty graph is immediate.

Suppose that \(H\) is nonempty.
The potential $\Phi_H(w)$ tends to \(-\infty\) when any coordinate tends to infinity, so it has a maximum.
No maximizing coordinate is zero: increasing such a coordinate from zero has a gain of order \(\varepsilon\log(1/\varepsilon)\) from the first sum and an edge cost of order \(\varepsilon\) from the second.
Thus a maximizer \(w\) is positive and satisfies for every $x\in V(H)$,
\begin{equation}\label{eq:sparse-partition-stationarity}
 \log\frac{\ell_x}{w(x)}
 =C\sum_{y\in N_H(x)}w(y).
\end{equation}

Choose \(v\) so that \(m=w(v)\) is maximal, and write
\[
 R=\sum_{xy\in E(H[N_H(v)])}w(x)w(y),\qquad S=w(N_H(v)).
\]
In particular,~\eqref{eq:sparse-partition-stationarity} gives
\begin{equation}\label{eq:sparse-partition-mS}
 m\le\ell_v\le L
 \qquad\text{and}\qquad
 S=\frac1C\log\frac{\ell_v}{m}
 \le\frac1C\log\frac Lm.
\end{equation}

Set \(p=1-e^{-m}\).
On \(H-v\), use the weights
\[
 w^v(x)=
 \begin{cases}
  e^m w(x),&x\in N_H(v),\\
  w(x),&x\notin N_H[v],
 \end{cases}
\]
and on \(H-N_H[v]\), use the restriction of \(w\).
The induction hypothesis and the log-sum identity \eqref{eq:intro-log-sum} yield
\begin{align}
 \log Z_H(\lambda)
 \ge{}&F_{\lambda_v}(m)
 +(1-p)\Phi_{H-v}(w^v)
 +p\Phi_{H-N_H[v]}(w).
 \label{eq:sparse-partition-step}
\end{align}
For \(x\in N_H(v)\),
\[
 (1-p)w^v(x)
 \left(1+\log\frac{\ell_x}{w^v(x)}\right)
 =
 w(x)\left(1+\log\frac{\ell_x}{w(x)}\right)-mw(x).
\]
Moreover, partitioning the edges according to their relation to \(N_H[v]\) gives
\begin{align*}
 &(1-p)\sum_{xy\in E(H-v)}w^v(x)w^v(y)
 +p\sum_{xy\in E(H-N_H[v])}w(x)w(y)\\
 &\qquad-\sum_{xy\in E(H)}w(x)w(y)
 =-mS+(e^m-1)R.
\end{align*}
It follows from \cref{lem:partition-support} that the right-hand side of~\eqref{eq:sparse-partition-step} minus \(\Phi_H(w)\) is at least
\begin{equation}\label{eq:sparse-partition-surplus}
 B=(C-1)mS-C(e^m-1)R.
\end{equation}

If \(R=0\), then \(B\ge0\).
Otherwise \(H[N_H(v)]\) contains an edge, so \(a\ge1\).
Since every weight in \(N_H(v)\) is at most \(m\), the weighted maximum-average-degree inequality \eqref{eq:vector-mad} (developed in the proof of \cref{prop:Theta}) gives
\[
 R
 \le m\sum_{xy\in E(H[N_H(v)])}\min\{w(x),w(y)\}
 \le\frac{amS}{2}.
\]
We also have
\[
 2R
 =\sum_{x\in N_H(v)}w(x)
   \sum_{y\in N_H(v)\cap N_H(x)}w(y)
 \le S^2,
\]
and hence
\[
 R\le\frac{S^2}{2}
 \le\frac{S}{2C}\log\frac Lm,
\]
where the second inequality is~\eqref{eq:sparse-partition-mS}.
Consequently,
\[
 R\le\frac S2
 \min\left\{am,\frac1C\log\frac Lm\right\}.
\]
\Cref{lem:sparse-partition-scalar} now shows that \(B\ge0\).
Thus~\eqref{eq:sparse-partition-step} is at least \(\Phi_H(w)\), which proves~\eqref{eq:sparse-partition-induction}.

Taking \(H=G\) proves~\eqref{eq:sparse-partition}.
Finally,
\[
 2w(u)w(v)\le w(u)^2+w(v)^2
\]
for every edge \(uv\), and hence the right-hand side of \eqref{eq:sparse-partition} is at least
\[
 \max_{w\in\RRpos^V}
 \sum_{v\in V}\left[
  w(v)\left(1+\log\frac{\ell_v}{w(v)}\right)
  -\frac{Cd(v)}2w(v)^2
 \right].
\]
This optimization separates over the vertices.
If \(d(v)>0\), its maximizer satisfies
\[
 \log\frac{\ell_v}{w(v)}=Cd(v)w(v),
 \qquad
 w(v)=\frac{W(Cd(v)\ell_v)}{Cd(v)},
\]
and substitution gives the corresponding summand in \eqref{eq:sparse-partition-degrees}.
For \(d(v)=0\), the maximum is attained at \(w(v)=\ell_v\) and has value \(\ell_v\).
\end{proof}

\begin{proof}[Proof of \cref{thm:IPSparseNbhdsDegrees}]
The assertion is immediate for the empty graph.
Otherwise apply \cref{thm:IPSparseNbhdsMulti} with \(\lambda_v=\lambda\) for every \(v\).
Then
\[
 L=\log(1+\lambda),
 \qquad
 C=e^{aL}=(1+\lambda)^a,
\]
and~\eqref{eq:sparse-partition-degrees} is exactly the claimed bound.
\end{proof}

\subsubsection{A more precise bound}

For $a,x,z\ge0$, define
\begin{equation}\label{eq:mad-bvdhk-profile}
 \mathcal B_{z,a}(x)=
 \begin{cases}
 \displaystyle
 \frac{\mathcal A(W(zx))-\mathcal A(W(z(a+2)))}
      {x-(a+2)},&x\ne a+2,\\[3mm]
 \displaystyle\frac{W(z(a+2))}{a+2},&x=a+2.
 \end{cases}
\end{equation}
The second line is the removable singularity in the first.
Thus, the right side of~\eqref{eq:IPSparseNbhdsBvdHKBound} is $\mathcal B_{\lambda,a}(d)$.

\begin{proof}[Proof of \cref{thm:IPSparseNbhdsBvdHK}]
The result is immediate when $\lambda=0$, so assume $\lambda>0$.
Write $c=a+2$ and define
\[
 P(x)=\mathcal A(W(\lambda x)),
 \qquad
 F(x)=\mathcal B_{\lambda,a}(x).
\]
We first establish the analytic inequality needed by the induction.
Direct differentiation gives, for $x>0$,
\begin{align}
 P'(x)&=\frac{W(\lambda x)}x
       =\lambda e^{-W(\lambda x)},\notag\\
 P''(x)&=-\frac{W(\lambda x)^2}
 {x^2(1+W(\lambda x))}\le0,\notag\\
 P'''(x)&=
 \frac{W(\lambda x)^3(2W(\lambda x)+3)}
 {x^3(1+W(\lambda x))^3}\ge0.
 \label{eq:mad-bvdhk-P-derivatives}
\end{align}
These derivatives extend continuously to zero, giving $P'(0)=\lambda$.
We also have
\begin{equation}\label{eq:mad-bvdhk-P''}
 P''(x)=-\frac{P'(x)^2}{1+x P'(x)}.
\end{equation}
Moreover,
\begin{equation}\label{eq:mad-bvdhk-divided-difference}
 F(x)=\int_0^1P'\bigl(c+s(x-c)\bigr)\,\mathrm ds.
\end{equation}
It follows that $F$ is positive, nonincreasing, and convex on $[0,\infty)$.

We claim that, for every $x>0$,
\begin{equation}\label{eq:mad-bvdhk-criterion}
 \exp\bigl(-xF'(x)-F(x)\bigr)
 +\lambda
 \exp\bigl(x(a+1-x)F'(x)-(x+1)F(x)\bigr)
 \ge1.
\end{equation}
Set $u=W(\lambda x)=xP'(x)$.
Differentiating $(x-c)F(x)=P(x)-P(c)$ gives
\begin{equation}\label{eq:mad-bvdhk-W-identity}
 xF(x)+x(x-c)F'(x)=u.
\end{equation}
If
\[
 \beta_0=-xF'(x)-F(x),
 \qquad
 \beta_1=x(c-1-x)F'(x)-(x+1)F(x),
\]
then
\begin{equation}\label{eq:mad-bvdhk-beta-relations}
 \beta_1-\beta_0=-u,
 \qquad
 \beta_0=\frac{cF(x)-xP'(x)}{x-c},
\end{equation}
with the quotient understood continuously at $x=c$.
Hence \eqref{eq:mad-bvdhk-criterion} is equivalent to
\begin{equation}\label{eq:mad-bvdhk-log-target}
 Q(x):=\beta_0+\log(1+P'(x))\ge0.
\end{equation}

To verify this, use the elementary inequality
$\log(1+z)\ge2z/(2+z)$ for $z\ge0$ and define
\begin{equation}\label{eq:mad-bvdhk-T}
 T(x)=c\bigl(P(x)-P(c)\bigr)-xP'(x)(x-c)
      +\frac{2P'(x)(x-c)^2}{2+P'(x)}.
\end{equation}
For $x\ne c$, equations~\eqref{eq:mad-bvdhk-beta-relations}
and~\eqref{eq:mad-bvdhk-T} give
\begin{equation}\label{eq:mad-bvdhk-T-lower}
 Q(x)
 \ge \beta_0+\frac{2P'(x)}{2+P'(x)}
 =\frac{T(x)}{(x-c)^2}.
\end{equation}
Using~\eqref{eq:mad-bvdhk-P''}, direct differentiation gives
\begin{equation}\label{eq:mad-bvdhk-T-prime}
 T'(x)=
 \frac{(x-c)P'(x)^2}{(2+P'(x))^2(1+xP'(x))}R(x),
 \qquad
 R(x)=4(c-1)-2P'(x)-xP'(x)^2.
\end{equation}
Another application of~\eqref{eq:mad-bvdhk-P''} gives
\[
 R'(x)=P'(x)^2,
 \qquad
 R(0)=4(c-1)-2\lambda\ge0,
\]
where the last inequality uses $\lambda\le2(c-1)=2(a+1)$.
Thus $R$ is nonnegative, so~\eqref{eq:mad-bvdhk-T-prime}
shows that $T$ is nonincreasing on $[0,c]$ and nondecreasing on
$[c,\infty)$. Since $T(c)=0$, we have $T(x)\ge0$ for all $x\ge0$.
Together with~\eqref{eq:mad-bvdhk-T-lower}, this proves
\eqref{eq:mad-bvdhk-log-target}; the point $x=c$ follows by continuity.
Letting $x\downarrow0$ in \eqref{eq:mad-bvdhk-criterion} also gives
\begin{equation}\label{eq:mad-bvdhk-endpoint}
 F(0)\le\log(1+\lambda).
\end{equation}

We now prove the theorem by induction.
Every induced subgraph $H$ of $G$ satisfies
\begin{equation}\label{eq:mad-hereditary-triangle-bound}
 3|\mathcal T(H)|
 =\sum_{v\in V(H)}|E(H[N_H(v)])|
 \le\frac a2\sum_{v\in V(H)}d_H(v)
 =a|E(H)|.
\end{equation}
Indeed, $H[N_H(v)]$ is a subgraph of $G[N_G(v)]$, so the maximum average degree hypothesis applies term by term.
Define
\[
 \psi(H)=
 \begin{cases}
  |V(H)|F(d(H)),&V(H)\ne\emptyset,\\
  0,&V(H)=\emptyset,
 \end{cases}
\]
where $d(H)$ is the average degree of $H$.
We first show that every nonempty $H$ contains a vertex $v$ such that
\begin{equation}\label{eq:mad-bvdhk-bellman-recurrence}
 e^{\psi(H)}
 \le e^{\psi(H-v)}+\lambda e^{\psi(H-N_H[v])}.
\end{equation}
If $H$ is edgeless, this follows from \eqref{eq:mad-bvdhk-endpoint}: for every $v\in V(H)$,
\[
 e^{\psi(H)}
 =e^{|V(H)|F(0)}
 \le(1+\lambda)e^{(|V(H)|-1)F(0)}
 =e^{\psi(H-v)}+\lambda e^{\psi(H-N_H[v])}.
\]

Otherwise, let $n=|V(H)|$, $m=|E(H)|$, and $d=2m/n>0$.
Set
\[
 H_v^{\mathrm{out}}=H-v,
 \qquad
 H_v^{\mathrm{in}}=H-N_H[v],
\]
and, for $\sigma\in\{\mathrm{out},\mathrm{in}\}$, write $n_v^\sigma=|V(H_v^\sigma)|$ and $m_v^\sigma=|E(H_v^\sigma)|$.
Define
\[
 X_v^\sigma=
 (n-n_v^\sigma)(dF'(d)-F(d))
 -2(m-m_v^\sigma)F'(d).
\]
Because \(F\) is convex, the function
\[
 (n',m')\longmapsto n'F\left(\frac{2m'}{n'}\right)
 \qquad(n'>0)
\]
is convex.
Its tangent inequality gives
\begin{equation}\label{eq:mad-bvdhk-tangent}
 \psi(H_v^\sigma)-\psi(H)\ge X_v^\sigma.
\end{equation}
Indeed, the gradient of the aforementioned function at \((n,m)\) is
\[
 \bigl(F(d)-dF'(d),\,2F'(d)\bigr),
\]
which gives exactly \(X_v^\sigma\).
If the child graph is empty, then \(n_v^\sigma=m_v^\sigma=0\) and \(X_v^\sigma=-\psi(H)\), so the same inequality holds with equality.

Choose a vertex $\boldsymbol v$ uniformly from $V(H)$.
Elementary deletion averages give
\begin{equation}\label{eq:mad-bvdhk-deletion-averages}
\begin{aligned}
 \EE(n-n_{\boldsymbol v}^{\mathrm{out}})&=1,
 &
 \EE(m-m_{\boldsymbol v}^{\mathrm{out}})&=d,\\
 \EE(n-n_{\boldsymbol v}^{\mathrm{in}})&=d+1,
 &
 \EE(m-m_{\boldsymbol v}^{\mathrm{in}})
 &=\frac1n\left(\sum_{u\in V(H)}d_H(u)^2
       -3|\mathcal T(H)|\right)\\
 & & &\ge d^2-\frac{ad}{2}.
\end{aligned}
\end{equation}
For the identity in the last line, an edge $yz$ is removed from the graph for $|N_H[y]\cup N_H[z]|$ choices of the pivot.
Since \(yz\in E(H)\),
\[
 |N_H[y]\cup N_H[z]|
 =d_H(y)+d_H(z)-|N_H(y)\cap N_H(z)|.
\]
Summing the first two terms over the edges gives \(\sum_{u\in V(H)}d_H(u)^2\), while the common-neighbor term counts every triangle three times.
The inequality follows from Cauchy--Schwarz and \eqref{eq:mad-hereditary-triangle-bound}.
Note that in the case $a=0$ these arguments are exactly those of Buys, van den Heuvel, and Kang~\cite{BHK25}, and we observe an extra term arising from triangles in~\eqref{eq:mad-bvdhk-deletion-averages}.

Since $F'(d)\le0$, equations~\eqref{eq:mad-bvdhk-tangent} and \eqref{eq:mad-bvdhk-deletion-averages} imply
\begin{align*}
 \EE X_{\boldsymbol v}^{\mathrm{out}}
 &=-dF'(d)-F(d),\\
 \EE X_{\boldsymbol v}^{\mathrm{in}}
 &\ge d(a+1-d)F'(d)-(d+1)F(d).
\end{align*}
Jensen's inequality and~\eqref{eq:mad-bvdhk-criterion} now give
\[
 \EE\left(e^{X_{\boldsymbol v}^{\mathrm{out}}}
       +\lambda e^{X_{\boldsymbol v}^{\mathrm{in}}}\right)\ge1.
\]
Some vertex makes the expression in parentheses at least one, and \eqref{eq:mad-bvdhk-tangent} proves \eqref{eq:mad-bvdhk-bellman-recurrence}.

Finally, we prove $Z_H(\lambda)\ge e^{\psi(H)}$ by induction on $|V(H)|$.
It is immediate for the empty graph.
For nonempty $H$, choose $v$ satisfying~\eqref{eq:mad-bvdhk-bellman-recurrence}.
The induction hypothesis and the deletion identity~\eqref{eq:intro-deletion} give
\[
 Z_H(\lambda)
 =Z_{H-v}(\lambda)+\lambda Z_{H-N_H[v]}(\lambda)
 \ge e^{\psi(H-v)}+\lambda e^{\psi(H-N_H[v])}
 \ge e^{\psi(H)}.
\]
Taking $H=G$ gives $|V(G)|^{-1}\log Z_G(\lambda)\ge F(d)$, which is exactly \eqref{eq:IPSparseNbhdsBvdHKBound}.
\end{proof}

\subsection{Fractionally colorable neighborhoods}\label{subsec:IPRColNbdhs}

Here the potential used for \cref{thm:IPSparseNbhdsMulti} works with the constant $C$ replaced by \(K(r)\) from~\eqref{eq:K-equation}.

\begin{theorem}\label{thm:IPRColMulti}
Fix \(r\ge1\), let \(G=(V,E)\) have fractionally \(r\)-colorable neighborhoods, and let \(\lambda\in\RRpos^V\).
Write \(\ell_v=\log(1+\lambda_v)\).
Then
\begin{equation}\label{eq:r-partition}
 \log Z_G(\lambda)
 \ge
 \sup_{w\in\RRpos^V}
 \left\{
  \sum_{v\in V}w(v)\left(1+\log\frac{\ell_v}{w(v)}\right)
  -K(r)\sum_{uv\in E}w(u)w(v)
 \right\}.
\end{equation}
Consequently,
\begin{equation}\label{eq:r-partition-multidegrees}
 \log Z_G(\lambda)
 \ge
 \sum_{v\in V}
 \frac{\mathcal A\bigl(W(K(r)d(v)\ell_v)\bigr)}
      {K(r)d(v)},
\end{equation}
where the summand for \(d(v)=0\) is interpreted as \(\ell_v\).
\end{theorem}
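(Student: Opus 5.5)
The plan is to copy the induction in the proof of \cref{thm:IPSparseNbhdsMulti} and change only the out-branch. In the sparse proof the neighbors of the pivot are boosted to $e^m w$, which makes edges inside $N_H(v)$ costly; here those edges will be killed by a fractional coloring certificate, exactly as in the certificate branch of \cref{sec:r-colorable}. The deduction of~\eqref{eq:r-partition-multidegrees} from~\eqref{eq:r-partition} is then word for word the final paragraph of the proof of \cref{thm:IPSparseNbhdsMulti}, with $C$ replaced by $K=K(r)$ (bound $2w(u)w(v)\le w(u)^2+w(v)^2$, then optimize each vertex separately via $W$). So everything rests on the inductive inequality $\log Z_H(\lambda)\ge\sup_w\Phi_H(w)$, where $\Phi_H$ is the potential from that proof with constant $K$; the hypothesis is hereditary, as noted in \cref{prop:r-parametrized}.

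For the induction step, take a maximizer $w$ of $\Phi_H$. As before, it is positive and satisfies the stationarity relation~\eqref{eq:sparse-partition-stationarity} with $C=K$. Fix a pivot $v$ with $m=w(v)$ and $S=w(N_H(v))$, a branch probability $p=1-q$, and a law $\nu_v$ on $\cI(H[N_H(v)])$ with all marginals $1/r$.
\begin{itemize}
\item In the in-branch, use $w$ restricted to $H-N_H[v]$, as in the sparse proof.
\item In the out-branch, draw $J\sim\nu_v$. Set $w^J(x)=Aw(x)$ for $x\in J$, $w^J(x)=0$ for $x\in N_H(v)\setminus J$, and $w^J=w$ elsewhere; this is allowed because $\Phi$ extends continuously to nonnegative weights.
\end{itemize}
Since a maximum dominates an average, $\log Z_{H-v}\ge\EE_J\Phi_{H-v}(w^J)$. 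Because $J$ is independent, no edge inside $N_H(v)$ survives. This is the point where fractional colorability, rather than a Hall ratio bound, is used. Feeding both branches into~\eqref{eq:intro-log-sum}, and choosing $A=r/q$ so that $qA/r=1$, the bookkeeping is as follows.
\begin{itemize}
\item Edges between $N_H(v)$ and $V\setminus N_H[v]$ keep expected weight factor $q\cdot(A/r)=1$ in the out-branch and are removed in the in-branch.
\item Edges incident to $v$, and edges inside $N_H(v)$, disappear.
\item The neighbor vertex terms change by exactly $-S\log A=-S\log(r/q)$.
\item Edges outside $N_H[v]$ are unchanged in total.
\end{itemize}
With $s=-\log q$, \cref{lem:partition-support} bounds the pivot contribution below by $s(1+\log(\ell_v/s))$. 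The surplus over $\Phi_H(w)$ is then at least
\[
 s\Bigl(1+\log\frac{\ell_v}{s}\Bigr)-m\Bigl(1+\log\frac{\ell_v}{m}\Bigr)+KmS-S\log\frac rq .
\]
A single deterministic pivot does not obviously make this nonnegative: with $s=m$ the requirement becomes $(K-1)m\ge\log r$, which is not scale-free.

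I therefore expect the main obstacle to be the choice of pivot and of $q$. My first attempt is to average over a random pivot, mimicking the $w(x)$-weighted pivot in \cref{sec:r-colorable}, and to take $q=1-\theta$ with $\theta$ tied to the stationarity equations. Averaging the surplus over $v$, the terms $KmS$ and $S\log(r/q)$ become sums over edges of $w(u)w(v)$ times constants. The stationarity relation $\log(\ell_x/w(x))=K\,w(N_H(x))$ rewrites the pivot entropy gain in the same edge-sum form. After that, nonnegativity should reduce to the scalar condition $\theta K\ge\log(r/(1-\theta))$, which holds for $\theta=\theta_*$ by~\eqref{eq:r-optimum}; for $r=1$ I would take a limit $\theta\downarrow0$, as in the proof of \cref{thm:LDRColNbdhs}.

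The delicate part is that~\eqref{eq:intro-log-sum} is a maximum over $p$ for a fixed pivot. Converting an average over pivots into a single good pivot needs either a convexity (Jensen) step in the spirit of the proof of \cref{thm:IPSparseNbhdsBvdHK}, or a pivot-dependent $q_v$ chosen so that every pivot's surplus is nonnegative. I would try the pivot-dependent choice $q_v=e^{-w(v)}$ first and check whether the supremum form of~\eqref{eq:r-partition} absorbs the remaining loss.
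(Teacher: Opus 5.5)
Your bookkeeping for the pure-certificate out-branch is correct, and you have identified the right obstruction: with $s=m$ the surplus is $S\bigl((K-1)m-\log r\bigr)+KR$, which is negative for small $m$ when $r>1$. The proposal stops there, though, and neither remedy you suggest can close the gap.

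\textbf{Why the suggested remedies fail.} The choice $q_v=e^{-w(v)}$ is exactly the case $s=m$ you already ruled out. Averaging over pivots needs no Jensen step: each pivot gives a valid lower bound on $\log Z_H(\lambda)$, so lower bounds can be averaged freely. But averaging cannot help here. At a maximizer, the stationarity relation together with optimizing over $s$ gives the best pure-certificate surplus at a pivot $v$ as $\ell_v e^{-S}-\ell_v e^{-KS}-S\log r+KR=S\bigl((K-1)\ell_v-\log r\bigr)+O(S^2)$. So once the fugacities are small, every non-isolated pivot has negative surplus. The reason is that the certificate costs at least $S\log r$, which is linear in the weights, while all the gains are quadratic. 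Fixing $q=1-\theta_*$ is worse still, since then $s(1+\log(\ell_v/s))\to-\infty$ as $\ell_v\to0$. Hence the scalar condition $\theta K\ge\log(r/(1-\theta))$ does not actually emerge.

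\textbf{The missing idea.} You need to decouple the boost $A$ from the selection probability by mixing the certificate child with a plain deletion child that keeps the weights $w$ unchanged. Keep $p=1-e^{-m}$, so that \cref{lem:partition-support} cancels the pivot's own term exactly. Then set $\theta=\max\{1-1/K,p\}$, $\alpha=p/\theta$ and $A=r/(1-\theta)$.

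Since $(1-\alpha)+\alpha(1-\theta)=1-p$ and $Z_{H-v}(\lambda)\ge Z_{H_J}(\lambda)$, you get $(1-p)\log Z_{H-v}(\lambda)\ge(1-\alpha)\Phi_{H-v}(w)+\alpha(1-\theta)\EE_J\Phi_{H_J}(w^J)$. With this split:
\begin{itemize}
\item edges from $N_H(v)$ to the rest keep total coefficient $(1-\alpha)+\alpha(1-\theta)A/r=1$;
\item the neighbor vertex terms lose only $\alpha S\log A$;
\item edges inside $N_H(v)$ survive only in the plain child.
\end{itemize}
So the surplus is $(Km-\alpha\log A)S+K\alpha R$.

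Finally, $\alpha\log A\le Km$ holds for every $m>0$:
\begin{itemize}
\item If $m\le\log K$, then $\log A=\log(rK)=K-1$ and $\alpha\log A=K(1-e^{-m})\le Km$.
\item If $m\ge\log K$, then $\alpha=1$, which is exactly your pure-certificate branch, and $\log r=K-1-\log K\le(K-1)\log K\le(K-1)m$.
\end{itemize}
This works for every positive $w$ and every pivot, so you need no maximizer, no stationarity relation, and no averaging over pivots.
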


\begin{proof}
Write \(K=K(r)\).
For an induced subgraph \(H\) of \(G\) and \(w\in\RRpos^{V(H)}\), define the potential
\[
 \Psi_H(w)=
 \sum_{x\in V(H)}
 w(x)\left(1+\log\frac{\ell_x}{w(x)}\right)
 -K\sum_{xy\in E(H)}w(x)w(y).
\]
We prove by induction that
\begin{equation}\label{eq:r-partition-induction}
 \log Z_H(\lambda)\ge\sup_{w\in\RRpos^{V(H)}}\Psi_H(w)
\end{equation}
for every induced subgraph \(H\).
The empty graph is immediate.
Suppose that \(H\) is nonempty, fix \(w\in\RRpos^{V(H)}\), and choose any pivot \(v\).
Set
\[
 m=w(v),\qquad
 p=1-e^{-m},\qquad
 \theta=\max\left\{1-\frac1K,p\right\},\qquad
 \alpha=\frac{p}{\theta},\qquad
 A=\frac{r}{1-\theta}.
\]
These parameters satisfy
\begin{equation}\label{eq:r-partition-parameters}
 0<\alpha\le1,\qquad
 \alpha\theta=p,\qquad
 \frac{(1-\theta)A}{r}=1,\qquad
 \alpha\log A\le Km.
\end{equation}
Only the last inequality needs verification.
If \(m\le\log K\), a case that can occur only when \(K>1\), then
\[
 \theta=1-\frac1K,\qquad
 \log A=\log(rK)=K-1,
\]
and hence
\[
 \alpha\log A=K(1-e^{-m})\le Km.
\]
If \(m\ge\log K\), then \(\theta=p\), \(\alpha=1\), and
\[
 \alpha\log A=m+\log r\le Km.
\]
For the last inequality, use
\[
 \log r=K-1-\log K\le(K-1)\log K\le(K-1)m;
\]
the middle inequality is equivalent to \(K\log K\ge K-1\).
This also covers \(K=1\).

Let \(J\) be a random independent set of \(H[N_H(v)]\) with \(\PP(x\in J)=1/r\) for every \(x\in N_H(v)\).
Define
\[
 H_J=H-\bigl(\{v\}\cup(N_H(v)\setminus J)\bigr),
\]
and give its vertices the weights
\[
 w^J(x)=
 \begin{cases}
  A w(x),&x\in J,\\
  w(x),&x\notin N_H[v].
 \end{cases}
\]
Every \(H_J\) is an induced subgraph of \(H-v\), and all fugacities are positive, so \(Z_{H-v}(\lambda)\ge Z_{H_J}(\lambda)\).
Since
\[
 (1-\alpha)+\alpha(1-\theta)=1-p,
\]
the induction hypothesis gives
\begin{align}
 (1-p)\log Z_{H-v}(\lambda)
 \ge{}&(1-\alpha)\Psi_{H-v}(w)\notag\\
 &+\alpha(1-\theta)\EE_J\Psi_{H_J}(w^J).
 \label{eq:r-partition-nonselection}
\end{align}
Combining this with the induction bound on \(H-N_H[v]\) and applying the log-sum identity~\eqref{eq:intro-log-sum} with selection weight \(p\), we obtain
\begin{align}
 \log Z_H(\lambda)
 \ge{}&F_{\lambda_v}(m)
 +(1-\alpha)\Psi_{H-v}(w)
 +p\Psi_{H-N_H[v]}(w)\notag\\
 &+\alpha(1-\theta)\EE_J\Psi_{H_J}(w^J).
 \label{eq:r-partition-step}
\end{align}

For brevity, write
\[
 S=w(N_H(v)),
 \qquad
 R=\sum_{xy\in E(H[N_H(v)])}w(x)w(y).
\]
For each \(x\in N_H(v)\), the expected vertex term in the three child potentials is
\begin{gather*}
 (1-\alpha)w(x)\left(1+\log\frac{\ell_x}{w(x)}\right)+\frac{\alpha(1-\theta)}r
  A w(x)\left(1+\log\frac{\ell_x}{A w(x)}\right) \\ =w(x)\left(1+\log\frac{\ell_x}{w(x)}\right)
   -\alpha w(x)\log A.
\end{gather*}
Edges incident to \(v\) disappear, producing a gain \(KmS\).
An edge with one endpoint in \(N_H(v)\) and the other outside \(N_H[v]\) has unchanged expected mass by \eqref{eq:r-partition-parameters}.
Since \(J\) is independent, an edge inside \(N_H(v)\) survives only in the plain child, producing a gain \(K\alpha R\).
It follows that the child-potential terms on the right of~\eqref{eq:r-partition-step}, minus \(\Psi_H(w)\), equal
\[
 -m\left(1+\log\frac{\ell_v}{m}\right)
 +(Km-\alpha\log A)S+K\alpha R.
\]
\Cref{lem:partition-support} cancels the first term, and the remaining terms are nonnegative by \eqref{eq:r-partition-parameters}.
Thus \(\log Z_H(\lambda)\ge\Psi_H(w)\).
Since \(w\) was arbitrary, \eqref{eq:r-partition-induction} follows, completing the induction and proving~\eqref{eq:r-partition}.

Finally, \(2w(u)w(v)\le w(u)^2+w(v)^2\) shows that the right-hand side of~\eqref{eq:r-partition} is at least
\[
 \sup_{w\in\RRpos^V}
 \sum_{v\in V}\left[
  w(v)\left(1+\log\frac{\ell_v}{w(v)}\right)
  -\frac{Kd(v)}2w(v)^2
 \right].
\]
For \(d(v)>0\), the maximizing coordinate is
\[
 w(v)=\frac{W(Kd(v)\ell_v)}{Kd(v)},
\]
and its value is \(\mathcal A(W(Kd(v)\ell_v))/(Kd(v))\).
For an isolated vertex the maximum is \(\ell_v\).
This proves \eqref{eq:r-partition-multidegrees}.
\end{proof}

\begin{proof}[Proof of \cref{thm:IPRColDegrees}]
Apply~\eqref{eq:r-partition-multidegrees} with \(\lambda_v=\lambda\) for every \(v\).
This gives \eqref{eq:IPRColDegreesBound}, including its continuous value at isolated vertices.
\end{proof}

When \(r=1\), the neighborhood hypothesis is exactly triangle-freeness and \(K(1)=1\).
Thus \cref{thm:IPRColMulti,thm:IPSparseNbhdsMulti} agree in the case \(a=0\).

\section{Tool and computational resource disclosure}

The author used GPT-5.6 Sol for literature search, explorations of ideas, and checking the manuscript. 
The author also pursued a formalization of the main results presented here in Lean 4 with Mathlib, in which Lean proofs were generated almost exclusively by AI.
This formalization will be made public shortly after the submission of this paper to a preprint server.

\appendix
\section{Deferred calculations and entropy analysis}\label[appendix]{app:details}

\begin{proposition}\label{prop:summands}
The summands in \cref{thm:IPSparseNbhdsDegrees,thm:IPRColDegrees} are decreasing and convex in \(d(v)\) for \(d(v)\ge0\).
\end{proposition}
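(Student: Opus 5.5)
Both summands have the form $\sigma(d)=\mathcal A(W(\kappa d\ell))/(\kappa d)$ with $\kappa=(1+\lambda)^a$ or $\kappa=K(r)$ a positive constant and $\ell=\log(1+\lambda)>0$. Rescaling $x=\kappa d$ is a positive linear change of variable and preserves monotonicity and convexity. So the plan is to prove that
\[
 \phi(x)=\frac{\mathcal A(W(\ell x))}{x}\qquad(x>0),\qquad \phi(0)=\ell,
\]
is decreasing and convex on $[0,\infty)$.

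The first step is an integral representation, mirroring the divided-difference formula \eqref{eq:mad-bvdhk-divided-difference}. Set $P(x)=\mathcal A(W(\ell x))$. The derivative computation \eqref{eq:mad-bvdhk-P-derivatives}, with $\lambda$ replaced by $\ell$, gives $P(0)=0$ and $P'(x)=W(\ell x)/x=\ell e^{-W(\ell x)}$, with $P'(0)=\ell$. Hence
\[
 \phi(x)=\frac{P(x)-P(0)}{x}=\int_0^1 P'(sx)\,\mathrm ds .
\]
This also shows continuity at $0$ with value $P'(0)=\ell$, matching the stated convention for isolated vertices.

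The second step is to differentiate under the integral sign:
\[
 \phi'(x)=\int_0^1 sP''(sx)\,\mathrm ds,\qquad \phi''(x)=\int_0^1 s^2P'''(sx)\,\mathrm ds .
\]
By \eqref{eq:mad-bvdhk-P-derivatives}, $P''\le0$ and $P'''\ge0$ on $(0,\infty)$, so $\phi$ is nonincreasing and convex. Note that those sign computations did not use the restriction on $\lambda$.

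Two points need some care. The first is strictness of the decrease. Since $P''(y)<0$ for $y>0$, the derivative $\phi'(x)$ is strictly negative for $x>0$. The second, which I expect to be the main technical obstacle, is justifying the differentiation and handling the endpoint $x=0$. Here $P''$ and $P'''$ extend continuously to $[0,\infty)$ because $W(z)\sim z$ as $z\to0$. For example, $P''(y)\to-\ell^2$ as $y\to0$. So these derivatives are bounded on compact sets, and dominated convergence applies.

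An alternative that avoids re-deriving the derivative signs is to invoke the general fact that divided differences $x\mapsto (P(x)-P(0))/x$ of a concave function are nonincreasing, and those of a function with convex derivative are convex. This is exactly the route by which \eqref{eq:mad-bvdhk-divided-difference} yielded monotonicity and convexity of $F$. I would cite that argument with base point $c=0$.
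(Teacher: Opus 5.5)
Your proof is correct, but it takes a different route from the paper.

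The paper computes directly. With $u=W(c\ell x)$ it rewrites the summand as $q_c(x)=\ell(1+u/2)e^{-u}$ and uses $u'(x)=u/(x(1+u))$ to obtain closed forms:
\[
q_c'(x)=-\tfrac{c\ell^2}{2}e^{-2u}<0,
\qquad
q_c''(x)=c\ell^2e^{-2u}\,\frac{u}{x(1+u)}>0.
\]
It then computes the limits of $q_c$, $q_c'$ and $q_c''$ as $x\downarrow0$ to extend the conclusion to the closed half-line.

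You instead write the rescaled summand as a divided difference, $\phi(x)=(P(x)-P(0))/x=\int_0^1P'(sx)\,\mathrm ds$. This is the analogue of \eqref{eq:mad-bvdhk-divided-difference} with base point $0$ in place of $c=a+2$ and $\ell$ in place of $\lambda$. You then reuse the signs of $P''$ and $P'''$ from \eqref{eq:mad-bvdhk-P-derivatives}; you rightly note that these signs do not depend on the restriction on $\lambda$.

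The comparison is as follows. The paper's route is self-contained and gives explicit derivative formulas. Yours needs no new calculus and makes visible that this proposition and the monotonicity and convexity of $F$ in the proof of \cref{thm:IPSparseNbhdsBvdHK} are the same fact.

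You can also drop the step you flag as the main technical obstacle. $P'$ is continuous on $[0,\infty)$, strictly decreasing there (since $P''<0$ on $(0,\infty)$), and convex there (since $P'''\ge0$). Hence each map $x\mapsto P'(sx)$ with $s\in(0,1]$ is strictly decreasing and convex on $[0,\infty)$. Their average $\phi$ therefore inherits both properties on the closed half-line directly. This needs no differentiation under the integral sign and no separate treatment of the endpoint $x=0$.
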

\begin{proof}
Set \(\ell=\log(1+\lambda)\), and for \(c>0\) define
\[
 q_c(x)=\frac{\mathcal A(W(c\ell x))}{cx}
 \qquad(x>0).
\]
With \(u=W(c\ell x)\), we have \(ue^u=c\ell x\), and hence
\[
 q_c(x)=\ell\left(1+\frac u2\right)e^{-u},
 \qquad
 u'(x)=\frac{u}{x(1+u)}.
\]
It follows that
\[
 q_c'(x)=-\frac{c\ell^2}{2}e^{-2u}<0,
 \qquad
 q_c''(x)=c\ell^2e^{-2u}\frac{u}{x(1+u)}>0.
\]
As \(x\downarrow0\), we have \(u\downarrow0\) and \(u/x\to c\ell\), so
\[
 q_c(x)\longrightarrow\ell,
 \qquad
 q_c'(x)\longrightarrow-\frac{c\ell^2}{2},
 \qquad
 q_c''(x)\longrightarrow c^2\ell^3.
\]
Thus the continuous extension \(q_c(0)=\ell\) is decreasing and convex on \([0,\infty)\).
Taking \(c=(1+\lambda)^a\) or \(c=K(r)\) gives the result. 
\end{proof}

We record a fact used in the independence polynomial proofs.

\begin{proposition}\label{prop:variational}
For all \(a,b>0\),
\begin{equation}\label{eq:two-term-log-sum}
 \log(a+b)=\max_{p\in[0,1]}
 \left\{h(p)+(1-p)\log a+p\log b\right\}.
\end{equation}
The unique maximizer is \(p=b/(a+b)\).
In particular, taking \(a=Z_{G-v}(\lambda)\) and
\(b=\lambda_v Z_{G-N_G[v]}(\lambda)\) in~\eqref{eq:two-term-log-sum}
and using~\eqref{eq:intro-deletion} gives~\eqref{eq:intro-log-sum}.
\end{proposition}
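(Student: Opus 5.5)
The plan is a direct Gibbs-variational argument on the two-point set. Fix \(a,b>0\) and set \(\Phi(p)=h(p)+(1-p)\log a+p\log b\) on \([0,1]\). I will show that \(\Phi(p)\le\log(a+b)\) for every \(p\), with equality exactly at \(p=b/(a+b)\).

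Write \(q=b/(a+b)\in(0,1)\), so that \(1-q=a/(a+b)\). Then
\[
 \log a=\log(a+b)+\log(1-q),\qquad \log b=\log(a+b)+\log q.
\]
Substituting these gives
\[
 \Phi(p)=\log(a+b)-\bigl[p\log\tfrac pq+(1-p)\log\tfrac{1-p}{1-q}\bigr]
 =\log(a+b)-\dkl{\operatorname{Bern}(p)}{\operatorname{Bern}(q)},
\]
with the usual conventions \(0\log 0=0\) at the endpoints \(p\in\{0,1\}\). These conventions are consistent with \(h(0)=h(1)=0\), and the endpoint values are finite because \(a,b>0\).

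Nonnegativity of the Kullback--Leibler divergence gives \(\Phi(p)\le\log(a+b)\). Equality holds if and only if \(\operatorname{Bern}(p)=\operatorname{Bern}(q)\), that is, \(p=q\). This establishes both the maximum and the uniqueness of the maximizer. As an alternative check, \(\Phi\) is strictly concave on \((0,1)\) with \(\Phi'(p)=\log\frac{1-p}{p}+\log\frac ba\), which vanishes exactly at \(p=q\). The endpoint values \(\log a\) and \(\log b\) are both strictly less than \(\log(a+b)\), so the maximum cannot occur at an endpoint.

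For the ``in particular'' statement, \(Z_{G-v}(\lambda)\ge1\) and \(\lambda_vZ_{G-N_G[v]}(\lambda)\ge\lambda_v>0\) for positive fugacities, so both quantities are positive. Setting \(a=Z_{G-v}(\lambda)\) and \(b=\lambda_vZ_{G-N_G[v]}(\lambda)\), the deletion identity~\eqref{eq:intro-deletion} gives \(a+b=Z_G(\lambda)\). Expanding \(p\log b=p\log\lambda_v+p\log Z_{G-N_G[v]}(\lambda)\) then turns~\eqref{eq:two-term-log-sum} into~\eqref{eq:intro-log-sum}.

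There is no real obstacle here. The only point needing care is the endpoint convention together with positivity of \(a\) and \(b\), so that every logarithm is finite.
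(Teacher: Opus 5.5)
Your proof is correct, but your main argument differs from the paper's. The paper argues purely by calculus: it computes $\phi'(p)=\log\frac{(1-p)b}{pa}$ and $\phi''(p)=-1/(p(1-p))<0$, concludes that the unique critical point $p=b/(a+b)$ is the unique maximizer on $[0,1]$, and substitutes to obtain $\log(a+b)$. That is exactly your ``alternative check.''

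Your primary route instead rewrites the objective exactly as $\log(a+b)-\dkl{\operatorname{Bern}(p)}{\operatorname{Bern}(q)}$ with $q=b/(a+b)$. The maximum value, the location of the maximizer, and its uniqueness then all follow at once from Gibbs' inequality. The endpoints are handled uniformly by the $0\log 0$ convention rather than by a separate concavity and continuity argument.

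What your approach buys is an exact formula for the deficit. It is the two-point case of the Gibbs variational principle stated in the introduction. With $a=1$ and $b=\lambda$ it is precisely the identity $\ell-F_\lambda(s)=\dkl{\operatorname{Bern}(p)}{\operatorname{Bern}(p_*)}$ that the paper uses in the proof of \cref{lem:partition-support}, so your route ties the two facts together. The paper's calculus proof is more self-contained, since it needs no properties of the divergence.

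Your explicit check that $a=Z_{G-v}(\lambda)\ge 1$ and $b\ge\lambda_v>0$ is a small point that the paper leaves implicit.
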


\begin{proof}
Set
\[
 \phi(p)=h(p)+(1-p)\log a+p\log b.
\]
For \(p\in(0,1)\),
\[
 \phi'(p)=\log\frac{(1-p)b}{pa},
 \qquad
 \phi''(p)=-\frac1{p(1-p)}<0.
\]
Thus \(\phi\) is strictly concave, and its unique critical point
\(p=b/(a+b)\) is its unique maximum on \([0,1]\).
Substitution at this point gives \(\phi(p)=\log(a+b)\), proving the identity.
\end{proof}

The local demand theorems also give an elementary independence polynomial bound.
Fix \(\lambda>0\), set \(\ell=\log(1+\lambda)\), and let \(G\) be triangle-free.
Applying \cref{thm:LDRColNbdhs} with \(r=1\) and uniform vertex weights gives, by the probabilistic method, that some \(I_0\in\cI(G)\) satisfies
\[
 |I_0|\ge\sum_{v\in V}f(d(v)).
\]
Every subset of \(I_0\) is independent, so
\[
 \log Z_G(\lambda)
 \ge\ell\sum_{v\in V}f(d(v)).
\]

For a \(d\)-regular triangle-free graph, this gives
\[
 \log Z_G(\lambda)\ge |V|\ell f(d)
 =(1+o(1))\frac{|V|\ell\log d}{d},
\]
whereas \cref{thm:IPRColDegrees} gives exactly
\[
 \log Z_G(\lambda)
 \ge\frac{|V|}{d}\mathcal A(W(d\ell))
 =\left(\frac12+o(1)\right)
   \frac{|V|[\log(d\ell)]^2}{d}
\]
for fixed \(\lambda>0\) as \(d\to\infty\).
The ratio between these two lower-bound expressions is
\[
 (1+o(1))\frac{[\log(d\ell)]^2}{2\ell\log d}
 \sim\frac{\log d}{2\ell}.
\]
Thus the direct entropy-optimized induction gains a factor of order \(\log d\) over the elementary consequence of the local demand result.

\section{Random processes}\label[appendix]{app:random_processes}

A useful way to think about the local demand proofs is through a \emph{piecewise deterministic Markov process} (PDMP).
We give only an informal picture here; see~\cite{Davis84} for the general theory.
Moreover, we only highlight the process for the sparse neighborhoods result, leaving out the adaptations for fractionally colorable neighborhoods.
Given a graph $G=(V,E)$, a state $X=(I,R,w)$ consists of an independent set $I\in\cI(G)$, a set $R\subseteq V$ of pending vertices, and positive weights $w\in\RRpos^R$, with $N_G[I]\cap R=\emptyset$.
The vector $w$ plays two roles: $w(x)$ is the instantaneous clock rate of a pending vertex $x$, and the same weights define the local degrees in $G[R]$.
Write $X_t=(I_t,R_t,w_t)$ for the evolving state.
The starting state is $X_0 = (\emptyset,V,w_0)$, where $w_0$ is the initial weight vector.
In state $X$, when the clock at some $x$ rings, a prescribed response produces a possibly random post-response state $X^x$.
Between jumps, the rates follow an ordinary differential equation of the form
\begin{equation}\label{eq:pedagogical-weight-flow}
 \frac{\mathrm d}{\mathrm dt}\log w_t(x)=\Lambda_x(X_t)
 \qquad(x\in R_t),
\end{equation}
where the logarithmic drift $\Lambda_x$ is part of the construction.

Fix a decreasing demand function $g$, differentiable where needed below, and arbitrary nonnegative dual weights $u\in\RRnn^V$.
We consider the potential
\begin{equation}\label{eq:pedagogical-local-potential}
 \Phi_u(I,R,w)=
 \sum_{v\in I}u(v)
 +\sum_{v\in R}u(v)g\bigl(d_{G[R],w}(v)\bigr).
\end{equation}
At the initial state $(\emptyset,V,w_0)$ this is $\sum_{v\in V}u(v)g(d_{G,w_0}(v))$, while at an absorbing state, where $R=\emptyset$, it is the total dual weight $\sum_{v\in I_\infty}u(v)$.
The formal generator is
\begin{equation}\label{eq:pedagogical-generator}
 \mathcal L\Phi_u
 =\sum_{x\in R}w(x)
   \bigl(\EE[\Phi_u(X^x)\mid X]-\Phi_u(X)\bigr)
 +\sum_{x\in R}\frac{\mathrm d w_t(x)}{\mathrm dt}
   \frac{\partial\Phi_u}{\partial w(x)}.
\end{equation}
Thus, responses and a drift for which $\mathcal L\Phi_u\ge0$ make $\Phi_u(X_t)$ formally a submartingale.
At the absorbing time this would give
\[
 \EE\sum_{v\in I_\infty}u(v)
 \ge \sum_{v\in V}u(v)g(d_{G,w_0}(v)).
\]
If this can be arranged for every $u\in\RRnn^V$, then a standard linear-programming duality argument gives a distribution on independent sets with the desired marginals.

For triangle-free graphs with \(g=f=f_1\) and responses that simply select the vertex whose clock rang, one can \emph{derive} the correct logarithmic drift by analyzing the generator and using convexity in a manner analogous to Shearer's original proof~\cite{She83}.
The following calculation shows how the differential equation satisfied by \(f\) dictates the drift.

Write \(H=G[R]\), with all neighborhoods in the calculation taken in \(H\).
Take the response to a ring at \(x\) to select \(x\), remove \(N_H[x]\), and leave the weights of the surviving vertices unchanged.
By linearity, it suffices to show that the coefficient of \(u(v)\) in \(\mathcal L\Phi_u\) is nonnegative for every \(v\).
This coefficient is constant if \(v\) has already been selected or discarded, so fix \(v\in R\) and write
\[
 d_w(v)=d_{H,w}(v)=\frac{w(N_H(v))}{w(v)}.
\]
If \(d_w(v)=0\), then \(f(d_w(v))=1\), a ring at \(v\) leaves the coefficient equal to one, and no other clock or weight flow affects it.
Thus the coefficient has generator zero.
We may therefore assume that \(d_w(v)>0\).

For \(x\in R\setminus N_H[v]\), write
\[
 c_{x,v}=\frac{w(N_H(v)\cap N_H(x))}{w(v)}.
\]
A ring at \(x\) leaves \(v\) pending and decreases its weighted degree from \(d_w(v)\) to \(d_w(v)-c_{x,v}\).
A ring at \(v\) selects \(v\), while a ring in \(N_H(v)\) discards it.
Hence the jump contribution to the coefficient of \(u(v)\) is
\begin{align*}
 {}&w(v)\bigl[1-f(d_w(v))\bigr]-w(N_H(v))f(d_w(v))
 +\sum_{x\in R\setminus N_H[v]}w(x)
    \bigl[f(d_w(v)-c_{x,v})-f(d_w(v))\bigr]\\
 ={}&w(v)\bigl[1-(1+d_w(v))f(d_w(v))\bigr]
 +\sum_{x\in R\setminus N_H[v]}w(x)
    \bigl[f(d_w(v)-c_{x,v})-f(d_w(v))\bigr].
\end{align*}
Convexity gives
\[
 f(d_w(v)-c_{x,v})-f(d_w(v))
 \ge-c_{x,v}f'(d_w(v)).
\]
Moreover, since \(H\) is triangle-free, the only vertex of \(N_H(y)\) that lies in \(N_H[v]\) is \(v\), for every \(y\in N_H(v)\).
Reversing the order of summation therefore gives
\begin{align*}
 \sum_{x\in R\setminus N_H[v]}w(x)c_{x,v}
 &=\frac1{w(v)}
   \sum_{y\in N_H(v)}w(y)\bigl(w(N_H(y))-w(v)\bigr)\\
 &=\frac1{w(v)}
   \sum_{y\in N_H(v)}w(y)w(N_H(y))-w(N_H(v)).
\end{align*}
Consequently the jump contribution is at least
\begin{equation}\label{eq:pedagogical-jump-bound}
 w(v)\bigl[1-(1+d_w(v))f(d_w(v))\bigr]
 -f'(d_w(v))\left[
   \frac1{w(v)}\sum_{y\in N_H(v)}w(y)w(N_H(y))
   -w(N_H(v))
 \right].
\end{equation}

Apply the differential equation~\eqref{eq:fm-ode} with \(m=1\) immediately to~\eqref{eq:pedagogical-jump-bound}.
Since
\[
 1-(1+d_w(v))f(d_w(v))
 =d_w(v)(d_w(v)-1)f'(d_w(v))
 \qquad\text{and}\qquad
 w(N_H(v))=w(v)d_w(v),
\]
the jump contribution is at least
\[
 -f'(d_w(v))\left[
   \frac1{w(v)}\sum_{y\in N_H(v)}w(y)w(N_H(y))
   -d_w(v)w(N_H(v))
 \right],
\]
We choose the continuous flow so that its contribution cancels this lower-bound term.
For an arbitrary logarithmic drift \(\Lambda\), differentiation gives
\[
 \frac{\mathrm d}{\mathrm dt}d_w(v)
 =\frac1{w(v)}\sum_{y\in N_H(v)}w(y)\Lambda_y(X)
  -d_w(v)\Lambda_v(X).
\]
Thus the choice, for every $x\in R$, $\Lambda_x(X)=w(N_H(x))$
makes \(\frac{\mathrm d}{\mathrm dt}d_w(v)\) equal to the quantity in brackets in the jump bound.
The flow contribution is \(f'(d_w(v))\frac{\mathrm d}{\mathrm dt}d_w(v)\), so it exactly cancels the displayed lower-bound term.
The coefficient of every \(u(v)\) in \(\mathcal L\Phi_u\) is therefore nonnegative.
Since the dual weights are nonnegative, it follows that \(\mathcal L\Phi_u\ge0\).

For graphs with sparse neighborhoods, the same calculation produces additional error terms arising from triangles.
\Cref{lem:sparse-obstacle-data} uses the weighted Laplacian to construct a correction to the logarithmic drift whose contribution dominates those terms.

Turning the formal generator argument into a proof would also require a discussion of nonexplosion, integrability, and a justified passage to the absorbing time.
These issues add technical details but do not change the core idea.
They appear to be manageable by constructing the process step by step through an induction, although that approach appears to defeat the main advantage of the PDMP formulation.

A benefit of the PDMP perspective is that it provides a clear basis for an efficient sampling algorithm for the distributions in $\cI(G)$ guaranteed by the theorem. This connection to a randomized greedy algorithm was highlighted by Shearer in his original work~\cite{She83}, and we feel it interesting to attempt to preserve this aspect of the method.

\printbibliography

\end{document}